\documentclass{amsart}

\usepackage[utf8]{inputenc}
\usepackage{amsmath, amsthm, amssymb, mathtools, empheq, cite}
\usepackage{tikz, color}
\usepackage[colorlinks=true, linkcolor=black, citecolor=black, filecolor=black, urlcolor=black, pdfborder={0 0 0}, linktoc=all]{hyperref}

\usepackage{todonotes}
\usepackage{cleveref}
\usepackage{caption}
\usepackage{subcaption}
\usetikzlibrary{calc, math}
\usepackage{diagbox}
\usetikzlibrary{intersections}

\makeatletter

\newtheorem{theorem}{Theorem}[section]
\newtheorem{corollary}[theorem]{Corollary}
\newtheorem{proposition}[theorem]{Proposition}
\newtheorem{lemma}[theorem]{Lemma}

\theoremstyle{definition}
\newtheorem{definition}[theorem]{Definition}
\newtheorem{example}[theorem]{Example}
\newtheorem{question}[theorem]{Question}
\newtheorem{remark}[theorem]{Remark}

\newtheorem{observation}[theorem]{Observation}
\newtheorem{convention}[theorem]{Convention}




\definecolor{blue}{rgb}{0, 0.445, 0.695}
\definecolor{bluishgreen}{rgb}{0, 0.626, 0.456}
\definecolor{red}{rgb}{0.896, 0.395, 0}
\definecolor{purple}{rgb}{0.783, 0.464, 0.640}
\definecolor{skyblue}{rgb}{0.359, 0.752, 0.973}
\definecolor{orange}{rgb}{0.999, 0.706, 0.0}
\definecolor{yellow}{rgb}{0.937, 0.890, 0.258}
\definecolor{aa}{RGB}{0,191,255}

\title{Geometric realizations of $\nu$-associahedra via brick polyhedra}
\author[C.~Ceballos]{Cesar Ceballos}
\address[C.~Ceballos]{Institute of Geometry, University of Technology Graz, Austria}
\email{cesar.ceballos@tugraz.at}

\author[M.~M\"uller]{Matthias M\"uller}
\address[M.~M\"uller]{Institute of Geometry, University of Technology Graz, Austria}
\email{matthias.mueller@tugraz.at}

\thanks{Cesar Ceballos was supported by the Austrian Science Fund FWF, grants P 33278 and I 5788.}

\usepackage[x11names, table]{xcolor}
\usepackage{graphicx}
\usepackage{pdfpages}

\usepackage{pgf}
\usepackage{pgfplots}
\pgfplotsset{compat=1.8}
\usepackage{mathtools}
\usepackage{tikz}
\usetikzlibrary{arrows}	
\usetikzlibrary{patterns}

\usepackage[framemethod=TikZ]{mdframed}		
\usetikzlibrary{calc}
\usepackage{pgfplots}
\usepgfplotslibrary{groupplots}
\usepackage{tikz-cd}

\usepackage{amsmath,amssymb}
\usepackage{mathpazo}
\usepackage{ifthen}
\usepackage{tikz,pgf,pgfmath}
	\usetikzlibrary{shapes,calc,decorations.pathmorphing}
	\pgfdeclarelayer{background}
	\pgfdeclarelayer{middle}
	\pgfsetlayers{background,middle,main}
	\tikzstyle{fnode}=[fill=black,draw=black,circle,scale=\s]
	\tikzstyle{back}=[thin,dashed]
	\tikzstyle{edge}=[line width=.75pt]
	\newcommand{\red}{red!50!white}
	\newcommand{\blue}{blue!50!white}

	\newcommand{\green}{green!50!gray}
	\newcommand{\gray}{white!50!gray}
	\tikzstyle{facet}=[fill=\green,fill opacity=0.6]
	\tikzstyle{facetR}=[fill=\red,fill opacity=0.8]
	\tikzstyle{facetL}=[fill=\blue,fill opacity=0.8]
	\tikzstyle{pathnode}=[inner sep=.9pt]

\newcounter{i}
\newcounter{n}
\newcounter{m}
\newcounter{curx}
\newcounter{cury}
\newcounter{orient}
\newcommand{\nuTree}[7]{
	\begin{tikzpicture}
		\def\dx{#5};
		\def\s{3*\dx};
		\setcounter{n}{0}
		\setcounter{m}{0}
		\setcounter{orient}{0}
		\setcounter{curx}{0}
		\setcounter{cury}{0}
		\foreach \step in {#1}{
			\ifthenelse{\equal{\theorient}{0}}{
				\addtocounter{n}{\step}
				\setcounter{orient}{1}
			}{
				\addtocounter{m}{\step}
				\setcounter{orient}{0}
			}
		}
		\setcounter{i}{0}
		\setcounter{orient}{0}
		\foreach \step in {#1}{
			\ifthenelse{\equal{\theorient}{0}}{
				\coordinate(q\thei) at (\thecurx*\dx,\thecury*\dx);
				\stepcounter{i}
				\coordinate(q\thei) at (\thecurx*\dx,{(\thecury+\step)*\dx});
				\stepcounter{i}
				\draw[\gray,dashed](\thecurx*\dx,\thecury*\dx) -- (\thecurx*\dx,{(\thecury+\step)*\dx});
				\begin{pgfonlayer}{background}
					\foreach \k in {1,...,\step}{
						\draw[\gray,dashed](0*\dx,{(\thecury+\k)*\dx}) -- (\thecurx*\dx,{(\thecury+\k)*\dx});
					}
				\end{pgfonlayer}
				\addtocounter{cury}{\step}
				\setcounter{orient}{1}
			}{
				\draw[\gray,dashed](\thecurx*\dx,\thecury*\dx) -- ({(\thecurx+\step)*\dx},\thecury*\dx);
				\begin{pgfonlayer}{background}
					\foreach \k in {0,...,\step}{
						\ifthenelse{\equal{\k}{\step}}{
						}{
							\draw[\gray,dashed]({(\thecurx+\k)*\dx},\thecury*\dx) -- ({(\thecurx+\k)*\dx},\then*\dx);
						}
					}
				\end{pgfonlayer}
				\addtocounter{curx}{\step}
				\setcounter{orient}{0}
			}
		}
		\ifthenelse{\equal{#7}{}}{}{
			\coordinate(zero) at (0*\dx,0*\dx);
			\coordinate(q\thei) at(0*\dx,\then*\dx);
			\addtocounter{i}{-1}
			\foreach \k in {0,2,...,\thei}{
				\pgfmathparse{\k+1}
				\let\res\pgfmathresult
				\begin{pgfonlayer}{middle}
					\fill[#7] (zero |- q\k) -- (q\k) -- (q\res) -- (zero |- q\res) -- cycle;
				\end{pgfonlayer}
			}
		}
		\setcounter{i}{1}
		\foreach \a/\b/\c/\d in {#2}{
			\draw(\a*\dx,\b*\dx) node[fnode,draw=\c,fill=\c](p\thei){};
			\ifthenelse{\equal{\d}{d}}{
				\draw[orange!50!gray,decorate,decoration={snake,amplitude=.5,segment length=2.5}](p\thei) -- (\a*\dx,{(\then+.5)*\dx});
			}{
			}
			\stepcounter{i}
		}
		\foreach \a/\b in {#3}{
			\draw[line width=3*\dx pt](\a) -- (\b);
		}
 		\foreach \a/\b/\c in {#4}{
			\draw[\c,line width=.5pt](\a*\dx,\b*\dx) circle(.25*\dx);
 		}
		\foreach \a/\b/\c/\d in {#6}{
			\draw[\d](\a*\dx,\b*\dx) node[scale=.75,anchor=west]{\c};
		}
	\end{tikzpicture}
}

\newcommand{\ccross}[1][black]{\raisebox{-.15cm}{\includegraphics[scale=.9]{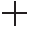}}}
\newcommand{\celbow}[1][black]{\raisebox{-.15cm}{\includegraphics[scale=.9]{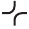}}}
\definecolor{red}{rgb}{0.9, 0.1, 0.1}


\subjclass[2020]{20F55, 52B11, 06A07}

\begin{document}


\begin{abstract}
Brick polytopes constitute a remarkable family of polytopes associated to the spherical subword complexes of Knutson and Miller. They were introduced for finite Coxeter groups by Pilaud and Stump, who used them to produce geometric realizations of generalized associahedra arising from the theory of cluster algebras of finite types. In this paper, we present an application of the vast generalization of brick polyhedra for general subword complexes (not necessarily spherical) recently introduced by Jahn and Stump. 

More precisely, we show that the $\nu$-associahedron, a polytopal complex whose edge graph is the Hasse diagram of the $\nu$-Tamari lattice introduced by Préville-Ratelle and Viennot, can be geometrically realized as the complex of bounded faces of the brick polyhedron of a well chosen subword complex. We also present a suitable projection to the appropriate dimension, which leads to an elegant vertex-coordinate description.
\end{abstract}



\maketitle

\tableofcontents

\section{Introduction}
The purpose of this work is to present an application of brick polyhedra of general subword complexes to produce geometric realizations of $\nu$-associahedra.

There are several known connections between brick polytopes and generalizations of the associahedron. 
A main core for such connections is Knutson and Miller's theory of subword complexes. 
Subword complexes are certain simplicial complexes motivated by the study of Gr\"obner geometry of Schubert varieties~\cite{KnutsonMiller_Groebner_2005, knutson_miller}.  
One of the first connections between subword complexes and associahedra was discovered by Pilaud and Pocchiola in~\cite{pilaud_multitriangulations_2012} using a slightly different terminology (of sorting networks), which was rediscovered using the subword complex terminology in~\cite{stump_newperspective_2011}.
A generalization for arbitrary finite Coxeter groups is due to Ceballos, Labb\'e and Stump in~\cite{ceballos_and_labbe}, who showed that $c$-cluster complexes arising in the theory of cluster algebras of finite type~\cite{clusteralgebrasII} can be obtained as well chosen subword complexes. 
The dual graph of the cluster complex, also known as the mutation graph for cluster algebras, is the edge graph of a well known polytope called the generalized associahedron~\cite{fomin_ysystems_2003,chapoton_polytopal_2002}. 

This last connection motivated the introduction of brick polytopes for spherical subword complexes by Pilaud and Stump~\cite{bpofssc}, who generalized the notion of brick polytopes in type $A$ by Pilaud and Santos in~\cite{bpofsn}. One of the main results in~\cite{bpofssc} provides a geometric realization of the generalized associahedron as the brick polytope of a spherical subword complex.
Later on, Jahn and Stump presented a generalization of brick polyhedra for arbitrary subword complexes (not necessarily spherical) of finite type~\cite{bpolyhedra}, who nicely connected them to the combinatorics and geometry of Bruhat intervals and Bruhat cones in Coxeter groups.  
Our work presents the first application of brick polyhedra to produce geometric realizations of $\nu$-associahedra.  
    \begin{figure}[!h]
    \centering
    \includegraphics[width=0.7\textwidth]{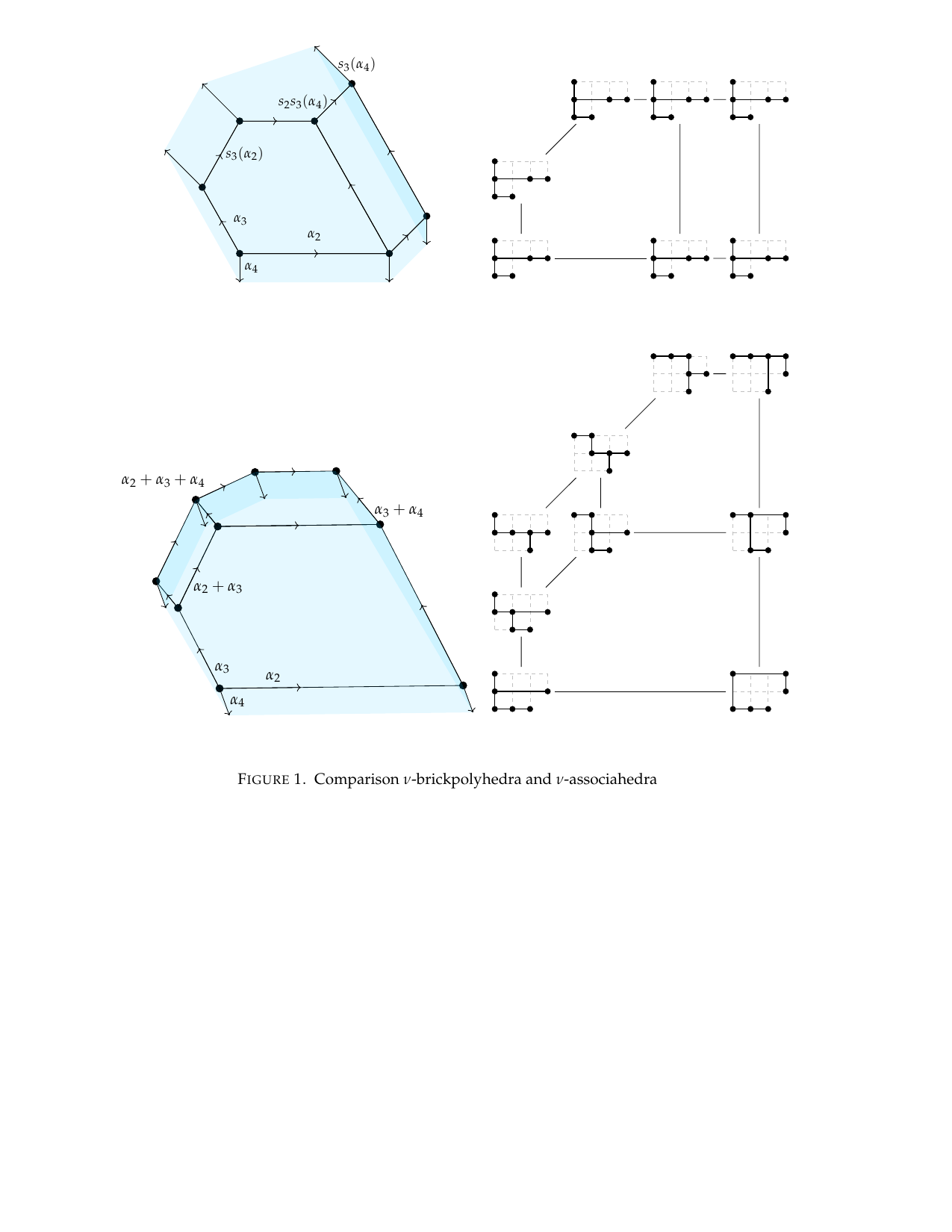} 
    \caption{Comparison of the $\nu$-brick polyhedron and  $\nu$-associahedron for $\nu=ENEEN$ (top) and for $\nu=EENEN$ (bottom).}
    \label{ENEENcomp_intro}
    \end{figure}
Given a lattice path $\nu$, consisting of finitely many north steps N and east steps~E, the \mbox{$\nu$-associahedron}~\cite{CPS19} is a polytopal complex whose edge graph is the Hasse diagram of the $\nu$-Tamari lattice introduced by Préville-Ratelle and Viennot in~\cite{preville_vTamari_2017}, and whose face poset is the poset of interior faces of the corresponding $\nu$-Tamari complex~\cite{CPS19} ordered by reverse inclusion. The case $\nu=(NE)^n$ recovers the classical associahedron, whose edge graph is the Hasse diagram of the classical Tamari lattice, which can be defined as the rotation poset of rooted binary trees and plays a fundamental role in many areas in mathematics, computer science and physics. 
The case $\nu=(NE^m)^n$ recovers the $m$-Tamari lattices of Bergeron~\cite{bergeron_higher_2012}, whose interval enumeration has beautiful conjectural connections to the theory of trivariate diagonal harmonics in representation theory. 
Using techniques from tropical geometry, Ceballos, Sarmiento and Padrol produced the first geometric realizations of $\nu$-associahedra~\cite{CPS19}, solving an open problem of Bergeron in this more general set up. 

In this paper, we present a second geometric realization of the $\nu$-associahedron as the complex of bounded faces of the brick polyhedron of a well chosen (non-spherical) subword complex. We also provide a suitable projection, in the special case where $\nu$ has non consecutive north steps, which provides a realization of the appropriate dimension, with a beautiful and elegant vertex-coordinate description. The brick polyhedron and the projection are illustrated for two examples in~\Cref{ENEENcomp_intro}. Some $3$-dimensional examples (resulting dimension after projecting) are illustrated in Figures~\ref{fig_ENENEENproj} and \ref{fig_nu_associahedra}. 

Brick polyhedra and $\nu$-associahedra are central structures in their respective fields. The contributions in this paper do not only extend the notorious connection between brick polytopes related to type $A$ cluster algebras and classical associahedra, but provides new insights for advances in both areas:
\begin{enumerate}
    \item \emph{Brick polyhedra}: 
    We provide the first known explicit combinatorial family of examples of brick polyhedra. The combinatorial understanding of $\nu$-Tamari lattices and $\nu$-associahedra is a powerful toolbox that provides intuition for a deeper understanding of the geometric structure of general brick polyhedra (see~\Cref{sec_faces_brick_polyhedra} and~\Cref{facesall}),
    motivating further questions for research in this area (see for instance~\Cref{question_faces_brick_polyhedra}).
    \item \emph{$\nu$-associahedra and generalizations}: 
    Our description of $\nu$-associahedra in terms of brick polyhedra opens new avenues of research in a much wider context, making this paper the starting point for various research directions. 
    A very promising direction of research is related to the family of framing lattices arising from triangulations of flow polytopes~\cite{bell_framing_2024}. This vast generalization captures many important lattices such as $\nu$-Tamari lattices, type~$A$ Cambrian lattices, Grassmann- and grid Tamari lattices, the $s$-weak order and  more, under the same umbrella.  
    The connections in this paper motivate the introduction of framing polyhedra~\cite{bell_framingtopes}, which play the role of brick polyhedra for framing lattices, and relate Bruhat cones to normal cones of flow polytopes. 
    We believe that framing polyhedra will be a fundamental tool in the study of framing lattices.
\end{enumerate}

\section{Brick polyhedra}
Throughout this work, we restrict our study to finite Coxeter groups, subword complexes and brick polyhedra of type $A$.

A Coxeter system \((W, S)\) of type \( A_n \) consists of the Coxeter group \( W := \mathcal{S}_{n+1} \) of permutations of $[n+1]$, which acts on the space \( \{ x \in \mathbb{R}^{n+1} \mid x_1 + \dots + x_{n+1} = 0 \} \) by permuting coordinates. 
It is finitely generated by simple transpositions \( S := \{ s_p \mid p \in [n] \} \) with \( s_p = (p, p + 1) \). The root system is defined by \( \Phi = \{ e_p - e_q \mid p \neq q \in [n+1] \} \), and can be partitioned into positive roots \( \Phi^+ = \{ e_i - e_j \mid 1 \leq i < j \leq n+1 \} \) and negative roots \( \Phi^- = \{ e_j - e_i \mid 1 \leq i < j \leq n+1 \} \). 
The simple roots are \( \Delta = \{ \alpha_p:=e_p - e_{p+1} \mid p \in [n] \} \) and the fundamental weights are \( \nabla = \{ \omega_p:=\sum_{q \leq p} e_q \mid p \in [n] \} \).

\begin{definition}[Subword complex \cite{knutson_miller}]
For a Coxeter system $(W,S)$, let $Q=(q_1,...,q_m)$ be a word in the generators $S$ of $W$ and let $w \in W$. The \emph{subword complex}~$\mathcal{SC}(Q,w)$ is the simplicial complex whose facets are subsets $I\subseteq [m]$ such that $Q_{[m]\setminus I}$ is a reduced expression for $w$. Here $Q_J$ denotes the subword of $Q$ with positions at $J$.
\end{definition}

We can now define two important functions associated with brick polyhedra.

\begin{definition}[Root and Weight Function \cite{ceballos_and_labbe,bpofssc}]
     Given a facet $I$ of~$\mathcal{SC}(Q,w)$, the \emph{root function} is the map $r$$(I,\cdot): [m] \xrightarrow{} \Phi$ defined by $r$$(I,k):=\prod Q_{ \{1,...,k-1\}\setminus I} (\alpha_{q_k})$. 
     We call ${R}(I) := \{\{ $$r$$(I,i)\mid i \in I\}\}$ 
     the \emph{root configuration} of $I$. The other function is the \emph{weight function} $\omega(I,\cdot): [m] \xrightarrow{} \Phi$, defined by $\omega(I,k):=\prod Q_{ \{1,...,k-1\}\setminus I} (\omega_{q_k})$.
\end{definition}

\begin{definition}[Bruhat~cone \cite{bpolyhedra}]\label{bpdrei_cone}
The \emph{Bruhat~cone} of a non-empty subword complex $\mathcal{SC}(Q,w)$ is defined by 
\[
\mathcal{C}^+(w,\operatorname{Dem}(Q)):=\operatorname{cone}\{ \beta \in \Phi^+ \mid w \prec_B s_\beta w \leq_B \operatorname{Dem}(Q)\},
\]
where $\operatorname{Dem}(Q)=\text{max}_{\leq_B} \{\prod Q_X \mid X \subseteq \{1,...,m\}\}$ denotes the Demazure product of~$Q$~\cite[Lemma 3.4 (1)]{knutson_miller}, and $\leq_B,\prec_B$ denote the Bruhat order and its cover relation.
\end{definition}

\begin{proposition}[{\cite{bpolyhedra}}]
The Bruhat cone can be computed as
\begin{align*}
\mathcal{C}^+(w,\operatorname{Dem}(Q)) =
\bigcap_{\text{Facet } J \in \mathcal{SC}(Q,w)} \operatorname{cone} ~R(J).
\end{align*}
\end{proposition}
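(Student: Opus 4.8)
The plan is to deduce the identity from the geometry of the brick polyhedron $\mathcal{B}:=\mathcal{B}(Q,w)$ itself, together with the elementary fact that the recession cone of a pointed polyhedron is the intersection of the tangent cones at its vertices. I would use three structural facts from the construction of $\mathcal{B}$: its vertices are exactly the brick (weight) vectors $b(I)$ indexed by the facets $I$ of $\mathcal{SC}(Q,w)$; the cone of feasible directions $T_{b(I)}:=\operatorname{cone}(\mathcal{B}-b(I))$ at the vertex $b(I)$ equals $\operatorname{cone}R(I)$, because the edges emanating from $b(I)$ are in bijection with the flips of $I$ and each has direction a positive multiple of the corresponding root $r(I,i)$, $i\in I$ (bounded edges pointing to neighbouring brick vectors, unbounded edges being recession rays); and the recession cone of $\mathcal{B}$ is the Bruhat cone $\mathcal{C}^+(w,\operatorname{Dem}(Q))$.

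Granting these, the statement becomes the general polyhedral identity $\operatorname{rec}(\mathcal{B})=\bigcap_I T_{b(I)}$, which I would prove by linear-programming duality. The inclusion $\operatorname{rec}(\mathcal{B})\subseteq\operatorname{cone}R(I)$ for each $I$ is immediate, since a recession direction is feasible from every point of $\mathcal{B}$, in particular from $b(I)$. For the reverse inclusion, let $d\in\bigcap_I\operatorname{cone}R(I)$ and let $\phi$ be any linear functional bounded above on $\mathcal{B}$, equivalently nonpositive on $\operatorname{rec}(\mathcal{B})$. Since $\mathcal{B}$ is pointed, $\phi$ attains its maximum at some vertex $b(I_\phi)$, so $\langle\phi,p-b(I_\phi)\rangle\le 0$ for all $p\in\mathcal{B}$, i.e.\ $\phi\le 0$ on $\operatorname{cone}R(I_\phi)=T_{b(I_\phi)}$; as $d\in\operatorname{cone}R(I_\phi)$ this gives $\langle\phi,d\rangle\le 0$. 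Since this holds for every such $\phi$, the dual description of the recession cone of a nonempty polyhedron yields $d\in\operatorname{rec}(\mathcal{B})=\mathcal{C}^+(w,\operatorname{Dem}(Q))$.

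The substance, and the main obstacle, lies entirely in the structural fact that $T_{b(I)}=\operatorname{cone}R(I)$. This requires the flip analysis of brick polyhedra: that each $i\in I$ produces exactly one edge direction $r(I,i)$, that these are precisely the edges and rays of $\mathcal{B}$ at $b(I)$ and generate the local cone, and that no extra supporting hyperplane cuts off part of $\operatorname{cone}R(I)$. One establishes this from the behaviour of the brick vectors under flips, namely that flipping $i\in I$ translates $b(I)$ by a positive multiple of $r(I,i)$, together with the identification of the recession rays; this is where the non-spherical case genuinely differs from the spherical one, since some of the $r(I,i)$ are negative roots and yield unbounded edges rather than neighbouring vertices.

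As an alternative that avoids quoting the full polyhedron, one can argue the two inclusions directly. For $\mathcal{C}^+\subseteq\operatorname{cone}R(I)$, one shows every generating root $\beta$ (with $w\prec_B s_\beta w\le_B\operatorname{Dem}(Q)$) lies in $\operatorname{cone}R(I)$: the positive roots among the $r(I,i)$ are themselves such generators, since inserting the letter $q_i$ into the reduced word $Q_{[m]\setminus I}$ gives $s_{r(I,i)}w\le_B\operatorname{Dem}(Q)$, and a reflection/lifting computation expresses an arbitrary generator as a nonnegative combination of the $r(I,i)$ (both the positive and the negative roots of $R(I)$ are needed). For the reverse inclusion one uses that the facets form a finite set and that flips move the brick vector along the roots $r(I,i)$: given $v\notin\mathcal{C}^+$, choose by Farkas a functional $\phi$ nonnegative on $\mathcal{C}^+$ with $\langle\phi,v\rangle<0$, and take a facet $I_{\min}$ minimizing $\langle\phi,b(\cdot)\rangle$; at such a minimum no flip decreases the functional, so $\langle\phi,r(I_{\min},i)\rangle\ge 0$ for all $i\in I_{\min}$, whence $\operatorname{cone}R(I_{\min})\subseteq\{\phi\ge 0\}$ and $v\notin\operatorname{cone}R(I_{\min})$. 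Even along this route the delicate step is the containment $\mathcal{C}^+\subseteq\operatorname{cone}R(I)$, precisely because the negative roots of the configuration are essential there.
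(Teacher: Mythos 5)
First, note that the paper does not prove this proposition at all: it is quoted verbatim from \cite{bpolyhedra}, so there is no internal proof to compare against, and your attempt must be judged on its own. Your first route has a genuine circularity. You deduce the identity from the local cone property $\operatorname{cone}(\mathcal{B}(Q,w)-b(I))=\operatorname{cone}R(I)$ together with $\operatorname{rec}(\mathcal{B}(Q,w))=\mathcal{C}^+(w,\operatorname{Dem}(Q))$. The second of these is immediate from Definition~\ref{bpdrei} (a polytope plus a cone has that cone as recession cone), but precisely for that reason the local cone property already \emph{contains} the inclusion $\mathcal{C}^+(w,\operatorname{Dem}(Q))\subseteq\operatorname{cone}R(I)$: since $b(I)+\mathcal{C}^+\subseteq\mathcal{B}(Q,w)$ by construction, you cannot have $\operatorname{cone}(\mathcal{B}(Q,w)-b(I))=\operatorname{cone}R(I)$ without first knowing the Bruhat cone sits inside every $\operatorname{cone}R(I)$. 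In \cite{bpolyhedra} the proposition is established combinatorially \emph{before} the brick polyhedron is introduced and is then used to prove the local cone property, so the logical order is the reverse of the one you propose.

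Your second route is the right shape but leaves the hard half unproved. The containment $\mathcal{C}^+(w,\operatorname{Dem}(Q))\subseteq\operatorname{cone}R(I)$ for every facet $I$ --- i.e.\ that each $\beta\in\Phi^+$ with $w\prec_B s_\beta w\leq_B\operatorname{Dem}(Q)$ is a nonnegative combination of the roots $r(I,i)$, $i\in I$ --- is exactly the substantive content of the proposition, and ``a reflection/lifting computation'' is an appeal to the result rather than an argument; you correctly flag it as the delicate step, but flagging it does not close it. The other inclusion also has a gap: at a facet $I_{\min}$ minimizing $\langle\phi,b(\cdot)\rangle$, the flip argument only yields $\langle\phi,r(I_{\min},i)\rangle\geq 0$ for \emph{flippable} $i\in I_{\min}$. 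In the non-spherical case there are letters contained in every facet (e.g.\ the letters $s_1,s_5$ for $\nu=ENEEN$ in Example~\ref{exENEENbp}); these admit no flip, so minimality gives no control over the sign of $\langle\phi,r(I_{\min},i)\rangle$ for them, and one needs a separate argument (in \cite{bpolyhedra} this is where the relation between such roots and Bruhat covers of $w$ below $\operatorname{Dem}(Q)$ enters). As it stands the proposal is a correct reduction of the statement to the two facts that constitute its proof in the literature, not a proof.
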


\begin{definition}[Brick polyhedron \cite{bpolyhedra}]\label{bpdrei}
The \emph{brick vector} of a facet $I \in \mathcal{SC}(Q,w)$~is 
\[
b(I):=-\sum_{k=1}^{m}\omega(I,k).
\]
The \emph{brick polyhedron}~$\mathcal{B}(Q,w)$ is the Minkowski sum of the convex hull of all brick vectors and the Bruhat~cone:
\begin{center}
$\mathcal{B}(Q,w):= \text{conv}\{b(I)\mid$ I facet of $\mathcal{SC}(Q,w)\}+\mathcal{C}^+(w,\operatorname{Dem}(Q))$.
\end{center}
\end{definition}

At first glance, brick polyhedra do not seem natural, but they turn out to have very nice properties related to the combinatorics and geometry of the corresponding subword complex~\cite{bpolyhedra}. 
We aim to relate this to the combinatorics and geometry of $\nu$-associahedra.

\section{The $\nu$-Tamari lattice and the $\nu$-associahedron}

We start by introducing the concept of $\nu$-Tamari lattices using the conventions in~\cite{ceballos_vTamari_subword_2020}.
We denote by $\nu$ a lattice path with finitely many east and north steps. Let $F_\nu$ be the Ferrers diagram weakly above $\nu$, inside the smallest rectangle containing $\nu$. We denote by $A_\nu$ the set of lattice points weakly above $\nu$, which are inside $F_\nu$. 
For a lattice point $p\in A_\nu$, we denote by $d(p)$ the lattice distance from $p$ to the top-left corner of $F_\nu$. 

\begin{definition}[$\nu$-tree~\cite{ceballos_vTamari_subword_2020}]
    For $ p, q \in A_\nu $, we say that $p$ and $q$ are $\nu$-incompatible, denoted $p \not\sim q$, if and only if $ p $ is southwest (SW) of $ q $ or $ p $ is northeast (NE) of $ q $, and the smallest rectangle containing $ p $ and $q $ lies completely inside $ F_\nu $. A \emph{$\nu$-tree} is a maximal collection of pairwise $\nu$-compatible elements in $A_\nu$. 
    Its elements are called \emph{nodes} and the top left corner is called \emph{root}.
    We associate a rooted binary tree to each $\nu$-tree~$T$ by connecting every $p\in T$ other than the root to the next in north or west direction, see Figure~\ref{nusc} (Left).
\end{definition}

\begin{definition}[$\nu$-Tamari lattice~\cite{ceballos_vTamari_subword_2020}]
 Two $\nu$-trees $T,T'$ are related by a \emph{right rotation} (or \emph{increasing flip}) if~$T'$ can be obtained from $T$ by exchanging $q\in T$ with~$q'\in T'$ as shown in Figure \ref{rotation} with $p,r\in T,T'$. The \emph{$\nu$-Tamari lattice} is the rotation poset of $\nu$-trees. An example of the Hasse diagram of the $\nu$-Tamari lattice for $\nu=ENEEN$ is the edge graph of Figure~\ref{ENEENasso}.
\end{definition}
    \begin{figure}[h]
    \centering
    \includegraphics[width=0.3\textwidth]{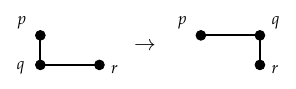} 
    \caption{Right rotation.}
    \label{rotation}
    \end{figure}
\begin{definition}[$\nu$-Tamari Complex \cite{ceballos_vTamari_subword_2020}]
    The \emph{$\nu$-Tamari complex} is the simplicial complex~$\mathcal{TC}(\nu$) of pairwise $\nu$-compatible sets in $A_\nu$. The dimension of a face $I$ is $\text{dim}(I)=|I|-1$. The facets are the $\nu$-trees.
\end{definition}
\begin{definition}[$\nu$-subword complex $\mathcal{SC}(Q_\nu,w_\nu)$ \cite{ceballos_vTamari_subword_2020}]\label{def3}
Given a lattice path $\nu$ we label each lattice point $p \in A_\nu$ by the transposition $s_{d(p)+1}$, see Figure~\ref{nusc} (Middle) for an example. Furthermore, define $Q_\nu$ as the word obtained by reading the associated transpositions from bottom to top, and the columns from left to right. The element~$w_\nu$ is the product of transpositions in the complement of a $\nu$-tree, see Figure~\ref{nusc} (Right). The complements of $\nu$-trees are reduced expressions of $w_\nu$ in $Q_\nu$ and the effect of a rotation keeps $w_\nu$ constant.
\end{definition}
\begin{figure}[h]
    \centering
    \includegraphics[width=0.8\textwidth]{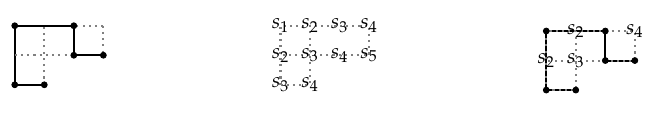} 
    \caption{Left: a $\nu$-tree for $\nu=ENEEN$. Middle: lattice points~$A_\nu$ labeled by transpositions; the corresponding word is $Q_\nu=(s_3,s_2,s_1,s_4,s_3,s_2,s_4,s_3,s_5,s_4)$. Right: complement of $\nu$-tree and its corresponding element $w_\nu=s_2s_3s_2s_4=[1,4,3,5,2,6]$.}
    \label{nusc}
\end{figure}

The reason why $w_\nu$ is independent of the choice of $T$ follows from the connection between $\nu$-trees and pipe dreams. 

\begin{definition}[Pipe dream \cite{lofap}, \cite{CPS19}]
A \emph{pipe dream} $P$ is a filling of a triangular shape with crosses~\ccross{} and elbows~\celbow{}, the lines are called \emph{pipes}. 
A pipe dream is called \emph{reduced} if every pair of pipes crosses at most once. 
We label the pipes entering on the left from top to bottom with the numbers from $1$ to $n$. The permutation $w(P)$ is the exiting permutation of pipes on the top of the figure, in one line notation. \Cref{pipe} shows an example of two reduced pipe dreams with exiting permutation $[1,4,3,5,2,6]$. 

\begin{figure}[h]
    \centering
    \includegraphics[width=0.6\textwidth]{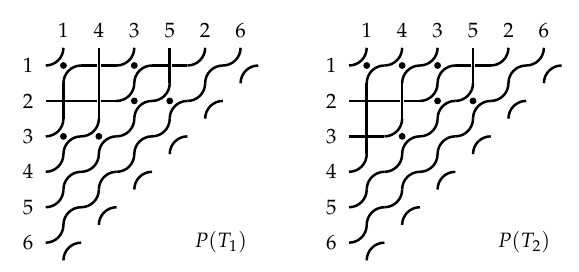}
    \caption{Pipe dreams for $\nu$-trees $T_1$ and $T_2$.}
    \label{pipe}
\end{figure}

To each $\nu$-tree $T$ we associate a pipe dream $P(T)$ by placing elbows at all nodes of the $\nu$-tree and outside~$F_\nu$. This is illustrated in Figure~\ref{pipe} for two $\nu$-trees related by a rotation.
Note that the exiting permutation remains unchanged, because the action of a tree rotation on pipe dreams exchanges one elbow between two pipes $i$ and $j$ with the unique crossing between pipes $i$ and $j$ (in our example, pipes 3 and~4).
\end{definition}

\begin{proposition}[{\cite{ceballos_vTamari_subword_2020}}]
The map sending $T\rightarrow P(T)$ is a bijection between $\nu$-trees and reduced pipe dreams with exiting permutation $w_\nu$.    
\end{proposition}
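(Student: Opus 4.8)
The plan is to factor the map $T\mapsto P(T)$ through the subword complex $\mathcal{SC}(Q_\nu,w_\nu)$ and reduce the statement to the classical dictionary between reduced pipe dreams and reduced subwords. Throughout I identify a $\nu$-tree $T$ with its node set inside $F_\nu$, so that placing elbows exactly on the nodes of $T$ (and outside $F_\nu$) while filling the remaining cells of $A_\nu$ with crosses amounts to selecting the complementary positions $A_\nu\setminus T$ to carry the crosses. Thus the cells carrying crosses spell the subword $Q_{A_\nu\setminus T}$ of $Q_\nu$, and the whole proposition becomes a statement about which such subwords are reduced.

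First I would show that $P(T)$ is a reduced pipe dream with exiting permutation $w_\nu$. The key observation is that reading the crosses of $P(T)$ along the pipes reproduces, letter by letter, the subword $Q_{A_\nu\setminus T}$: each cross in the cell labeled $s_{d(p)+1}$ swaps the two pipes meeting there, and the reading order is compatible with the order in which strands cross along a planar sweep. Under this dictionary the exiting permutation satisfies $w(P(T))=\prod Q_{A_\nu\setminus T}$ as a product in $W$ (this holds irrespective of reducedness, as for any wiring diagram), while $P(T)$ is reduced, i.e.\ no two pipes cross twice, precisely when $Q_{A_\nu\setminus T}$ is a reduced word. Since $T$ is a $\nu$-tree, \Cref{def3} guarantees that $Q_{A_\nu\setminus T}$ is a reduced expression for $w_\nu$; hence $P(T)$ is reduced with $w(P(T))=w_\nu$. (Independence of $w(P(T))$ from $T$ can alternatively be seen from the rotation invariance recorded after \Cref{def3}, together with the connectivity of the $\nu$-Tamari lattice.)

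Injectivity is immediate: the nodes of $T$ are recovered as the elbow positions of $P(T)$ lying inside $F_\nu$, so the map can be inverted on its image. For surjectivity I would start from an arbitrary reduced pipe dream $P$ with $w(P)=w_\nu$, let $T$ be the set of cells of $A_\nu$ carrying elbows, and run the same dictionary in reverse: the crosses of $P$ spell a reduced subword $Q_{A_\nu\setminus T}$ whose product is $w_\nu$, so $A_\nu\setminus T$ is a reduced expression for $w_\nu$ and $T$ is a facet of $\mathcal{SC}(Q_\nu,w_\nu)$. It then remains only to check that such a facet is exactly a $\nu$-tree, i.e.\ a maximal pairwise $\nu$-compatible collection; this is the identification of the facets of $\mathcal{SC}(Q_\nu,w_\nu)$ with the facets of the $\nu$-Tamari complex $\mathcal{TC}(\nu)$ from \Cref{def3} and \cite{ceballos_vTamari_subword_2020}, after which $P=P(T)$ and surjectivity follows.

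The main obstacle is the first step: making the pipe-dream/word dictionary precise enough that ``reduced pipe dream'' and ``reduced subword of $Q_\nu$'' become literally the same condition. This requires matching conventions carefully — the shape and orientation of the triangular region, the bottom-to-top and left-to-right reading that defines $Q_\nu$, the labeling of cells by $s_{d(p)+1}$, and the convention that elbows rather than crosses sit on the nodes — and verifying that as one sweeps along the reading order each cross acts on the running permutation by the corresponding adjacent transposition, so that the product along the word equals $w(P)$, and two pipes cross twice exactly when the word fails to be reduced. Once this correspondence is nailed down, the three steps above assemble into the claimed bijection with essentially no further computation.
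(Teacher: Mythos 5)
The paper does not prove this proposition: it is quoted verbatim from \cite{ceballos_vTamari_subword_2020}, so there is no in-paper argument to compare yours against. Judged on its own, your reduction to the subword-complex dictionary is the natural one and the individual steps are standard, but there is a logical-order problem you should be aware of. Immediately after this proposition the paper states the theorem that $\mathcal{SC}(Q_\nu,w_\nu)$ is isomorphic to the $\nu$-Tamari complex, explicitly flagging that ``this connection'' (i.e.\ the pipe-dream bijection) is what \emph{allows} one to establish that isomorphism. Your surjectivity step runs in the opposite direction: you take a reduced pipe dream $P$ with $w(P)=w_\nu$, observe that its elbow set $T$ is a facet of $\mathcal{SC}(Q_\nu,w_\nu)$, and then invoke the identification of such facets with maximal $\nu$-compatible sets to conclude $T$ is a $\nu$-tree. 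That identification is precisely the theorem the proposition is meant to feed into, and it is the only genuinely nontrivial combinatorial content here; deferring it to the same citation the proposition already carries makes the argument circular relative to the intended development (and close to tautological relative to the source).

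The parts you do carry out yourself are fine. The wiring-diagram dictionary --- that the exiting permutation of any pipe dream equals the ordered product of the simple transpositions at its crossing cells, independently of reducedness, and that ``no pair of pipes crosses twice'' is equivalent to reducedness of the corresponding subword --- is correct, provided one checks that the column-by-column, bottom-to-top reading order defining $Q_\nu$ is a valid sweep order (letters whose order is exchanged relative to a genuine time-sweep lie on non-adjacent antidiagonals and hence commute); you rightly flag this as the point where conventions must be matched. Well-definedness and injectivity are also fine, since \Cref{def3} already asserts that complements of $\nu$-trees are reduced expressions for $w_\nu$. To make the proof non-circular you would need a direct argument for surjectivity: starting from a reduced pipe dream for $w_\nu$, show that its elbows inside $F_\nu$ form a pairwise $\nu$-compatible set (two elbows in $\nu$-incompatible position force a double crossing of the two pipes through them) and that this set is maximal (a cross that could be turned into an elbow without creating a double crossing would change the exiting permutation, contradicting $w(P)=w_\nu$ together with a length count). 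That is the argument the cited reference actually supplies, and it is absent from your proposal.
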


This connection allows us to provide a nice description of the $\nu$-Tamari complex as a well chosen subword complex. 

\begin{theorem}[{\cite{ceballos_vTamari_subword_2020}}]
    The $\nu$-subword complex $\mathcal{SC}(Q_\nu, w_\nu)$ is isomorphic to the $\nu$-Tamari complex. 
\end{theorem}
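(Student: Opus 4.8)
The plan is to build the isomorphism from the natural bijection of ground sets and then reduce everything to a correspondence of facets, after which the full statement follows formally. Let $m = |A_\nu|$ be the length of $Q_\nu$. By construction each position $k \in [m]$ of $Q_\nu$ is the letter $s_{d(p)+1}$ contributed by a unique lattice point $p \in A_\nu$ (reading columns left to right, each from bottom to top), so setting $\phi(p) := k$ defines a bijection $\phi \colon A_\nu \to [m]$. Since any simplicial complex is the downward closure of its set of facets, and since $\phi$, being a bijection of vertex sets, automatically preserves inclusions, it suffices to prove that $S \subseteq A_\nu$ is a $\nu$-tree if and only if $\phi(S)$ is a facet of $\mathcal{SC}(Q_\nu, w_\nu)$; the face correspondence $F \mapsto \phi(F)$ then drops out, because $F$ is a face of $\mathcal{TC}(\nu)$ iff $F$ lies in some $\nu$-tree $T$, iff $\phi(F) \subseteq \phi(T)$ is contained in a facet of $\mathcal{SC}(Q_\nu, w_\nu)$.

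I would next translate facets into the language of pipe dreams. By definition, $\phi(S)$ is a facet exactly when the subword $Q_\nu$ restricted to the complementary positions $[m]\setminus \phi(S)$ is a reduced expression for $w_\nu$. To a subset $S \subseteq A_\nu$ I associate the filling $P_S$ that places elbows at the cells of $S$ and at all cells outside $F_\nu$, and crosses at the remaining cells of $F_\nu$; note that when $S=T$ is a $\nu$-tree this is precisely the pipe dream $P(T)$, and conversely $P_S = P(T)$ forces $S = T$ since the elbows inside $F_\nu$ sit exactly at the nodes. The key lemma, and the main obstacle, is the pipe-dream dictionary: reading the crosses of $P_S$ in the order in which $Q_\nu$ reads its letters yields exactly the subword on $[m]\setminus\phi(S)$, the product of this subword equals the exiting permutation $w(P_S)$, and $P_S$ is a reduced pipe dream if and only if this subword is reduced. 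Establishing this requires tracking which pipes occupy which wires as one sweeps through the grid in the reading order, and checking that a crossing at the cell $p$ swaps the two pipes meeting there via the transposition $s_{d(p)+1}$, consistently with the labelling; this bookkeeping is where the genuine work lies.

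With the dictionary in hand the argument closes quickly. For $S \subseteq A_\nu$ we have: $\phi(S)$ is a facet of $\mathcal{SC}(Q_\nu, w_\nu)$ $\iff$ $Q_\nu|_{[m]\setminus\phi(S)}$ is a reduced word for $w_\nu$ $\iff$ $P_S$ is a reduced pipe dream with exiting permutation $w_\nu$. By the Proposition, the reduced pipe dreams with exiting permutation $w_\nu$ are exactly the $P(T)$ for $\nu$-trees $T$, and by the previous paragraph $P_S$ is of this form iff $S$ itself is a $\nu$-tree. Hence $\phi(S)$ is a facet iff $S$ is a $\nu$-tree, which is the required facet bijection and therefore yields the isomorphism $\mathcal{SC}(Q_\nu, w_\nu) \cong \mathcal{TC}(\nu)$. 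I expect the verification of reducedness and of the exiting permutation in the pipe-dream dictionary to be the crux; the passage from the facet correspondence to the isomorphism of complexes is a formal consequence of a simplicial complex being determined by its facets.
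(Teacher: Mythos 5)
Your proposal is correct and follows essentially the same route the paper takes (via its cited source \cite{ceballos_vTamari_subword_2020}): reduce to a bijection of facets, and identify complements of $\nu$-trees with reduced pipe dreams, hence with reduced subwords for $w_\nu$ in $Q_\nu$, using the correspondence $T \mapsto P(T)$. The pipe-dream-to-subword dictionary you single out as the crux is the standard Knutson--Miller bookkeeping, which the paper likewise invokes without spelling out.
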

The interior faces of the $\nu$-Tamari complex can be characterized as follows.
\begin{definition}[\cite{ceballos_sweakorderII}]\label{ascent}
A node $q$ in a $\nu$-tree $T$ is called an ascent if there exists a node in $T$ to the north and another to the east of $q$. Equivalently, ascents of $T$ are the nodes of $T$ on which we can apply a right rotation.
\end{definition}

\begin{lemma}[\cite{ceballos_sweakorderII}]
The interior faces $I$ of the $\nu$-Tamari complex are in bijective correspondence with pairs $(T,A)$, where $T$ is a $\nu$-tree and $A$ is a subset of its ascents, via the map $I=T \setminus A$.
\end{lemma}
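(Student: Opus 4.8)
The plan is to establish a bijection between interior faces of $\mathcal{TC}(\nu)$ and the pairs $(T,A)$ by using the characterization of facets as $\nu$-trees together with the notion of ascent from \Cref{ascent}. First I would recall that a face $I$ of the $\nu$-Tamari complex is a collection of pairwise $\nu$-compatible nodes in $A_\nu$, and that $I$ is an \emph{interior} face precisely when it is contained in at least two distinct facets, i.e.\ at least two distinct $\nu$-trees. The key observation is that extending a face $I$ to a facet means adding nodes to obtain a maximal $\nu$-compatible collection, and the freedom in this extension is governed exactly by which nodes can be flipped.

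The forward map sends a pair $(T,A)$ to the set $I = T \setminus A$. I would first check that this is a well-defined face: since $T$ is $\nu$-compatible and $I \subseteq T$, the subset $I$ is automatically $\nu$-compatible. Then I would argue $I$ is interior by showing it lies in more than one $\nu$-tree whenever $A$ is nonempty, and separately handle the case $A = \emptyset$ (where $I = T$ is itself a facet, which counts as interior in the usual convention for a simplicial complex when the facet is a maximal face, so I would need to confirm the paper's convention treats facets as interior faces). For the heart of the argument, I would use the fact that an ascent $q$ of $T$ is precisely a node on which a right rotation (increasing flip) can be applied, which by the rotation description in Figure~\ref{rotation} exchanges $q$ with a different node $q'$ while keeping $T \setminus \{q\}$ fixed. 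Thus removing a set $A$ of ascents yields a face $I = T \setminus A$ that can be completed back to $T$, and independently completed by flipping any subset of $A$, so the facets containing $I$ correspond to the ways of independently choosing, for each removed ascent, whether to reinsert the old node or the flipped one.

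For injectivity and surjectivity I would proceed as follows. Given an interior face $I$, I would recover the pair $(T,A)$ by identifying the set of nodes that can be added to $I$ to reach a facet; the crux is that these added nodes come in ``flippable pairs,'' and choosing one representative from each pair (say the one appearing in the lex-smallest or rotation-minimal tree) determines a canonical $T \supseteq I$, with $A = T \setminus I$ being exactly the ascents of $T$ that were removed. I expect the \textbf{main obstacle} to be showing that the completions of an interior face $I$ to facets are governed by \emph{independent} binary choices, one per removed ascent — in other words, that the ``flip graph'' local to $I$ is a Boolean lattice of rank $|A|$, so that $I$ has exactly $2^{|A|}$ completing facets and a unique minimal one. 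This requires verifying that distinct ascents flip independently (flipping one does not destroy the flippability of another), which should follow from the geometric locality of the rotation in Figure~\ref{rotation}: each flip only alters the single node $q$ within its bounding rectangle and leaves the other nodes untouched. Once independence is established, the map $(T,A) \mapsto T \setminus A$ is visibly a bijection, completing the proof.
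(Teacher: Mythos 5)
The paper does not actually prove this lemma: it is imported verbatim from the cited reference, so there is no in-paper argument to compare against. Judged on its own terms, your proposal has a genuine error precisely at the step you flag as the main obstacle. You claim that the facets containing an interior face $I=T\setminus A$ are governed by \emph{independent} binary choices, one per removed ascent, so that $I$ lies in exactly $2^{|A|}$ facets and the local structure is a Boolean lattice. This is false. By \cite[Proposition 5.16]{CPS19} (invoked later in the paper, in the proof of Lemma~\ref{lemmaspherical}), the set of facets containing an interior face, ordered by reverse containment, is the face poset of a \emph{product of classical associahedra}, which is a cube only in degenerate cases. The paper's own Example~\ref{exENEENasso} is a counterexample: the interior face $I_2$ there has $|A|=2$ and corresponds to a pentagon, so it is contained in $5$ facets, not $4$. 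The underlying reason is that rotations at distinct ascents do not act independently: performing the rotation at one ascent changes the nodes $p,r$ that parametrize the rotation at a neighbouring ascent, so the ``one flippable pair per ascent'' picture, and the injectivity/surjectivity argument you build on it, collapse.

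A second, smaller issue is your characterization ``interior iff contained in at least two facets,'' which fails for simplicial balls of dimension at least $2$ (a boundary vertex of a triangulated disk lies in many triangles yet is a boundary face). The standard route, and the one taken in the cited reference, is instead: (i) show that for a $\nu$-tree $T$ and any subset $A$ of its ascents, $T\setminus A$ is contained in no boundary ridge (a ridge $T'\setminus\{q\}$ with $q$ non-flippable), hence is interior; (ii) show that every interior face $I$ admits a \emph{unique} completion to a facet $T\supseteq I$ in which every element of $T\setminus I$ is increasingly flippable, i.e.\ an ascent — this minimal completion recovers $(T,A)$ and gives injectivity without any count of the facets containing $I$. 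Your instinct to recover $T$ as a canonical rotation-minimal completion is the right one, but it must be carried out via this minimality/uniqueness argument rather than via the independence claim, which is what actually fails.
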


As we can observe from~\Cref{ENEENasso}, the $\nu$-Tamari lattice has a very rich underlying geometric structure.  
Its Hasse diagram can be geometrically realized as the edge graph of a polytopal complex called the $\nu$-associahedron \cite{CPS19}. The construction in \cite{CPS19} uses techniques from tropical geometry. The goal of this work is to give new realizations in terms of brick polyhedra. The following is a purely combinatorial definition.

\begin{definition}[$\nu$-Associahedron \cite{CPS19}]\label{defasso}
The \emph{$\nu$-associahedron} is a polytopal complex induced by an arrangement of tropical hyperplanes, whose poset of faces (ordered by containment) is anti-isomorphic to the poset of interior faces of the $\nu$-Tamari complex (that is the poset of interior faces ordered by reverse containment).
\end{definition}

\begin{corollary}[\cite{ceballos_sweakorderII}] \label{inner}
The faces of the $\nu$-associahedron are in correspondence with pairs $(T, A)$, where $T$ is a $\nu$-tree and $A$ is a subset of its ascents. The dimension of the face corresponding to~$(T,A)$ is the cardinality $|A|$.
\end{corollary}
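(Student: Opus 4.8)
The plan is to deduce both assertions by composing two facts already at our disposal: the bijection between interior faces of the $\nu$-Tamari complex and pairs $(T,A)$ supplied by the Lemma preceding \Cref{defasso} (via $I=T\setminus A$, with $A$ a subset of the ascents of $T$), and the order-reversing bijection $\Psi$ between the poset of interior faces of $\mathcal{TC}(\nu)$ under containment and the face poset of the $\nu$-associahedron under containment, which is exactly the anti-isomorphism postulated in \Cref{defasso}. Writing $\varphi$ for the map $I\mapsto(T,A)$, the correspondence $F\mapsto \varphi(\Psi^{-1}(F))$ is a bijection between faces of the $\nu$-associahedron and pairs $(T,A)$, which settles the first claim immediately. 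Everything then reduces to verifying the dimension formula $\dim F=|A|$ for $F=\Psi(I)$ with $I=T\setminus A$.

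For the dimension count I would pass to the poset of interior faces ordered by \emph{reverse} containment, on which $\Psi$ becomes an honest poset isomorphism onto the face poset of the $\nu$-associahedron. The $\nu$-trees are precisely the minimal elements of this reverse-containment poset: they are the facets of $\mathcal{TC}(\nu)$, hence the maximal faces under containment, and each is an interior face (the Lemma with $A=\varnothing$). Since the subword complex $\mathcal{SC}(Q_\nu,w_\nu)\cong\mathcal{TC}(\nu)$ is pure — the facets are complements of reduced expressions for $w_\nu$, all of the same length — every $\nu$-tree has the same number of nodes, say $d+1$, so $\dim_{\mathcal{TC}(\nu)}(I)=|I|-1=d-|A|$ for $I=T\setminus A$. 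A maximal chain from the minimal element $T$ up to $I$ in the reverse-containment order is obtained by deleting the nodes of $A$ one at a time; every intermediate set $T\setminus A'$ with $A'\subseteq A$ is again an interior face by the Lemma, so this chain has length exactly $|A|$, which is therefore the height of $I$ above the minimal elements of the poset.

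Finally I would transport this height through $\Psi$. As a poset isomorphism it preserves covering relations and hence heights above minimal elements, and it carries the $\nu$-trees (minimal elements on the Tamari side) to the minimal faces of the $\nu$-associahedron, namely its vertices; thus $F=\Psi(I)$ has height $|A|$ above the vertices. The step I expect to be the main obstacle is identifying this purely order-theoretic height with the geometric dimension of $F$: here one uses that the $\nu$-associahedron is a genuine polytopal complex, so the subposet of faces contained in a fixed face $F$ is the face lattice of the polytope $F$, graded of rank $\dim F$ with the complex's vertices as atoms; consequently the height of $F$ above the vertices equals $\dim F$. Combining the two computations yields $\dim F=|A|$, as claimed. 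The delicate points to pin down are the matching of the bottom layers of the two posets (so that heights are measured from corresponding minimal elements) and the purity of $\mathcal{TC}(\nu)$, which makes $|T|$ independent of $T$ and renders the height computation uniform across all faces.
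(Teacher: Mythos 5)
The paper does not prove this statement at all: it is imported verbatim as a cited corollary of the Lemma preceding \Cref{defasso} (interior faces of the $\nu$-Tamari complex $\leftrightarrow$ pairs $(T,A)$) together with \Cref{defasso} itself, with the proof deferred to the reference. Your argument is a correct self-contained reconstruction of exactly that deduction. The bijection part is immediate, as you say, from composing the Lemma with the anti-isomorphism of posets in \Cref{defasso}. For the dimension formula your chain-length argument is sound: purity of $\mathcal{SC}(Q_\nu,w_\nu)$ (all facets are complements of reduced words for the same $w_\nu$) gives both the lower bound (the chain $T\supsetneq T\setminus\{a_1\}\supsetneq\cdots\supsetneq T\setminus A$, each term interior by the Lemma since $A'\subseteq A$ is again a set of ascents) and the upper bound (any chain from a facet $T'$ down to $I$ has length at most $|T'|-|I|=|A|$), so the height of $I$ over the minimal elements of the reverse-containment poset is exactly $|A|$. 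The final identification of this height with $\dim F$ is also fine: in a polytopal complex every face of the complex contained in $F$ is a face of the polytope $F$ (intersections of cells are faces of each), so the interval below $F$ is the graded face lattice of $F$ with the complex's vertices as atoms, and the $\nu$-trees do land on the vertices under the anti-isomorphism since they are the maximal interior faces. The only cosmetic remark is that your computation $\dim_{\mathcal{TC}(\nu)}(I)=|I|-1=d-|A|$ is not actually used in the rest of the argument; the height computation carries the whole weight.
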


\begin{example}[$\nu$-Associahedron for $\nu=ENEEN$]\label{exENEENasso}
    Consider the $\nu$-subword complex for~$\nu=ENEEN$, the $\nu$-associahedron is shown in Figure~\ref{ENEENasso}, and its edge graph is the Hasse diagram of the $\nu$-Tamari lattice. The interior face $I_1$ illustrated in Figure~\ref{ENEENasso} corresponds to the orange line segment, while the interior face $I_2$ corresponds to the red pentagon. Note that $I_2 \subseteq I_1$, but the face corresponding to $I_1$ is contained in the face corresponding to $I_2$. The containment poset of interior faces is reversed.

    \begin{figure}[h]
    \centering
    \includegraphics[width=0.6\textwidth]{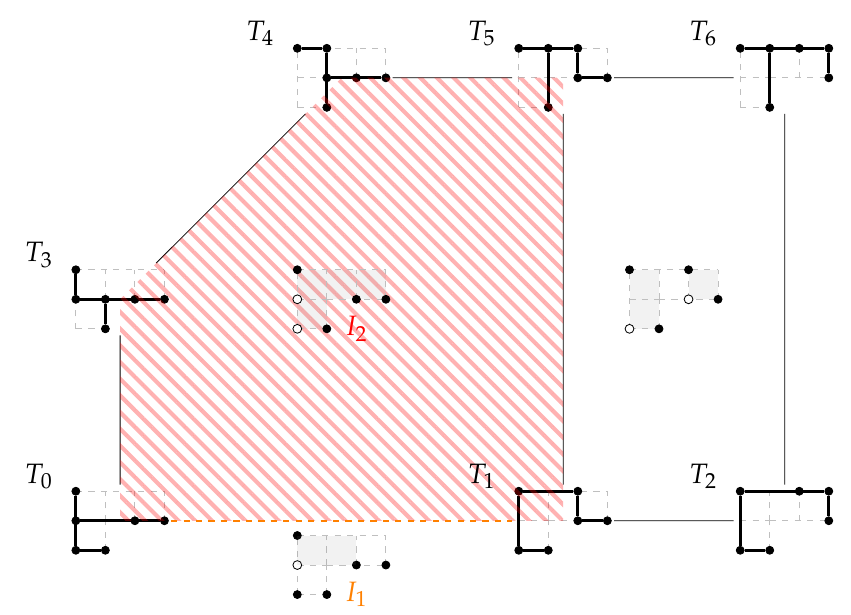} 
    \caption{$\nu$-Associahedron for $\nu=ENEEN$.}
    \label{ENEENasso}
    \end{figure}

\end{example}

\section{The $\nu$-brick polyhedron}
In this section, we introduce the $\nu$-brick polyhedron $\mathcal{B}(Q_\nu, w_\nu)$ and provide a useful tool (Corollary \ref{brickvectorofnutree}) to compute the brick vector $b(T)$ of a $\nu$-tree $T$.

\begin{definition}[$\nu$-Brick Polyhedron]
    For a lattice path $\nu$ the \emph{$\nu$-brick polyhedron} $\mathcal{B}(Q_\nu, w_\nu)$ is defined as the brick polyhedron of the $\nu$-subword complex~$\mathcal{SC}(Q_\nu, w_\nu)$. 
\end{definition}

\begin{convention}\label{conv}
We use the following convention to draw a pipe dream $P(T)$, a $\nu$-tree $T$ and the Ferrers diagram simultaneusly in the same figure. 
We shift the Ferrers diagram slightly in the direction $-(\epsilon, \epsilon)$ for some small $\epsilon > 0$. This is illustrated in Figure~\ref{fig_convpipe}. 
We say a pipe is above (or below) a lattice point in the Ferrers diagram if this is the case in the shifted figure. Additionally, we consider a pipe to be above a point if it lies entirely to the left of the point. This can be seen in Figure \ref{fig_conv}.
\end{convention}
\begin{figure}[h]
    \centering
    \includegraphics[width=0.15\textwidth]{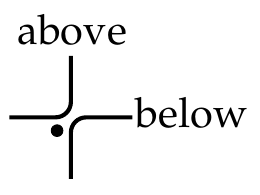}
    \hspace{1cm}
    \raisebox{-3.3mm}{\includegraphics[width=0.15\textwidth]{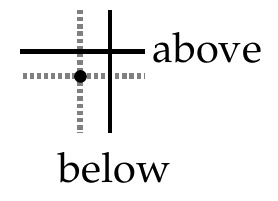}}
    \hspace{1cm}
    \raisebox{-4mm}{\includegraphics[width=0.15\textwidth]{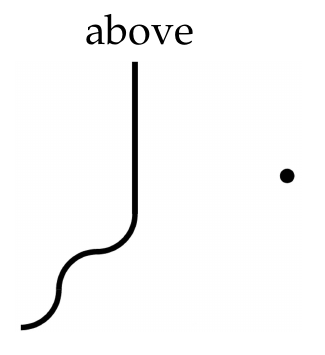}}
    \caption{Illustration of Convention~\ref{conv}.}
    \label{fig_conv}
\end{figure}

\begin{proposition}\label{weightfunctionnutree}
    For a $\nu$-tree $T$ the weight function of the corresponding facet in~$\mathcal{SC}(Q_\nu, w_\nu)$ is given by 
    $$\omega(T,k)= \sum  \limits _ {\text{$i$: pipe $i $ is above $p_k$}} e_i$$ 
    for $k\in [m]$, where $Q_\nu$ is of length $m$ and $p_k$ is the lattice point in the Ferrers diagram corresponding to the $k$th letter in $Q$.
\end{proposition}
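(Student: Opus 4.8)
The plan is to unwind the definition of the weight function $\omega(T,k) = \prod Q_{\{1,\dots,k-1\}\setminus I}(\omega_{q_k})$ and track how the fundamental weight $\omega_{q_k}$ is transformed by the partial product of transpositions preceding position $k$ in the complement of the facet $I = I(T)$. Recall that $\omega_p = e_1 + \dots + e_p$, so before applying any permutation, the weight associated to a letter $s_{q_k} = s_{d(p_k)+1}$ is $\sum_{i \le d(p_k)+1} e_i$. The key is to interpret the product $\prod Q_{\{1,\dots,k-1\}\setminus I}$ combinatorially via the pipe dream $P(T)$: the transpositions in the complement of the facet correspond exactly to the crossings of $P(T)$ that occur at positions read before the lattice point $p_k$ (in the bottom-to-top, left-to-right order defining $Q_\nu$). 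The main idea is that this partial permutation records precisely which pipes have been permuted into which positions by the time we reach $p_k$, and applying it to $\omega_{q_k}$ picks out the set of pipes occupying the first $d(p_k)+1$ wire positions just below $p_k$.

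First I would set up notation carefully: fix the reading order of $Q_\nu$ and identify, for each position $k$, the lattice point $p_k$ and the letter $s_{q_k}$ with $q_k = d(p_k)+1$. Then I would establish the central lemma that the partial product $w_{<k} := \prod Q_{\{1,\dots,k-1\}\setminus I}$ sends the index set $\{1,\dots,q_k\}$ to the set of labels of pipes lying above $p_k$ in the sense of Convention~\ref{conv}. This reduces the proposition to showing $w_{<k}(\{1,\dots,q_k\}) = \{i : \text{pipe } i \text{ is above } p_k\}$, since $w_{<k}(\omega_{q_k}) = w_{<k}(e_1 + \dots + e_{q_k}) = \sum_{i \le q_k} e_{w_{<k}(i)}$, and the right-hand side is exactly $\sum_{\text{pipe } i \text{ above } p_k} e_i$ once we reindex. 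The bookkeeping distinguishing crosses (positions in the complement, contributing a transposition) from elbows (positions in $I$, skipped) is what makes the partial product equal the permutation realized by following the pipes up to the horizontal level just below $p_k$ and counting the first $q_k$ of them.

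The core of the argument is an induction on $k$ (equivalently, on the reading order of the lattice points), where the inductive step tracks how a single crossing or elbow at position $k-1$ modifies the set of pipes above the current lattice point. A crossing at a position in the complement swaps two adjacent pipes, which matches right-multiplication by the corresponding simple transposition in $w_{<k}$; an elbow, being a position in $I$, leaves the pipe arrangement's relevant prefix unchanged, matching the omission of that letter from the partial product. I would verify the base case at the first letter directly, where no transpositions have been applied and $\omega(T,1) = \omega_{q_1} = \sum_{i \le q_1} e_i$ agrees with the pipes entering from the left. The bulk of the work is translating the algebraic action of $w_{<k}$ on $\omega_{q_k}$ into the geometric statement about which pipes sit above $p_k$, using the labelling of pipes entering on the left from $1$ to $n$.

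The hard part will be making the correspondence between the algebraic partial product and the geometric pipe configuration fully rigorous at the boundary, namely handling the convention (Convention~\ref{conv}) that a pipe lying entirely to the left of $p_k$ counts as being above it, and confirming that the reading order of $Q_\nu$ (columns left to right, each column bottom to top) aligns with tracing the pipes through the triangular grid so that $q_k = d(p_k)+1$ indeed selects the correct prefix of wire positions. I expect the combinatorial identification of $w_{<k}(\{1,\dots,q_k\})$ with the pipes-above set to require a careful case analysis of how the crossings encountered before $p_k$ reorder the pipes, but once this is in place the identity $\omega(T,k) = \sum_{\text{pipe } i \text{ above } p_k} e_i$ follows immediately from the linearity of the permutation action on $\omega_{q_k} = \sum_{i \le q_k} e_i$.
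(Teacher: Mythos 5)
Your proposal is correct in substance, but it is worth noting that the paper does not actually prove \Cref{weightfunctionnutree}: the statement is given without a proof environment, followed only by a remark that it is ``essentially the same as the classical description of brick vectors in type $A$, by counting bricks below pseudolines in a sorting network'' (citing Pilaud--Santos). So you are supplying an argument the paper defers entirely to the classical literature, and the argument you give is the standard one: unwind $\omega(T,k)=\prod Q_{\{1,\dots,k-1\}\setminus I}(\omega_{q_k})$, use $\omega_{q_k}=e_1+\dots+e_{q_k}$, and identify the partial product $w_{<k}$ with the pipe permutation realized by the crossings read before position $k$, so that $w_{<k}(\{1,\dots,q_k\})$ is the set of labels of the $q_k$ pipes sitting above $p_k$. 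Two points deserve the care you already flag. First, the action convention: with the left action $w\cdot e_l=e_{w(l)}$, the quantity $w_{<k}(l)$ is the \emph{label of the pipe currently occupying level $l$}, not the current level of pipe $l$ (the two differ by an inverse); your reduction is stated correctly, but the inductive step must consistently track ``which pipe sits at level $l$'' rather than ``where pipe $l$ sits''. Second, the induction on the reading order (columns left to right, each column bottom to top) requires a sweep-line formalization: the ``levels $1,\dots,q_k$'' at time $k$ live on the lattice path separating the already-read boxes from the unread ones, and the step from the top of one column to the bottom of the next is exactly where the clause of \Cref{conv} that a pipe lying entirely to the left of $p_k$ counts as above it becomes essential. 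A quick consistency check supporting the statement: the number of pipes above $p_k$ must be $q_k=d(p_k)+1$, and indeed $\sum_k q_k=31$ for $\nu=ENEEN$ matches $-\sum_i b(T)_i=31$ for every brick vector listed in \Cref{exENEENbp}. With those two points made precise, your argument is a complete proof of a statement the paper leaves to the reader.
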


\begin{corollary}\label{brickvectorofnutree}
        For a $\nu$-tree $T$ the $i$th entry of the corresponding brick vector is given by $$\text{$b(T)_i=-\# $ lattice points in the Ferrers diagram below pipe $i$.}$$
\end{corollary}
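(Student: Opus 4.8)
The plan is to obtain the formula as an essentially immediate consequence of \Cref{weightfunctionnutree} together with the definition of the brick vector in \Cref{bpdrei}; no genuinely new geometric input is required, and the whole content reduces to a careful bookkeeping of the summation index and a purely definitional reinterpretation of \Cref{conv}. Concretely, I would start from
\[
b(T)=-\sum_{k=1}^{m}\omega(T,k),
\]
where $m$ is the length of $Q_\nu$, and substitute the weight function computed in \Cref{weightfunctionnutree}, giving
\[
b(T)=-\sum_{k=1}^{m}\ \sum_{i:\ \text{pipe }i\text{ is above }p_k} e_i .
\]

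Next I would read off the $i$th coordinate by interchanging the two summations, i.e. by collecting the coefficient of $e_i$. Since $\omega(T,k)$ is a $0$--$1$ vector whose $i$th entry is $1$ precisely when pipe $i$ is above $p_k$, this yields
\[
b(T)_i=-\#\{\,k\in[m]\ :\ \text{pipe }i\text{ is above }p_k\,\}.
\]
I would then invoke \Cref{def3}: the letters of $Q_\nu$ are in bijection with the lattice points of $A_\nu$, so each lattice point of the Ferrers diagram occurs exactly once as some $p_k$, and the count over $k\in[m]$ is the same as a count over all lattice points of the Ferrers diagram. Finally, using \Cref{conv}, the relation ``pipe $i$ is above $p_k$'' is by definition the same as ``$p_k$ is below pipe $i$'', so the count equals the number of lattice points in the Ferrers diagram lying below pipe $i$, which is exactly the claimed formula.

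The only real subtlety, and hence the step I would treat most carefully, is the indexing/bijection: one must confirm that ranging over $k\in[m]$ corresponds to ranging over each lattice point of the Ferrers diagram exactly once, which is precisely the content of the construction of $Q_\nu$ in \Cref{def3}. The remaining ``above/below'' reinterpretation is formal, but it should be stated explicitly that \Cref{conv} fixes the meaning of these relations for every lattice point (including the edge case where a pipe lies entirely to the left of a point and is therefore declared to be above it), so that the coordinatewise count is unambiguous. Everything else is routine, and I do not anticipate any genuine obstacle beyond making these two translations precise.
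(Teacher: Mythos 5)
Your derivation is correct and is exactly the intended one: the paper states this as an immediate corollary of \Cref{weightfunctionnutree} (offering no separate proof), obtained precisely by substituting the weight-function formula into $b(T)=-\sum_{k}\omega(T,k)$, swapping the order of summation, and using the bijection between letters of $Q_\nu$ and lattice points of $A_\nu$ together with \Cref{conv}. Nothing is missing.
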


\begin{remark}
    This description is essentially the same as the classical description of brick vectors in type $A$, by counting bricks below pseudolines in a sorting network~\cite{bpofsn}.
\end{remark}

\begin{example}[$\nu$-Brick Polyhedron for $\nu=ENEEN$]\label{exENEENbp}
The word $Q_\nu$ and element $w_\nu$ are $Q_\nu=(s_3,s_2,s_1,s_4,s_3,s_2,s_4,s_3,s_5,s_4)$, and $w_\nu=s_3s_2s_3s_4$. By Corollary \ref{brickvectorofnutree}, the brick vectors are obtained by counting the points below the pipes. For instance, the computation for $T_4$ from Example \ref{exENEENasso}, using Convention~\ref{conv}, is shown in Figure~\ref{fig_convpipe}.
For instance, the $3$rd entry $b(T_4)_3=-5$ because there are $5$ lattice points in the Ferrers diagrams below pipe~$3$, as shown in~\Cref{fig_convpipe} (right). 
Counting lattice points below each of the six pipes, we get $b(T_4)=-(10,8,5,7,1,0)$.

\end{example}
\begin{figure}[h]
    \centering
\includegraphics[width=0.6\textwidth]{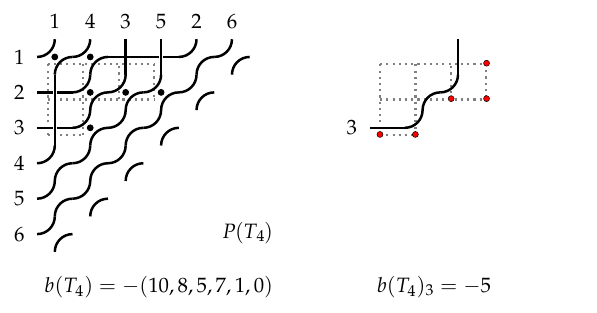}
    \caption{Computation of the brick vector for the $\nu$-tree $T_4$, by counting lattice points below pipes in the corresponding pipe dream.}
    \label{fig_convpipe}
\end{figure}

The brick vectors of all the $\nu$-trees from Example~\ref{exENEENasso} are:
\begin{align*}
    b(T_0) & = -(10,9,6,5,1,0) & b(T_4) & = -(10,8,5,7,1,0) \\
    b(T_1) & = -(10,7,8,5,1,0) & b(T_5) & = -(10,7,6,7,1,0) \\
    b(T_2) & = -(10,6,8,5,2,0) & b(T_6) & = -(10,6,6,7,2,0)  \\
    b(T_3) & = -(10,9,5,6,1,0)
\end{align*}
The $\nu$-brick polyhedron $\mathcal{B}(Q_\nu, w_\nu)\subseteq \mathbb{R}^6$ is of dimension $4$. However, to get a feeling about how it looks like, we can remove the letters $s_1$ and $s_5$ from $Q_\nu$. They are contained in every facet (are non-flippable) and give rays in the brick polyhedron. If we call $\widetilde{Q_\nu}$ the resulting word $\widetilde{Q_\nu}=(s_3,s_2,s_4,s_3,s_2,s_4,s_3,s_4)$ then the brick polyhedron $\mathcal{B}(\widetilde{Q_\nu}, w_\nu)$ is of dimension $3$ and is illustrated in Figure~\ref{ENEENcomp} (Left). The Bruhat cone is given by~$\mathcal{C}^+(w_\nu, \operatorname{Dem}(Q_\nu))= \operatorname{cone}\{\alpha_4, s_3(\alpha_4)\}$.

\section{A geometric realization via brick polyhedra}
The goal of this paper is to show that the $\nu$-associahedron can be geometrically realized as the complex of bounded faces of the $\nu$-brick polyhedron. 

The following is our main result, which can be observed from Examples \ref{exENEENasso} and \ref{exENEENbp}, illustrated in Figure~\ref{ENEENcomp}.

\begin{theorem}\label{main_theorem}
The $\nu$-associahedron is geometrically realized as the polytopal complex of bounded faces of the $\nu$-brick polyhedron $\mathcal{B}(Q_\nu, w_\nu)$. In other words, the poset of bounded faces of~$\mathcal{B}(Q_\nu, w_\nu)$ is anti-isomorphic to the poset of interior faces of the $\nu$-subword complex ($\cong\nu$-Tamari complex).
\end{theorem}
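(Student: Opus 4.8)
The plan is to construct an explicit inclusion-reversing bijection $J\mapsto F_J$ from interior faces of $\mathcal{SC}(Q_\nu,w_\nu)$ to bounded faces of $\mathcal{B}(Q_\nu,w_\nu)$ and to check that it respects dimension and the polytopal complex structure. Writing an interior face as $J=T\setminus A$ with $A$ a set of ascents of a $\nu$-tree $T$ (by the Lemma identifying interior faces with pairs $(T,A)$), I would set
\[
F_J:=\operatorname{conv}\{\, b(I) : I \text{ a facet of } \mathcal{SC}(Q_\nu,w_\nu) \text{ with } J\subseteq I \,\},
\]
the convex hull of the brick vectors of all $\nu$-trees completing the partial node set $J$. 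The theorem then reduces to three claims: each $F_J$ is a genuine bounded face of dimension $|A|$; the assignment is a bijection onto all bounded faces; and $J_1\subseteq J_2$ if and only if $F_{J_2}\subseteq F_{J_1}$. The last item is exactly the asserted anti-isomorphism, and by Definition \ref{defasso} together with Corollary \ref{inner} it is also what identifies the bounded complex with the $\nu$-associahedron.

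First I would fix the local geometry at the vertices. By Corollary \ref{brickvectorofnutree} the brick vectors $b(T)$ are explicit and pairwise distinct, and I would argue each is a vertex using the flip/root-configuration calculus of brick polyhedra: for a facet $I$ and a position $i\in I$, the available flip moves the brick vector along the root $r(I,i)$, and the root configuration $R(I)$ governs the cone of edge- and ray-directions at $b(I)$. Translating to $\nu$-trees, the ascents of $T$, equivalently the increasing flips (right rotations) of Definition \ref{ascent}, produce the bounded edges $[b(T),b(T')]$ joining $T$ to its rotations, and likewise the decreasing flips give bounded edges to smaller trees, whereas the non-flippable positions (for instance the forced letters $s_1,s_5$ of Example \ref{exENEENbp}) contribute rays; the recession cone of $\mathcal{B}(Q_\nu,w_\nu)$ is the Bruhat cone $\mathcal{C}^+(w_\nu,\operatorname{Dem}(Q_\nu))$.

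Next I would produce the supporting functional realizing $F_J$ and, at the same time, detect boundedness. I would pick $\phi_J$ in the relative interior of the normal cone cut out by the roots $r(T,a)$ for $a\in A$, so that $b(I)$ maximizes $\langle\phi_J,\cdot\rangle$ exactly for the facets $I\supseteq J$; the dimension $\dim F_J=|A|$, matching the dimension of the face $(T,A)$ in Corollary \ref{inner}, then requires showing that the brick vectors of the $\nu$-trees completing $J$ affinely span an $|A|$-dimensional space, which I would derive from the local flip geometry, each increasing flip on an ascent of $A$ displacing the brick vector in a new independent direction. The decisive point is that $F_J$ is bounded \emph{precisely when} $J$ is interior: using $\mathcal{C}^+(w_\nu,\operatorname{Dem}(Q_\nu))=\bigcap_{\text{facets } J'}\operatorname{cone} R(J')$ from the Proposition above, I would show that $\phi_J$ lies in the interior of the polar of the Bruhat cone exactly when the flips used are honest increasing flips toward other $\nu$-trees, i.e. genuine ascents, which is exactly interiorness of $J$; a non-interior (boundary) face instead retains a Bruhat-cone ray and yields an unbounded face. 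This matching of the boundary of the subword complex with the recession cone of the polyhedron is the heart of the argument and the step I expect to be the main obstacle.

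Finally I would assemble the bijection and the order reversal. For surjectivity, an arbitrary bounded face is the $\phi$-maximal face $F_\phi$ for some $\phi$ in the interior of the polar Bruhat cone; its vertices are $\{b(I): I \text{ maximizes } \phi\}$, the intersection $J=\bigcap\{I : b(I)\in F_\phi\}$ is a face of $\mathcal{SC}(Q_\nu,w_\nu)$ which is interior because $\phi$ lies in the polar interior, and $F_\phi=F_J$. Injectivity is immediate, since $J$ is recovered as the intersection of the facets indexing the vertices of $F_J$. For the order relation, enlarging $J$ shrinks the set of completing $\nu$-trees, so $J_1\subseteq J_2$ forces $F_{J_2}\subseteq F_{J_1}$; combined with the dimension count this yields the inclusion-reversing bijection between bounded faces of $\mathcal{B}(Q_\nu,w_\nu)$ and interior faces of $\mathcal{SC}(Q_\nu,w_\nu)$, hence the geometric realization of the $\nu$-associahedron.
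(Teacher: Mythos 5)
Your outline has the right overall shape --- the map $J\mapsto F_J$ with $F_J=\operatorname{conv}\{b(I): J\subseteq I\}$, the order reversal, and the reduction to Definition~\ref{defasso} and Corollary~\ref{inner} all match what the paper ultimately proves --- but the two steps you defer to ``local flip geometry'' are precisely where the real work lies, and as stated your plan stalls there. You ``pick $\phi_J$ in the relative interior of the normal cone cut out by the roots $r(T,a)$ for $a\in A$'' so that the facets containing $J$ are exactly the maximizers. The existence of such a functional --- one vanishing on $r(T,a)$ for $a\in A$ and strictly positive on $r(T,t)$ for $t\in T\setminus A$ --- is not automatic: the corresponding system of equalities and inequalities can be infeasible when the marked subset is not a set of ascents (the paper's Example~\ref{exgbigger} exhibits such infeasible systems, which is exactly why the corresponding $\mathcal{B}^I$ fail to be faces). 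Establishing feasibility when $A$ consists of ascents is the paper's Lemma~\ref{exlf}, proved by induction on $|A|$: rotate the northeast-most ascent and run an intermediate-value argument along the segment joining points of the cones $C(T_{\bar{A}})$ and $C(T'_{\bar{A}})$. Your proposal gives no argument for this, and it is the crux. Relatedly, the ``exactly'' in ``maximizes exactly for $I\supseteq J$'' needs the flip-connectivity argument of Lemma~\ref{ifflemma}: one must show that any facet whose brick vector lies in the affine span of the face actually contains $J$, which the paper does by propagating the membership conditions $r(J_k,j)\in V_F$ along a flip sequence and using the minimal vertex $J_{F,\min}$.

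Second, your dimension count and boundedness argument are asserted rather than proved. That each ascent flip moves the brick vector in ``a new independent direction'' is the linear independence of $\{r(T,a): a\in A\}$, which the paper proves (Lemma~\ref{proofA1}) via a combinatorial analysis resting on the fact that a pipe has at most two turns inside the Ferrers diagram. The paper obtains boundedness not by your polar-Bruhat-cone criterion but by showing that $\mathcal{SC}(Q_{\nu,I},w_\nu)$ is spherical (Lemma~\ref{lemmaspherical}, using that cells of the $\nu$-associahedron are products of classical associahedra) and root independent, then invoking the Pilaud--Stump polytopality theorem (Corollary~\ref{cors}) to conclude that $\mathcal{B}^I(Q_\nu,w_\nu)$ is a bounded polytope with the stated face poset (Corollary~\ref{thm_bounded_faces}). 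Your heuristic for the converse --- that every bounded face arises from an interior $J$ --- would still need an argument; the paper gets it from the minimal-vertex analysis ($T=J_{F,\min}$, $A=J^F_{F,\min}$, with boundedness forcing every element of $A$ to be flippable and minimality forcing the flips to be increasing, hence ascents; Corollary~\ref{star3}). Until these points are filled in, the proposal is a roadmap rather than a proof.
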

    \begin{figure}[h]
    \centering
    \includegraphics[width=0.9\textwidth]{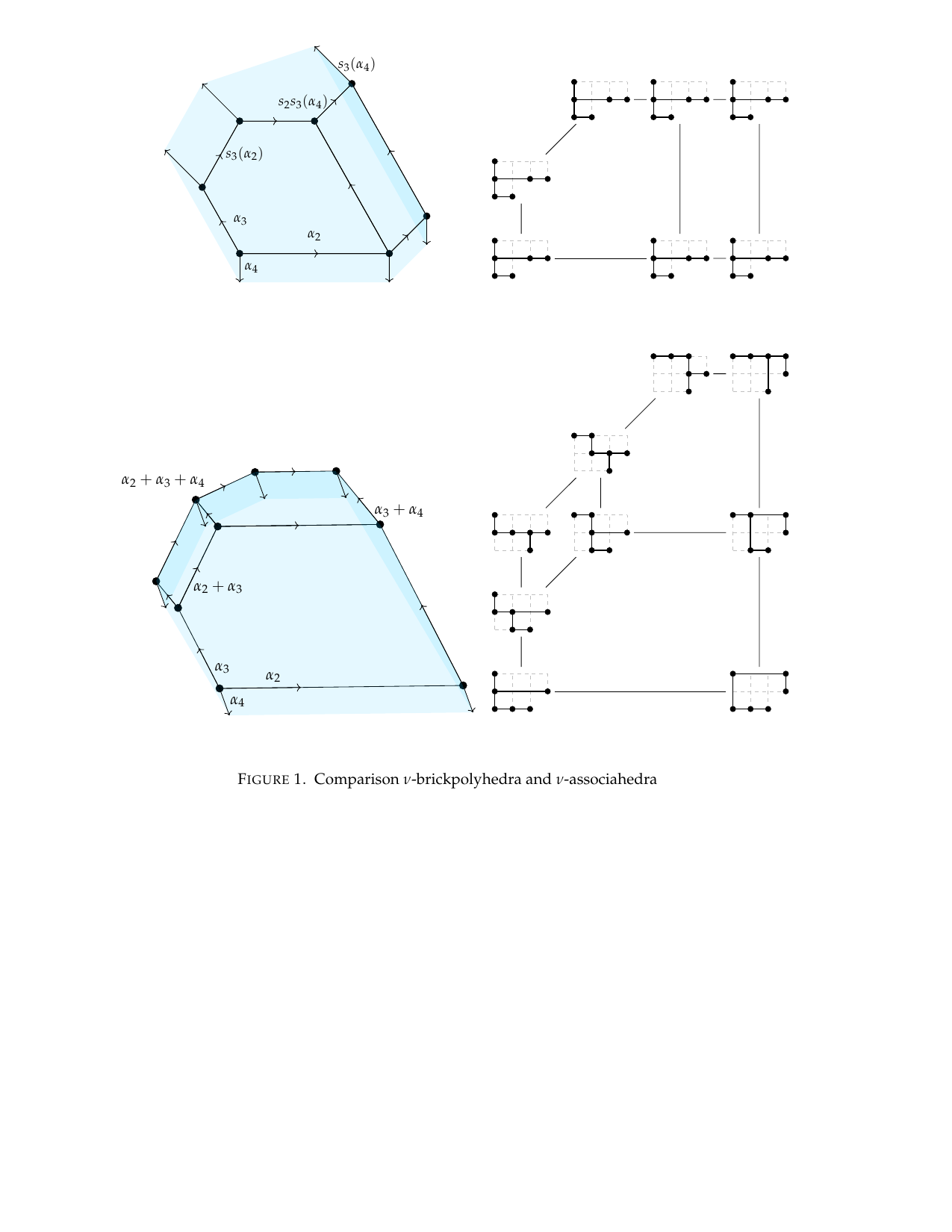} 
    \caption{Comparison of the $\nu$-brick polyhedron and  $\nu$-associahedron for $\nu=ENEEN$.}
    \label{ENEENcomp}
    \end{figure}

Our approach to establish this result involves the following steps:

\begin{enumerate}
\item Enhance the understanding of the faces of \textit{general} brick polyhedra (\Cref{facesall}).
\item Characterize the bounded faces of the $\nu$-brick polyhedron (\Cref{thm_main_bounded_faces}).
\item Analyze the poset of bounded faces of the $\nu$-brick polyhedron (Proof of Theorem~\ref{main_theorem} in Section \ref{finalsec}).
\end{enumerate}

\subsection{Faces of brick polyhedra} \label{sec_faces_brick_polyhedra}
In order to prove~\Cref{main_theorem}, it is useful to have a better understanding of the faces of brick polyhedra in general. For this purpose, we use a notion of modified brick polyhedra. 

\begin{definition}[Modified Bruhat Cone $\mathcal{C}^{I,+}$]
We denote by $\mathcal{SC}^I(Q,w)$ the set of all facets in the subword complex $\mathcal{SC}(Q,w)$ that contain a given face $I \in \mathcal{SC}(Q,w)$:
\begin{align*}
\mathcal{SC}^I(Q,w):=\{J \in \mathcal{SC}(Q,w) : I \subseteq J\}.
\end{align*}

For $J\in \mathcal{SC}^I(Q,w)$, the modified root configuration $R$$^I(J)$ is given by
\begin{align*}
\text{ $R$}^I(J):= \{ r(J,j) \mid j \in J \setminus I \}.
\end{align*}

 We define the modified Bruhat cone $\mathcal{C}^{I,+}$ as
\begin{align*}
\mathcal{C}^{I,+} := \bigcap_{J \in \mathcal{SC}^I(Q,w)} \operatorname{cone} \text{ $R$}^I(J).
\end{align*}

\end{definition}

\begin{definition}[Modified Brick Polyhedron $\mathcal{B}^I(Q, w)$]
For a face $I\in \mathcal{SC}(Q,w)$ of a non-empty subword complex, the \emph{modified brick polyhedron}~$\mathcal{B}^I(Q,w)$ is the polyhedron
\begin{center}
$\mathcal{B}^I(Q,w):= \text{conv}\{b(J)\mid$ J facet of $\mathcal{SC}(Q,w) \text{ and } I \subseteq J\}+\mathcal{C}^{I,+}$.
\end{center}
\end{definition}

\begin{example}\label{example3d}
For $\nu = ENEEN$ and using the labeling shown in Figure \ref{fig_labelENEEN}, the modified brick polyhedron $\mathcal{B}^{I_2}(Q, w)$ for $I_2=\{3,4,7,9\}$ is the bounded pentagon in Figure \ref{fig_brickpolyhedronENEEN}, while $\mathcal{B}^{I_3}(Q_\nu, w_\nu)$, where $I_3 = \{1, 2, 3, 9\}$, is not a face of the $\nu$-brick polyhedron $\mathcal{B}(Q_\nu, w_\nu)$ since it consists of a brick vector and two rays extending to infinity, as illustrated by the red region in Figure~\ref{fig_brickpolyhedronENEEN}. For $I_4 = \{3, 7, 8, 9\}$, the modified brick polyhedron $\mathcal{B}^{I_4}(Q_\nu, w_\nu)$ is a slice of the $\nu$-brick polyhedron $\mathcal{B}(Q_\nu, w_\nu)$, as illustrated by the green region in Figure~\ref{fig_brickpolyhedronENEEN}.
\end{example}

\begin{figure}[h]
    \centering
    \includegraphics[width=0.2\textwidth]{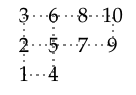}
    \caption{Grid for labeling of $\mathcal{SC}(Q_\nu, w_\nu)$ for $\nu=ENEEN$.}
    \label{fig_labelENEEN}
\end{figure}

\begin{figure}[h!]
    \centering
    \includegraphics[width=0.55\textwidth]{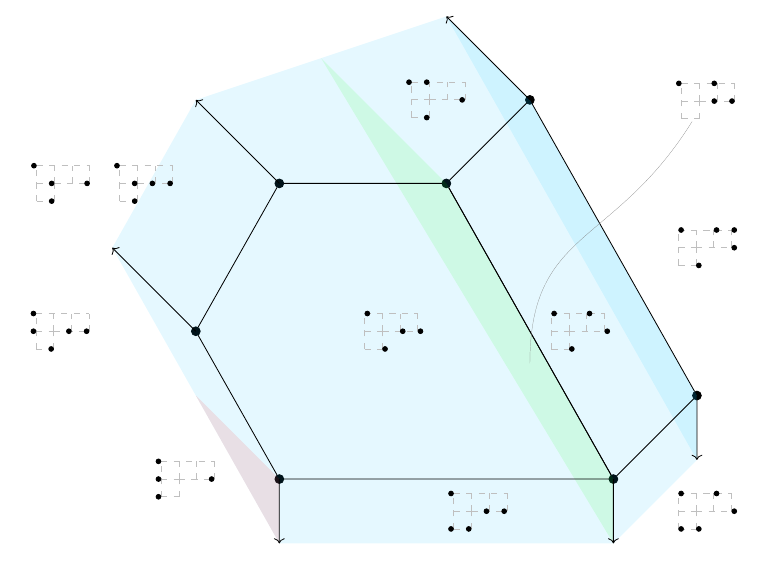}
    \caption{$\mathcal{B}(Q_\nu, w_\nu)$ for $\nu= ENEEN$.}
    \label{fig_brickpolyhedronENEEN}
\end{figure}

Moving forward, our objective is to establish the following result.

\begin{proposition}\label{facesall}
Every face $F$ of the brick polyhedron $\mathcal{B}(Q, w)$ is of the form $\mathcal{B}^I(Q, w)$ for some face $I \in \mathcal{SC}(Q,w)$. 
\end{proposition}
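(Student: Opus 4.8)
The plan is to characterize faces of $\mathcal{B}(Q,w)$ by their supporting linear functionals and match each such face to a modified brick polyhedron $\mathcal{B}^I(Q,w)$. Recall that $\mathcal{B}(Q,w)$ is a Minkowski sum of a polytope (the convex hull of brick vectors) and the Bruhat cone $\mathcal{C}^+(w,\operatorname{Dem}(Q))$. A proper face $F$ of such a polyhedron is selected by a linear functional $\langle c, \cdot\rangle$ that is bounded above on $\mathcal{B}(Q,w)$; the face is $\{x \in \mathcal{B}(Q,w) : \langle c, x\rangle = \max\}$. The first step is therefore to fix a functional $c$ attaining its maximum on $F$ and to determine exactly which brick vectors $b(J)$ and which rays of $\mathcal{C}^+$ lie in $F$: a brick vector $b(J)$ lies in $F$ iff it maximizes $\langle c, \cdot\rangle$ over all brick vectors, and a ray $\operatorname{cone}(\beta)$ of the cone lies in the recession cone of $F$ iff $\langle c, \beta\rangle = 0$ (boundedness of $c$ forces $\langle c, \beta\rangle \le 0$ on the whole cone).

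Next I would identify the candidate face $I$. The natural choice is to let $I$ be the \emph{intersection} of all facets $J$ whose brick vector $b(J)$ lies in $F$, that is $I := \bigcap_{b(J)\in F} J$. One must verify that this $I$ is itself a face of $\mathcal{SC}(Q,w)$ — which is automatic since any intersection of facets is a face of the simplicial complex — and that the facets appearing in $F$ are precisely those containing $I$, i.e. $\{J : b(J) \in F\} = \mathcal{SC}^I(Q,w)$. The forward inclusion holds by construction; the reverse inclusion, that every facet $J \supseteq I$ has $b(J) \in F$, is the crux and will require using the combinatorial structure relating flips in the subword complex to the geometry of brick vectors. Here I would invoke the known behavior of the root and weight functions under flips: flipping an element $i \in J$ changes $b(J)$ by a nonnegative multiple of the root $r(J,i)$, so the set of maximizers of a functional $c$ is closed under flips that do not decrease $\langle c, b(J)\rangle$, and one controls exactly which flips preserve membership in $F$ via the sign of $\langle c, r(J,i)\rangle$.

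With $I$ in hand, the remaining step is to verify the set equality $F = \mathcal{B}^I(Q,w)$ directly, checking both the polytope (convex-hull) part and the cone part. For the convex-hull part, $\operatorname{conv}\{b(J) : b(J) \in F\} = \operatorname{conv}\{b(J) : J \in \mathcal{SC}^I(Q,w)\}$ by the set equality just established. For the cone part, I would show that the recession cone of $F$ equals $\mathcal{C}^{I,+} = \bigcap_{J \in \mathcal{SC}^I(Q,w)} \operatorname{cone} R^I(J)$; this means relating the rays of the Bruhat cone surviving in $F$ (those $\beta$ with $\langle c,\beta\rangle = 0$) to the modified root configurations, using the proposition quoted in the excerpt that expresses the full Bruhat cone as $\bigcap_J \operatorname{cone} R(J)$. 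The key compatibility is that the functional $c$ vanishes on exactly the cone $\mathcal{C}^{I,+}$ and is negative elsewhere on $\mathcal{C}^+$, so that the Minkowski summand contributed by the cone in $F$ is exactly $\mathcal{C}^{I,+}$.

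\medskip

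\noindent\emph{Main obstacle.} The hardest part will be the reverse inclusion in the facet identification, namely proving that every facet $J$ containing $I$ automatically has $b(J)$ lying in the face $F$. This is where one cannot argue by pure convexity and must exploit the finer combinatorics of the subword complex — specifically the flip structure and the way $\langle c, b(J)\rangle$ interacts with the root configuration $R(J)$ — to rule out the possibility that some facet $J \supseteq I$ has its brick vector strictly in the interior of $\mathcal{B}(Q,w)$ relative to $c$. Establishing the precise correspondence between the vanishing locus of $c$ and the modified Bruhat cone $\mathcal{C}^{I,+}$, in a way that handles unbounded directions correctly, is the second delicate point and is where the generality beyond spherical subword complexes genuinely enters.
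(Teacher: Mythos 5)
Your overall architecture is the one the paper follows (identify a face $I$ with $\{J : b(J)\in F\}=\mathcal{SC}^I(Q,w)$, then match the cone parts), but as written there are two genuine gaps. The first is your choice of $I$. Taking $I:=\bigcap_{b(J)\in F}J$ can produce a face that is strictly too large: it absorbs every position contained in all facets of $\mathcal{SC}^{I}(Q,w)$ (for instance the positions of $s_1$ and $s_5$ in Example~\ref{exENEENbp}, which lie in \emph{every} facet and whose roots generate rays of the polyhedron). Enlarging $I$ strictly shrinks the modified root configurations $R^I(J)$ and hence the cone $\mathcal{C}^{I,+}=\bigcap_J\operatorname{cone}R^I(J)$, so for an unbounded face $F$ whose recession directions come from such positions, $\mathcal{B}^I(Q,w)$ loses those rays and is a \emph{proper} subset of $F$; your cone-matching step would then fail. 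The paper avoids this by fixing a single facet $J$ with $b(J)\in F$ and setting $I=J\setminus J^F$ where $J^F=\{j\in J : r(J,j)\in V_F\}$ and $V_F$ is the linear span of $F$ --- i.e.\ membership in $I$ is decided by whether the root lies in $V_F$, not by whether the position lies in all relevant facets.

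The second gap is the one you flag yourself: the reverse inclusion, that every facet $J'\supseteq I$ has $b(J')\in F$. Saying that flips move $b(J)$ by nonnegative multiples of roots and that one ``controls which flips preserve membership via the sign of $\langle c, r(J,i)\rangle$'' is not yet an argument, because a priori the sign data changes as you flip. The paper closes this with three specific ingredients (Lemma~\ref{ifflemma}): (i) the invariant that $r(J_k,j)\in V_F$ holds exactly for $j\in J_k\setminus I$ is preserved along any flip at a position outside $I$, proved via Observation~\ref{rprop} (reflecting in a root of $V_F$ preserves membership in $V_F$ and non-membership as well); (ii) connectedness of the flip graph of $\mathcal{SC}(Q_{[m]\setminus I},w)$, which supplies a flip path from $J$ to $J'$ avoiding $I$ and keeping all brick vectors in $\widetilde V_F$; and (iii) for the converse direction, the unique $\eta$-minimal vertex of $F$ (Lemma~\ref{minimalelement}) together with the local cone property, which forces the coefficients of roots outside $V_F$ to vanish and yields a decreasing flip sequence inside $\widetilde V_F$. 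Without (i) and (ii) in particular, your sketch does not rule out that some facet containing $I$ has its brick vector off the face, which is exactly the point you identify as the main obstacle.
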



This proposition states that every face of a brick polyhedron $\mathcal{B}(Q, w)$ is a modified brick polyhedron $\mathcal{B}^I(Q, w)$; however, we remark that not every $\mathcal{B}^I(Q, w)$ is a face of~$\mathcal{B}(Q, w)$, as shown in Example~\ref{example3d}. The Proposition was essentially proved in \cite{bpolyhedra} using the description of faces via linear functionals, see~\cite[Remark~4.11]{bpolyhedra} and~\cite[Corollary~3.24 and Proposition~4.6]{bpolyhedra}. Our approach is a bit different and is based on~\Cref{faceprop}, cf. \cite[Proposition~4.6 and Remark~4.11]{bpolyhedra}.

For a face $F$ of $\mathcal{B}(Q,w)$, we denote by 
\[
V_F:=\operatorname{span} 
\{ p-q \mid p,q\in F\}
\] 
the vector space spanned by~$F$,
and by 
\[
\widetilde{V}_F:=
F+V_F= 
\{ p+v \mid p\in F, v\in V_F\}
\] 
the smallest affine space containing $F$.

It is essential to develop certain ideas before we can prove Proposition~\ref{facesall}.

\subsubsection{The minimal element and characterization of vertices in a face $F$}

First, we demonstrate that each face $F$ of the brick polyhedron $\mathcal{B}(Q, w)$ has a ``minimal element''. To accomplish this, let $\eta \in V$ be a vector such that $\langle \eta, \alpha \rangle > 0$ for $\alpha \in \Phi^+$ and $\langle \eta, \alpha \rangle < 0$ for $\alpha \in \Phi^-$. This vector $\eta$ can be regarded as a linear functional $\eta: V \rightarrow \mathbb{R}$ defined by $x \mapsto \langle \eta, x \rangle$, satisfying~$\eta(\alpha) \neq 0$ for all $\alpha \in \Phi$.

\begin{lemma}[Minimal element of a face] \label{minimalelement}
For every face $F$ of the brick polyhedron~$\mathcal{B}(Q,w)$, there exists a unique vertex $b(J_{F,\min}) \in F$, corresponding to some facet $J_{F,\min}$, that minimizes the linear functional $\eta$.
\end{lemma}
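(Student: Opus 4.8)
The plan is to minimize $\eta$ over $F$ and show that the minimizing face collapses to a single brick-vector vertex. First I would identify the recession cone of $\mathcal{B}(Q,w)$. Since $\mathcal{B}(Q,w)$ is by definition the Minkowski sum of the polytope $\operatorname{conv}\{b(I)\}$ with the Bruhat cone $\mathcal{C}^+(w,\operatorname{Dem}(Q))$, its recession cone is exactly $\mathcal{C}^+(w,\operatorname{Dem}(Q))$. This cone is generated by positive roots, and by the defining property of $\eta$ we have $\eta(\beta)>0$ for every $\beta\in\Phi^+$; hence $\eta(v)>0$ for every nonzero $v$ in the recession cone. In particular the recession cone is pointed, so $\mathcal{B}(Q,w)$ has vertices, and each such vertex, being an extreme point of the Minkowski sum with a pointed cone, is an extreme point of $\operatorname{conv}\{b(I)\}$, i.e. a brick vector $b(I)$ of some facet $I$.

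\emph{Existence.} Let $F$ be a face of $\mathcal{B}(Q,w)$; it is a closed polyhedron whose recession cone is contained in $\mathcal{C}^+(w,\operatorname{Dem}(Q))$. Because $\eta\ge 0$ there, $\eta$ is bounded below on $F$ and attains its minimum on a nonempty face $F_{\min}\subseteq F$, which is again a face of $\mathcal{B}(Q,w)$. Any recession direction $v$ of $F_{\min}$ lies in $\mathcal{C}^+(w,\operatorname{Dem}(Q))$ and must satisfy $\eta(v)=0$, since $\eta$ is constant on $F_{\min}$; as $\eta>0$ off the origin on this cone, this forces $v=0$. Thus $F_{\min}$ is a bounded, nonempty polytope, so it has at least one vertex; being a $0$-dimensional face of $\mathcal{B}(Q,w)$, that vertex is a brick vector $b(J_{F,\min})$ for some facet $J_{F,\min}$.

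\emph{Uniqueness.} Suppose $F_{\min}$ contained two distinct vertices. Then, as a polytope of positive dimension, $F_{\min}$ would contain an edge $e$, which is also an edge of $\mathcal{B}(Q,w)$. Here I would invoke the structural fact that every edge of the brick polyhedron is parallel to a root: edges correspond to flips between adjacent facets $I,I'$, and the difference $b(I)-b(I')$ is a nonzero multiple of the root $r(I,i)$ attached to the flipped position, which follows from the root and weight functions defined above (cf. \cite{bpolyhedra,bpofssc}). Since $\eta$ is constant on $F_{\min}$, it is constant on $e$, so it would vanish on this root direction $\alpha$; but $\eta(\alpha)\neq 0$ for every $\alpha\in\Phi$ by the choice of $\eta$. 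This contradiction shows $F_{\min}$ has a single vertex, whence $F_{\min}=\{b(J_{F,\min})\}$ and the minimizer is unique.

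\emph{Main obstacle.} The existence half is routine once the recession cone is identified. The crux is the uniqueness step, whose essential input is that edges of $\mathcal{B}(Q,w)$ are root-parallel, combined with the genericity $\eta(\alpha)\neq 0$. I would either cite this edge description from \cite{bpolyhedra} or re-derive it from the flip behaviour of the weight function, verifying that a single flip alters only the relevant weights so that the brick-vector difference is a root multiple. A secondary point to check carefully is that $F_{\min}$ is genuinely bounded, i.e. that no recession direction survives; this is exactly where the strict positivity of $\eta$ on the Bruhat cone minus the origin, rather than mere non-negativity, is used.
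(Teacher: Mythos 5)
Your proposal is correct and follows essentially the same route as the paper: the core of both arguments is that a positive-dimensional minimizing sub-face would contain an edge in a root direction $\alpha$, contradicting $\eta(\alpha)\neq 0$. You are somewhat more careful than the paper in justifying existence (identifying the recession cone as the Bruhat cone and using $\eta>0$ on positive roots to get boundedness below and boundedness of $F_{\min}$), but this is a refinement of the same argument rather than a different approach.
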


\begin{proof}
Let $F_\eta$ be the sub-face of $F$ that minimizes the linear functional $\eta$. We aim to demonstrate that $F_\eta$ consists of only one point. Otherwise, $F_\eta$ would contain a (possibly unbounded) edge in the direction of $\alpha$ for some $\alpha \in \Phi$. 

Let $p$ and $q$ be two points on this edge such that $q = p + \alpha$ for some $\alpha \in \Phi$. Then, $\langle \eta, q \rangle = \langle \eta, p \rangle + \langle \eta, \alpha \rangle$. Since $p$ and $q$ are minimizing $\eta$, it follows that $\langle \eta, \alpha \rangle = 0$, contradicting $\langle \eta, \alpha \rangle \neq 0$ for all $\alpha \in \Phi$.
\end{proof}

Our second step is to characterize the vertices within a face $F$ of the brick polyhedron~$\mathcal{B}(Q, w)$. 
The following Observation \ref{rprop} is a direct consequence of the definition of the reflection $r_\beta(\alpha) = \alpha - 2 \frac{\langle \alpha, \beta \rangle}{\langle \beta, \beta \rangle} \beta$ along the hyperplane orthogonal to the root $\beta$.

\begin{observation}\label{rprop}
Let $U$ be a finite-dimensional subspace of a vector space.
\begin{itemize}
    \item If $\alpha, \beta \in U$ then $r_\alpha(\beta)\in U $.
    \item If $\alpha \notin U$ and $\beta \in U$, then $r_\alpha(\beta) \not\in U $.
\end{itemize}

\end{observation}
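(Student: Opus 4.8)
The plan is to argue directly from the reflection formula, using nothing more than the fact that $U$, being a subspace, is closed under linear combinations. Writing $r_\alpha(\beta) = \beta - 2\frac{\langle \beta,\alpha\rangle}{\langle\alpha,\alpha\rangle}\,\alpha$, both claims reduce to bookkeeping about which of $\alpha,\beta$ lie in $U$.

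For the first item I would simply observe that $r_\alpha(\beta)$ is, by its very definition, a linear combination of $\alpha$ and $\beta$. Since $\alpha,\beta\in U$ and $U$ is closed under linear combinations, $r_\alpha(\beta)\in U$ is immediate, with no case analysis required.

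For the second item I would argue by contradiction. Set $c:=2\frac{\langle\beta,\alpha\rangle}{\langle\alpha,\alpha\rangle}$, so that $\beta-r_\alpha(\beta)=c\,\alpha$. If $r_\alpha(\beta)\in U$, then combining this with $\beta\in U$ and closure of $U$ yields $c\,\alpha=\beta-r_\alpha(\beta)\in U$; whenever $c\neq 0$ we may scale by $c^{-1}$ to obtain $\alpha\in U$, contradicting $\alpha\notin U$, and the claim follows.

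The single delicate point --- and the main obstacle --- is the degenerate case $c=0$, that is $\langle\beta,\alpha\rangle=0$: here $r_\alpha(\beta)=\beta\in U$, so the reflection fixes $\beta$ and the stated conclusion genuinely fails. To close this gap I would record the (mild) standing hypothesis that the reflecting root $\alpha$ is not orthogonal to $\beta$, equivalently that $r_\alpha$ actually moves $\beta$; this is precisely the situation in which the Observation is invoked when characterizing the vertices of a face $F$, where the relevant $\alpha$ is an edge direction along which $\beta$ is genuinely reflected. Under this proviso the second item is again the one-line computation above.
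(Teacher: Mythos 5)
The paper offers no proof of this Observation---it is simply asserted to be ``a direct consequence of the definition of the reflection''---so your one-line argument for the first item (a linear combination of two elements of $U$ lies in $U$) is exactly the intended reasoning, and finite-dimensionality is indeed irrelevant. The valuable part of your write-up is the second item: you are right that, as literally stated, it is \emph{false}. If $\langle\alpha,\beta\rangle=0$ then $r_\alpha(\beta)=\beta\in U$, and this degenerate case occurs for genuine roots in the paper's own setting: in type $A_3$ take $U=\operatorname{span}(e_1-e_2)$, $\alpha=e_3-e_4\notin U$, $\beta=e_1-e_2\in U$; these are orthogonal, so $r_\alpha(\beta)=\beta\in U$. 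So your counterexample exposes a real misstatement, not a pedantic edge case.

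Where I would push back is on your proposed repair. Adding the standing hypothesis $\langle\alpha,\beta\rangle\neq 0$ makes the printed statement true, but it is not the statement the paper actually uses. In the proof of Lemma~\ref{ifflemma}, the reflection applied is $s_\beta$ with reflecting root $\beta=r(J_0,j_0)\in V_F$, and the vector being reflected is $r(J_0,j)$, which for $j\in I$ lies \emph{outside} $V_F$. The configuration needed is therefore ``reflecting root in $U$, reflected vector not in $U$'', i.e.\ the second item should read: if $\alpha\in U$ and $\beta\notin U$, then $r_\alpha(\beta)\notin U$. That version is unconditionally true by your same computation---$r_\alpha(\beta)=\beta-c\alpha$ with $c\alpha\in U$, so $r_\alpha(\beta)\in U$ would force $\beta\in U$---with no case split on $c$ and no non-degeneracy proviso. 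In short: your proof is correct for the statement as posed, and your counterexample is genuine, but the clean fix is to swap the two membership conditions in the second item rather than to add an orthogonality hypothesis; with that correction the Observation covers precisely the two cases invoked later, each by the one-line argument you give.
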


\begin{lemma}[Characterization of the vertices in a face $F$]\label{ifflemma}
Let $F$ be a face of the brick polyhedron $\mathcal{B}(Q,w)$, and $J\in \mathcal{SC}(Q,w)$ be a facet such that the brick vector $b(J)\in F$. 
Define $J^F:=\{j \in J :$ r$(J,j) \in V_F\}$ and let~$I = J \setminus J^F$. For any facet $J' \in \mathcal{SC}(Q,w)$, we have the following equivalence:
\begin{align*}
I \subseteq J' \quad \text{if and only if} \quad b(J') \in \widetilde{V}_F.
\end{align*}
\end{lemma}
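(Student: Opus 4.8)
The plan is to prove the biconditional by unwinding what the two conditions mean geometrically and connecting them through the brick vectors and the root function. Recall that $b(J') = b(J) - \sum_{k}(\omega(J',k)-\omega(J,k))$, and that the difference between two brick vectors associated to facets differing by a single flip is (up to sign) a root. More precisely, the structure of subword complexes tells us that moving from one facet to an adjacent one by flipping out $j$ and flipping in $j'$ changes the brick vector by a multiple of the root $r(J,j)$. So I would first record the fundamental fact that $b(J')-b(J)$ lies in the span of the roots $\{r(J,j): j\in J\setminus J'\}$, i.e. the roots indexed by the elements of $J$ that are NOT in $J'$. This is the engine of the whole argument.

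\medskip

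\textbf{The forward direction ($I\subseteq J'\Rightarrow b(J')\in\widetilde V_F$).} Suppose $I\subseteq J'$. Since $I=J\setminus J^F$, every index in $J$ that is missing from $J'$ must lie in $J^F$; that is, $J\setminus J'\subseteq J^F$. By definition of $J^F$, for each such index $j$ we have $r(J,j)\in V_F$. Now I would express $b(J')-b(J)$ as a combination of brick-vector differences along a path of flips from $J$ to $J'$ inside the subword complex, and argue that each step moves the brick vector in the direction of a root $r(\cdot,j)$ with $j$ ranging over indices in $J\setminus J'\subseteq J^F$. Using Observation~\ref{rprop} (the closure of $V_F$ under reflections by roots it contains) to control how the relevant roots transform as the base facet changes along the path, I conclude that every incremental displacement stays inside $V_F$. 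Hence $b(J')-b(J)\in V_F$, and since $b(J)\in F\subseteq\widetilde V_F$, we get $b(J')\in\widetilde V_F$.

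\medskip

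\textbf{The reverse direction ($b(J')\in\widetilde V_F\Rightarrow I\subseteq J'$).} For the contrapositive, suppose $I\not\subseteq J'$, so there is an index $j\in I=J\setminus J^F$ with $j\notin J'$. By definition of $J^F$, this means $r(J,j)\notin V_F$. The goal is to exhibit a ``direction'' in which $b(J')$ escapes $\widetilde V_F$. The natural tool is the linear functional $\eta$ from Lemma~\ref{minimalelement}, or more directly a linear functional that vanishes on $V_F$ but is nonzero on $r(J,j)$: since $r(J,j)\notin V_F$ such a functional exists. I would then show that the component of $b(J')-b(J)$ transverse to $V_F$ is nonzero, again using the decomposition of the brick-vector difference into root directions together with the second bullet of Observation~\ref{rprop} to guarantee that the offending root direction $r(J,j)$ does not get cancelled or absorbed into $V_F$ by the intervening reflections. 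This forces $b(J')-b(J)\notin V_F$, hence $b(J')\notin\widetilde V_F$.

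\medskip

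\textbf{Main obstacle.} The delicate point is the reverse direction: controlling the transverse component. When passing from $J$ to $J'$ through a sequence of flips, the root attached to a fixed position changes by reflections, so I must argue that the contribution of the ``bad'' index $j$ (with $r(J,j)\notin V_F$) cannot be conspiratorially cancelled by contributions of other indices that happen to lie in $V_F$. Observation~\ref{rprop} is designed precisely for this: reflecting a vector outside $V_F$ by roots lying in $V_F$ keeps it outside $V_F$. The care needed is in choosing a good flip sequence and tracking which roots are in $V_F$ at each stage, so that the transverse part accumulates coherently (no sign cancellation against itself) rather than vanishing. I would likely organize this by induction on the number of indices in $J\setminus J'$, peeling off one flip at a time and invoking the observation to certify that membership in $V_F$ is preserved exactly for the $J^F$-indices and violated for the complementary ones.
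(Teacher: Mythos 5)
Your forward direction is essentially the paper's argument: choose a flip path from $J$ to $J'$ that never flips an element of $I$ (connectedness of the flip graph of $\mathcal{SC}(Q_{[m]\setminus I},w)$), use the flip formula to see that each step displaces the brick vector by a root of the flipped position, and propagate the dichotomy ``roots of $I$-positions lie outside $V_F$, roots of the other positions lie inside $V_F$'' along the path via Observation~\ref{rprop}. One imprecision: your ``fundamental fact'' that $b(J')-b(J)$ lies in the span of $\{r(J,j):j\in J\setminus J'\}$ is neither justified nor what is actually needed; the displacements are roots $r(J_k,j_k)$ of \emph{intermediate} facets, and what keeps them in $V_F$ is the inductively maintained dichotomy, not membership of the flipped index in $J\setminus J'$.

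The genuine gap is in the reverse direction. You argue by contrapositive and want to show that the component of $b(J')-b(J)$ transverse to $V_F$ is nonzero, hoping that Observation~\ref{rprop} prevents ``conspiratorial cancellation.'' It does not. Once the flip path performs a flip at a position of $I$, the reflecting root is \emph{not} in $V_F$, and Observation~\ref{rprop} gives no control at all over how subsequent roots move relative to $V_F$ (reflecting by a root outside $V_F$ can send vectors into or out of $V_F$ arbitrarily). More fundamentally, a sum of several vectors each lying outside $V_F$ can perfectly well lie in $V_F$; you name this as the delicate point but supply no mechanism to exclude it. The paper's proof avoids this entirely by injecting convexity: the local cone property \cite[Theorem 4.4]{bpolyhedra} writes $b(J_{F,\min})-b(J')=\sum_{j'\in J'}a_{j'}\,r(J',j')$ with $a_{j'}\ge 0$, and pairing with a supporting functional $\eta_F$ of the face $F$ (which vanishes on $V_F$ and is strictly positive on the remaining roots of the configuration) forces $a_{j'}=0$ for every root outside $V_F$ --- cancellation is impossible because all terms have the same sign. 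From this one extracts a decreasing flip path from $J'$ down to $J_{F,\min}$ using only roots in $V_F$, concatenates it with the analogous path from $J$, and concludes $I\subseteq J'$ from the forward-direction dichotomy. Without some such positivity input (the nonnegative cone coefficients together with a supporting hyperplane of $F$), your transversality argument cannot be completed.
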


\begin{proof}
$\Rightarrow$: Assume $I \subseteq J'$. Note that there exists a sequence of flips $J = J_0 \overset{j_0}{\rightarrow} J_1 \overset{j_1}{\rightarrow} \ldots \overset{j_{\ell-1}}{\rightarrow} J_\ell = J'$ that never flips an element in $I$. This is due to the connectedness of the flip graph of a subword complex, especially for $\mathcal{SC}(Q_{[m]\setminus I},w)$, where $Q$ is of length $m$.

Now the following holds: 
\begin{enumerate}
\item r$(J_k,j) \notin V_F \text{ if } j\in  I$  \label{zwei}
\item r$(J_k,j) \in V_F \text{ if } j\in J_k \setminus I$ \label{eins}
\end{enumerate}

To prove this, it is enough to examine only one flip. Without loss of generality, take the first flip. Let $J_1=J_0\setminus \{j_0\} \cup \{j_0'\}$. 
By \cite[Lemma 3.3 (2)]{bpofssc}, we have 
\begin{empheq}[left={r(J_1,j)=\empheqlbrace}]{align*}
    & s_\beta(r(J_0,j))                  && \text{   if}\ \text{min}(j_0,j_0')<j \leq max(j_0,j_0') \\
    & r(J_0,j)                && \text{   otherwise}
  \end{empheq}

  for $\beta:= $r$(J_0,j_0)$. Since $\beta  \in V_F$, Observation \ref{rprop} implies properties (1) and (2) hold for $r(J_1,j)$.
  Now, note that $b(J_1) = b(J_0) + c_{0,1}$r$(J_0,j_0)$, for some constant $c_{0,1}$. Since r$(J_0,j_0) \in V_F$, we have $b(J_1) \in \widetilde{V}_F$. 
  
  Applying the same argument several times for the rotations in the sequence yields $b(J_\ell)= b(J') \in \widetilde{V}_F$.

$\Leftarrow$: Let $q = b(J') \in \widetilde{V}_F$ and $b(J_{F, \text{min}})$ be the minimal element by Lemma \ref{minimalelement}. We will show that $J'$ can be connected to $J_{F, \text{min}}$ by a sequence of decreasing flips $J' = J'_0 \overset{j'_0}{\rightarrow} J'_1 \overset{j'_1}{\rightarrow} \ldots \overset{j'_{\ell-1}}{\rightarrow} J'_{\ell'} = J_{F, \text{min}}$, with $r(J'_k, j'_k) \in V_F$ for all $1 \leq k < \ell'$. To demonstrate this, we provide a construction of a flip-sequence. Let $p_{\text{min}} := b(J_{F, \text{min}})$. By the local cone property \cite[Theorem 4.4]{bpolyhedra}, 

\begin{align*}
p_{\text{min}} - q=
b(J_{F, \text{min}}) - b(J')=
\sum_{j'\in J'} a_{j'} r(J', j') 
\in V_F,
\end{align*}
where $a_{j'} \geq 0$ for all $j' \in J'$. Let $\eta_F$ be a linear functional that is minimal at the face~$F$. Taking the inner product with $\eta_F$, we obtain:
\begin{align*}
0 = \sum_{j' \in J'} a_{j'} \langle r(J', j'), \eta_F \rangle 
\end{align*}
but $\langle r(J', j'), \eta_F \rangle =0$ if $r(J', j')\in V_F$ and $\langle r(J', j'), \eta_F \rangle >0$ otherwise. So, $a_{j'} = 0$ if~$r(J', j') \not\in V_F$. Therefore,
\begin{align*}
b(J_{F, \text{min}}) - b(J) = \sum_{\substack{j' \in J' \\ r(J', j') \in V_F}} a_{j'} r(J', j').
\end{align*}

\begin{figure}[h!]
    \centering
\includegraphics[width=0.3\textwidth]{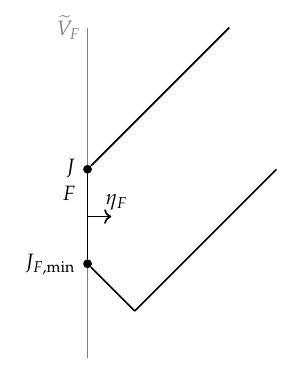}
    \caption{Normal vector $\eta_F$.}
\end{figure}

Since $J_{F, \text{min}}$ minimizes the linear functional $\eta$, the inner product with $\eta$ satisfies
\begin{align*}
\langle \eta, b(J_{F, \text{min}}) - b(J) \rangle < 0.
\end{align*}
If all $r(J', j') \in R(J) \cap V_F$ were positive roots, $\langle \eta, b(J_{F,\text{min}}) - b(J) \rangle > 0$ would hold. Thus, at least one $r(J', j')$ must be a negative root, $r(J', j_0') \in \Phi^-\cap V_F$. By \cite[Lema 3.3 (2)]{bpolyhedra} $j_0' \in J'$ is decreasingly flippable. Flipping $j_0'$ in $J$, we obtain a new facet $J_1'$
such that $b(J_1')\in \widetilde{V}_F$,
because $b(J_1')=b(J_0')+c_0r(J_0',j_0')$ for some positive constant $c_0$. 
If $J_1'=J_{F,\text{min}}$ then we are done. If not, we can repeat the process and find a sequence $J' \overset{j'_0}{\rightarrow} J'_1 \overset{j'_1}{\rightarrow} \ldots \overset{j'_{\ell'-1}}{\rightarrow} J_{F, \text{min}}$ such that $r(J'_k, j'_k) \in V_F$ for~$1 \leq k < \ell'$. 

The similar statement holds for the facet $J$. Thus, going through $J_{F, \text{min}}$, we can find a sequence $J = J_1 \overset{j_0}{\rightarrow} J_1 \overset{j_1}{\rightarrow} \ldots \overset{j_{\ell-1}}{\rightarrow} J_\ell = J'$ such that $r(J_k, j_k) \in V_F$ for $1 \leq k < \ell$. 
By properties \eqref{zwei} and \eqref{eins}, we are never flipping an element in~$I$ such that $j_k \notin I$.  Therefore, $I \subseteq J'$ holds. 
\end{proof}

\subsubsection{Proof of \Cref{facesall}}
Now are ready to prove~\Cref{facesall}. It is a direct consequence of the following more explicit result, cf.~\cite[Proposition 4.6 and Remark 4.11]{bpolyhedra}.

\begin{proposition}\label{faceprop}
Consider a face $F$ of the brick polyhedron $\mathcal{B}(Q,w)$, let $J \in \mathcal{SC}(Q,w)$ be a facet such that the brick vector $b(J)\in F$ and $J^F:=\{j \in J :$ r$(J,j) \in V_F\}$. Then, the following hold:
\begin{enumerate}
\item for $I= J \setminus J^F$, we have $F=\mathcal{B}^I(Q,w)$ and \label{p1}
\item for $J' \in \mathcal{SC}(Q,w)$ a facet, we have $I \subseteq J'$ if and only if $b(J') \in F$\label{p2}.
\end{enumerate}
\end{proposition}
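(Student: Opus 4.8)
The plan is to establish part \ref{p2} first and then deduce part \ref{p1} from it. Statement \ref{p2} is a strengthening of \Cref{ifflemma}: that lemma already gives $I\subseteq J'$ if and only if $b(J')\in\widetilde{V}_F$, so all that remains is to upgrade membership in the affine hull $\widetilde{V}_F$ to membership in the face $F$ itself. The mechanism is that $F$, being a face, is cut out by a supporting hyperplane: writing $\eta_F$ for a linear functional minimized on $F$ with minimum value $c$, we have $F=\mathcal{B}(Q,w)\cap H$ where $H=\{x:\langle\eta_F,x\rangle=c\}$, and since every point of $F$ lies in $H$ its affine hull satisfies $\widetilde{V}_F\subseteq H$.

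With this in hand part \ref{p2} is short. If $b(J')\in F$ then $b(J')\in\widetilde{V}_F$, so $I\subseteq J'$ by \Cref{ifflemma}. Conversely, if $I\subseteq J'$ then \Cref{ifflemma} gives $b(J')\in\widetilde{V}_F\subseteq H$, hence $\langle\eta_F,b(J')\rangle=c$; as $b(J')$ is in any case a point of $\mathcal{B}(Q,w)$, it attains the minimum of $\eta_F$ on $\mathcal{B}(Q,w)$ and therefore lies in $F=\mathcal{B}(Q,w)\cap H$.

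To deduce \ref{p1}, I would use \ref{p2} to identify the two pieces of a Minkowski decomposition of $F$. By \ref{p2} the brick vectors lying in $F$ are exactly $\{b(J'):I\subseteq J'\}$, and since the vertices of $F$ are vertices of $\mathcal{B}(Q,w)$ and hence brick vectors, $\operatorname{conv}\{b(J'):I\subseteq J'\}$ is precisely the convex hull of the vertices of $F$. Using the decomposition $F=\operatorname{conv}(\operatorname{Vert} F)+\operatorname{rec}(F)$ of a face into its vertex hull plus its recession cone, the desired identity $F=\mathcal{B}^I(Q,w)=\operatorname{conv}\{b(J'):I\subseteq J'\}+\mathcal{C}^{I,+}$ reduces to the single claim $\operatorname{rec}(F)=\mathcal{C}^{I,+}$. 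A standard fact gives $\operatorname{rec}(F)=\mathcal{C}^+\cap V_F$, since $\operatorname{rec}(\mathcal{B}(Q,w))=\mathcal{C}^+$ and $V_F\subseteq\eta_F^\perp$. The inclusion $\operatorname{rec}(F)\subseteq\mathcal{C}^{I,+}$ is then obtained for each facet $J'\supseteq I$ by writing $v\in\mathcal{C}^+\cap V_F$ as $v=\sum_{j\in J'}c_j\,r(J',j)$ with $c_j\ge 0$ and pairing with $\eta_F$: the terms with $j\in J'\setminus I$ vanish, as their roots lie in $V_F$, while those with $j\in I$ satisfy $\langle\eta_F,r(J',j)\rangle>0$ (their roots are not in $V_F$), forcing $c_j=0$ for $j\in I$ and hence $v\in\operatorname{cone} R^I(J')$.

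The main obstacle is the reverse inclusion $\mathcal{C}^{I,+}\subseteq\operatorname{rec}(F)=\mathcal{C}^+\cap V_F$. Membership in $V_F$ is immediate since $R^I(J')\subseteq V_F$, so the genuine content is $\mathcal{C}^{I,+}\subseteq\mathcal{C}^+$. This cannot hold for an arbitrary face $I$ of the subword complex — \Cref{example3d} exhibits an $I_3$ for which $\mathcal{B}^{I_3}(Q_\nu,w_\nu)$ has rays escaping $\mathcal{B}(Q_\nu,w_\nu)$ — so the argument must genuinely exploit that $F$ is a face. My plan is to fix $v\in\mathcal{C}^{I,+}$ and track the ray $b(J_{F,\min})+tv$ for $t\ge 0$, where $J_{F,\min}$ is the minimal facet of \Cref{minimalelement}, which satisfies $I\subseteq J_{F,\min}$ by \ref{p2}. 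Since $v\in V_F$ the ray stays inside $\widetilde{V}_F\subseteq H$, so it remains in $F=\mathcal{B}(Q,w)\cap H$ for exactly as long as it remains in $\mathcal{B}(Q,w)$; and because every vertex of $F$ is a brick vector $b(J')$ with $I\subseteq J'$, the local cone property \cite[Theorem 4.4]{bpolyhedra} makes $b(J')+\operatorname{cone}R(J')$ its tangent cone, in which $v\in\operatorname{cone}R^I(J')\subseteq\operatorname{cone}R(J')$ lies, so the ray can never cross the boundary. Making this last step rigorous amounts to the ``recession cone equals intersection of vertex tangent cones'' principle applied to $F$ — the analogue for $F$ of the computation $\mathcal{C}^+=\bigcap_{J}\operatorname{cone}R(J)$ — and this is the step I expect to require the most care.
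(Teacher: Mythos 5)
Your proposal is correct, and its two halves sit differently relative to the paper's own proof. For part (2) you follow the same route --- everything rests on \Cref{ifflemma} --- but you make explicit a step the paper elides: the lemma only yields $b(J')\in\widetilde{V}_F$, and your observation that $\widetilde{V}_F$ is contained in the supporting hyperplane $H$ of $F$, so that any point of $\mathcal{B}(Q,w)$ lying in $\widetilde{V}_F$ automatically lies in $F=\mathcal{B}(Q,w)\cap H$, is precisely the needed bridge. For part (1) you use a genuinely different decomposition. The paper argues locally: the vertices of $F$ are the $b(J')$ with $I\subseteq J'$, the tangent cone of $F$ at each such vertex is the tangent cone of $\mathcal{B}(Q,w)$ there intersected with $\widetilde{V}_F$, and this matches the local cone of $\mathcal{B}^I(Q,w)$ (outsourced to \cite[Remark~4.11]{bpolyhedra}); equality of pointed polyhedra with the same vertices and vertex cones then gives $F=\mathcal{B}^I(Q,w)$. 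You instead match the two summands of the Motzkin decomposition $F=\operatorname{conv}(\operatorname{Vert}F)+\operatorname{rec}(F)$ against the two summands defining $\mathcal{B}^I(Q,w)$, reducing everything to the single identity $\operatorname{rec}(F)=\mathcal{C}^{I,+}$. Both of your inclusions are sound: $\operatorname{rec}(F)=\mathcal{C}^+\cap V_F\subseteq\operatorname{cone}R^I(J')$ follows from pairing with $\eta_F$ together with the facts, established inside the proof of \Cref{ifflemma}, that $r(J',j)\in V_F$ exactly for $j\in J'\setminus I$; and the reverse inclusion --- the step you flag as delicate --- does close via the standard fact that the recession cone of a pointed polyhedron equals the intersection of its vertex tangent cones, applied to $F$ with $T_{b(J')}F=\operatorname{cone}R(J')\cap\eta_F^{\perp}=\operatorname{cone}R^I(J')\supseteq\mathcal{C}^{I,+}$. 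What your route buys is a direct proof that $\mathcal{C}^{I,+}$, a priori an intersection over only the facets containing $I$, really is the recession cone of $F$ (correctly isolating why this fails for faces $I$ of the subword complex that do not arise this way, as in \Cref{example3d}); what the paper's route buys is brevity, at the cost of leaving both the $\widetilde{V}_F$-to-$F$ upgrade and the local structure of $\mathcal{B}^I(Q,w)$ implicit. Both arguments consume the same inputs in the end: \Cref{ifflemma}, \Cref{minimalelement}, and the local cone property of \cite{bpolyhedra}.
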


\begin{proof}
By Lemma \ref{ifflemma}, $I \subseteq J'$ if and only if $b(J')\in F$, so \eqref{p2} follows. The vertices of~$F$ are the brick vectors $b(J')$ for $I \subseteq J'$, and the local cone at $b(J')$ inside $F$ is the intersection of the local cone of $b(J')$ in the brick polyhedron $\mathcal{B}(Q,w)$ with $\widetilde{V}_F$. This is precisely the local cone at $b(J')$ in the modified brick polyhedron $\mathcal{B}^I(Q,w)$. Thus, $F=\mathcal{B}^I(Q,w)$.
\end{proof}

 \subsection{Some faces of $\nu$-brick polyhedra}
In this subsection, we will outline some conditions under which the modified brick polyhedron $\mathcal{B}^I(Q_\nu, w_\nu)$ is a face of the $\nu$-brick polyhedron $\mathcal{B}(Q_\nu, w_\nu)$. The following notation will be used throughout our discussion.

\begin{definition}\label{conee}
For a $\nu$-tree $T$ we denote by $\beta_t:=r(T,t) = e_i - e_j$ the root associated to a node $t\in T$. We label $t$ by $ij$ for convenience and define the cone 
$$C(T) := \{ x \in V \mid \langle x, \beta_t \rangle > 0 \text{ for all } t \in T \}.$$
Given a subset of nodes~$M \subseteq T$ we denote the $\nu$-tree with $M$ marked by $T_M$, and define 
$$C(T_M) := \{x \in V \mid \langle x, \beta_t \rangle > 0 \text{ for all } t \in T \setminus M \text{ and } \langle x, \beta_t \rangle = 0 \text{ for all } t \in M \}.$$ 
An example is shown in Figure~\ref{markedex}.
\end{definition}
\begin{figure}[h!]
\centering
\includegraphics[width=\textwidth]{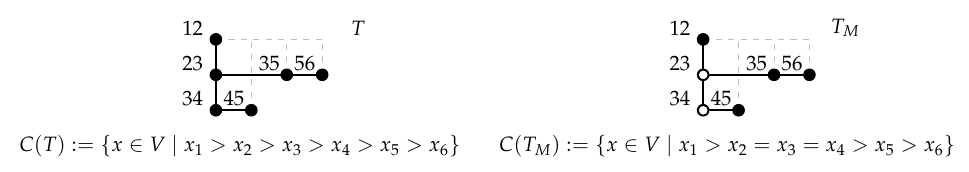}
\caption{Left: $\nu$-tree $T$, Right: $\nu$-tree with marked points $T_M$.}
\label{markedex}
\end{figure}
\begin{remark}\label{rema}
The result \cite[Theorem 4.17]{lofap} asserts that all the facets (pipe dreams) of the subword complex associated with the $\nu$-Tamari lattice are acyclic. This condition is equivalent to the system of inequalities having a solution. 
Indeed, the cone~$C(T)$ is non-empty if and only if the contact graph of the pipe dream $P(T)$ is acyclic. This acyclic property holds for all $\nu$-trees.
\end{remark}

\begin{lemma}\label{exlf}
Let $I=T \setminus A$, where $T$ is a $\nu$-tree and $A\subseteq T$ a subset of ascents, and let $\beta_t:=r(T,t)$ for $t \in T$. There exists a linear functional $f$ such that 
\begin{align}\label{ee}
f(\beta_a)=0\text{ for }a \in A\text{ and }f(\beta_t)>0\text{ for }t \in T \setminus A.
\end{align}
\end{lemma}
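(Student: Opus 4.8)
The plan is to recognize that producing $f$ is exactly the assertion that the \emph{marked cone} $C(T_A)$ of \Cref{conee} is non-empty. Viewing the linear functional $f$ as a vector with $f(\beta)=\langle f,\beta\rangle$, the requirement \eqref{ee} says precisely that $f\in C(T_A)$, with $A$ in the role of the marked set $M$. So the lemma is the statement that the cone obtained from $C(T)$ by intersecting with the hyperplanes $\beta_a^{\perp}$ ($a\in A$) is non-empty and meets the relative interiors coming from the remaining roots. The base case $A=\varnothing$ is already available: by \Cref{rema} every $\nu$-tree is acyclic, hence $C(T)\neq\varnothing$, and one may even take $f$ in the open cone. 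Thus the entire content is to show that imposing the equalities $\langle f,\beta_a\rangle=0$ for the ascents $a\in A$ does not destroy strict feasibility of the remaining inequalities.

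First I would reduce this to a purely combinatorial feasibility question on the pipes. Writing $\beta_t=e_{i_t}-e_{j_t}$ as in \Cref{conee}, system \eqref{ee} is a system of difference constraints on coordinates $f_1,\dots,f_{n+1}$ indexed by the pipes: $f_{i_t}>f_{j_t}$ for $t\in T\setminus A$ and $f_{i_a}=f_{j_a}$ for $a\in A$ (translation invariance makes the normalization $\sum_k f_k=0$ harmless). Such a mixed system is feasible if and only if, after contracting each equality edge $\{i_a,j_a\}$, the directed graph with an arc $i_t\to j_t$ for every $t\in T\setminus A$ has no directed cycle and no arc whose two endpoints were identified by the contraction. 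For $A=\varnothing$ this recovers exactly the acyclicity of the contact graph of $P(T)$ from \Cref{rema}, which is reassuring and fixes the base case.

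Then I would run an induction on $|A|$, peeling off one ascent $a_0\in A$ at a time and using the right rotation at $a_0$ (\Cref{ascent}, \Cref{rotation}). The rotation produces a $\nu$-tree $T'=T\setminus\{a_0\}\cup\{a_0'\}$, and the flip formula \cite[Lemma~3.3(2)]{bpofssc}, already invoked in the proof of \Cref{ifflemma}, describes how the roots transform: by the reflection $s_{\beta_{a_0}}$ on a contiguous block of positions and trivially elsewhere. Geometrically $\beta_{a_0}$ spans a genuine wall of $\overline{C(T)}$, since the flip reverses the sign of this root and places $C(T)$ and $C(T')$ on opposite sides of $\beta_{a_0}^{\perp}$; hence the hyperplane $\langle\,\cdot\,,\beta_{a_0}\rangle=0$ can be imposed while the relative interior of the resulting wall keeps all other $T$-inequalities strict. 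The reflection formula then transports a functional feasible for $(T',A\setminus\{a_0\})$, furnished by the inductive hypothesis, into one feasible for $(T,A)$.

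The hard part will be the compatibility of the different ascents: imposing $\langle\,\cdot\,,\beta_a\rangle=0$ for \emph{all} $a\in A$ simultaneously must not force any further root to vanish. By Gordan's theorem of the alternative this is equivalent to $\operatorname{span}\{\beta_a:a\in A\}\cap\operatorname{cone}\{\beta_t:t\in T\setminus A\}=\{0\}$, and it genuinely uses that the $\beta_a$ come from ascents — for an arbitrary facet-defining root (one lying in the relative interior of the others) feasibility would already fail. In the graph picture this is the statement that contracting \emph{all} ascent edges at once keeps the directed contact graph acyclic and creates no contracted self-loop. I expect this to be the crux of the argument, and I would establish it from the local analysis of the rotation in \Cref{rotation} — showing each ascent edge is contractible without closing a directed cycle — together with the acyclicity of \Cref{rema} as the inductive base. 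As a consistency check, the resulting face $I=T\setminus A$ is exactly an interior face of the $\nu$-Tamari complex, matching the characterization recalled before \Cref{defasso}.
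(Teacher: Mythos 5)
Your setup is right -- the lemma is exactly the non-emptiness of the marked cone $C(T_A)$, the base case is the acyclicity of \Cref{rema}, and an induction on $|A|$ that peels off one ascent and rotates to $T'=T\setminus\{a_0\}\cup\{a_0'\}$ is indeed the paper's strategy. But the two steps you flag as the content of the proof are left unproven, and the mechanism you propose for the single-ascent step does not work as stated. You say the reflection formula ``transports a functional feasible for $(T',A\setminus\{a_0\})$ into one feasible for $(T,A)$,'' but if $f'$ is feasible for $C(T'_{A\setminus\{a_0\}})$ then $\langle s_{\beta_{a_0}}f',\beta_{a_0}\rangle=-\langle f',\beta_{a_0}\rangle$, which is nonzero unless $f'$ already lay on the hyperplane; reflecting does not land you on $\beta_{a_0}^{\perp}$, so it cannot produce the required equality $f(\beta_{a_0})=0$. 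The assertion that $\beta_{a_0}^{\perp}$ is a genuine wall of $\overline{C(T)}$ whose relative interior keeps all other inequalities strict is precisely what must be proved, and you defer it (``I expect this to be the crux \dots I would establish it from the local analysis'') rather than establish it.

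The paper closes this gap with two ingredients you are missing. First, it peels off not an arbitrary ascent but the \emph{northeast-most} one $\bar a\in A$; this guarantees that $\bar A=A\setminus\{\bar a\}$ is a set of ascents of \emph{both} $T$ and the rotated tree $T'$, so the induction hypothesis applies to the two pairs $(T,\bar A)$ and $(T',\bar A)$ simultaneously. Second, instead of transporting a single functional by a reflection, it takes points $p\in C(T_{\bar A})$ and $p'\in C(T'_{\bar A})$ and observes that the two constraint systems agree except on four pipes $i_1,i_2,i_3,i_4$, where they read $x_{i_1}>x_{i_2}>x_{i_3}>x_{i_4}$ and $x_{i_1}>x_{i_3}>x_{i_2}>x_{i_4}$ respectively. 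Both cones sit inside the convex relaxation $\bar D$ (common constraints plus $x_{i_1}>x_{i_2}$ and $x_{i_3}>x_{i_4}$, with no relation imposed between $x_{i_2}$ and $x_{i_3}$), so the segment from $p$ to $p'$ stays in $\bar D$, and the intermediate value theorem yields a point with $x_{i_2}=x_{i_3}$, i.e.\ a point of $C(T_A)$. This interpolation is what makes all the equalities for $A$ hold simultaneously without killing any strict inequality; your Gordan-type condition $\operatorname{span}\{\beta_a\}\cap\operatorname{cone}\{\beta_t\}=\{0\}$ is a correct reformulation of what is needed, but you never verify it, so as written the proposal does not constitute a proof.
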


The proof idea of Lemma \ref{exlf} relies on Definition~\ref{conee}, by showing that the cone $C(T_A)\neq\emptyset$, and is exemplified in Example \ref{exproofidea}.

\begin{example}\label{exproofidea}
Consider $\nu = EEN$. All three $\nu$-trees $T$, $T'$, $T''$ and the defining inequalities of their corresponding cones $C(T_1),C(T_2),C(T_3)$ are shown in Figure~\ref{smallex}.
Note that $C(T_1),C(T_2)$ and $C(T_3)$ are non-empty.

Figure~\ref{marked} (Left) shows a marked $\nu$-tree $T_A$ together with the defining equalities and inequalities of $C(T_A)$. Here $A=\{a\}$ consists of the unique ascent $a\in T$. The figure also shows the marked $\nu$-tree $T'_{A'}$ where $T\smallsetminus\{a\}=T'\smallsetminus\{a'\}$. 
Our strategy to prove that $C(T_A)\neq \emptyset$ is to pick two points $p\in C(T)$ and $p'\in C(T')$, and show that there is a point $q$ in the line segment connecting them such that $q\in C(T_A)$. 
\end{example}

\begin{figure}[h]
\centering
\includegraphics[width=\textwidth]{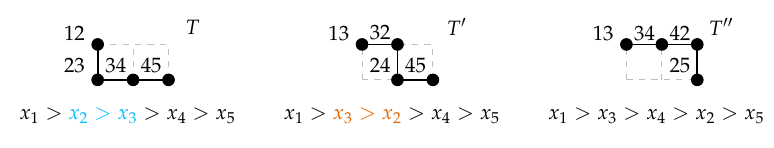}
    \caption{All three $\nu$-trees $T$, $T'$, $T''$ for $\nu = EEN$, and defining inequalities for $C(T)$, $C(T')$, $C(T'')$.}
    \label{smallex}
\end{figure}

\begin{figure}[h]
\centering
\includegraphics[width=\textwidth]{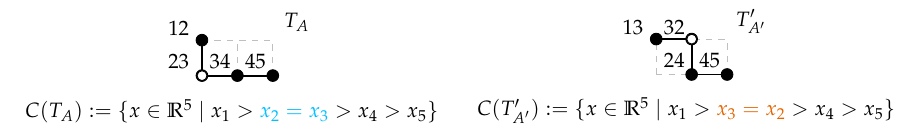}
\caption{Marked $\nu$-trees $T_A$ and $T'_{A'}$ for $\nu=EEN$.}
\label{marked}
\end{figure}

\begin{proof}[Proof of Lemma \ref{exlf}]
We proceed by induction on the number of marked ascents $k:=|A|$. The statement holds for $k = 0$ by \cite[Theorem 4.17]{lofap}, see Remark~\ref{rema}. Suppose $k \geq 1$ and that the statement holds for $k - 1$. Let $I=T\setminus A$, where $T$ is a $\nu$-tree and $A\subseteq T$ is a subset of ascents with $|A|=k$, and let $\beta_t=$$r$$(T,t)$, $t\in T$ as above.

Let $\bar{a}\in A$ be the northeast-most node in $A$ (i.e. no other node in $A$ is located northeast of $\bar{a}$), and let $T'=T\setminus \{\bar{a}\}\cup \{\bar{a}'\}$ be the $\nu$-tree obtained from $T$ by rotating $\bar{a}$.

\begin{figure}[h]
  \centering
\begin{tabular}{cc}
    \begin{tikzpicture}[scale=1.8]

    \draw (1,-1) node [scale=0.6, circle, draw,fill=black,anchor=center]{};
    \draw (1,-2) node [scale=0.6, circle, draw,fill=black,anchor=center]{};
    \draw (2,-2) node [scale=0.6, circle, draw,fill=black,anchor=center]{};

\draw[black, thick] (0.4,-2) -- (0.8,-2) arc (-90:0:0.2) -- (1,-1.2) arc (180:90:0.2) -- (2.4,-1);
\draw[black, thick] (0.4,-1) -- (0.8,-1) arc (-90:0:0.2) -- (1,-0.6);
\draw[black, thick]  (1,-2.6)--(1,-2.2) arc (180:90:0.2) -- (1.8,-2) arc (-90:0:0.2) -- (2,-0.6);
\draw[black, thick]  (2,-2.6)--(2,-2.2) arc (180:90:0.2) -- (2.4,-2) ;

    \node[black, scale=1] at (0.2,-2) {$i_2$};
    \node[black, scale=1] at (0.2,-1) {$i_1$};
    \node[black, scale=1] at (1,-2.8) {$i_3$};
    \node[black, scale=1] at (2,-2.8) {$i_4$};
    \node[black, scale=1] at (0.5,-2.5) {$T$};
    \node[black, scale=1] at (1.3,-0.8) {$\notin A$};
    \node[black, scale=1] at (1.2,-1.8) {$\bar{a}$};
    \node[black, scale=1] at (2.3,-1.8) {$\notin A$};
\end{tikzpicture}

&

    \begin{tikzpicture}[scale=1.8]
    \draw (1,-1) node [scale=0.6, circle, draw,fill=black,anchor=center]{};
    \draw (2,-1) node [scale=0.6, circle, draw,fill=black,anchor=center]{};
    \draw (2,-2) node [scale=0.6, circle, draw,fill=black,anchor=center]{};

\draw[black, thick] (0.4,-1) -- (0.8,-1) arc (-90:0:0.2) -- (1,-0.6);
\draw[black, thick]  (2,-2.6)--(2,-2.2) arc (180:90:0.2) -- (2.4,-2) ;
\draw[black, thick]  (1,-2.6)--(1,-1.2) arc (180:90:0.2) -- (1.8,-1) arc (-90:0:0.2) -- (2,-0.6);
\draw[black, thick] (0.4,-2) -- (1.8,-2) arc (-90:0:0.2) --(2,-1.2)  arc (180:90:0.2)--(2.4,-1);

    \node[black, scale=1] at (0.2,-2) {$i_2$};
    \node[black, scale=1] at (0.2,-1) {$i_1$};
    \node[black, scale=1] at (1,-2.8) {$i_3$};
    \node[black, scale=1] at (2,-2.8) {$i_4$};
    \node[black, scale=1] at (0.5,-2.5) {$T'$};
    \node[black, scale=1] at (1.3,-0.8) {$\notin A$};
    \node[black, scale=1] at (2.2,-0.8) {$\bar{a}'$};
    \node[black, scale=1] at (2.3,-1.8) {$\notin A$};
\end{tikzpicture}
\end{tabular}
\caption{Structure of the $\nu$-trees $T$ and $T'$.}
\label{figproof}
\end{figure}
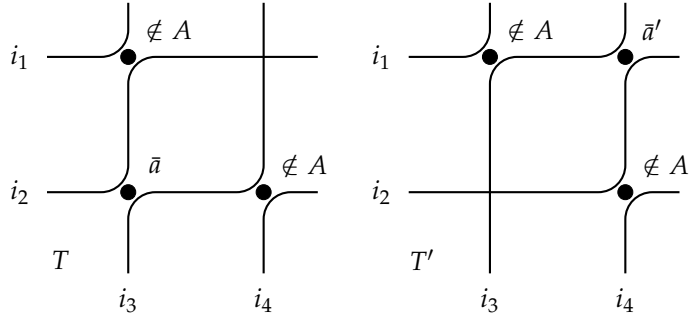

Let $i_1,i_2,i_3,i_4$ be the pipes passing through the nodes of $T$ involved in the rotation of~$\bar{a}\in T$, as illustrated in Figure \ref{figproof}. Let $\bar{A}=A\setminus \{\bar{a}\}$, then $\bar{A} \subseteq T$ is a subset of ascents of $T$ with $|\bar{A}|=k-1$ and $\bar{A}\subseteq T'$ is also a subset of ascents of $T'$. By induction hypothesis, the inequalities and equalities defining $C(T_{\bar{A}})$ and $C(T'_{\bar{A}})$ have solutions. Let 
\begin{align*}
p=(p_1,...,p_\ell) \in C(T_{\bar{A}}), \\
p'=(p'_1,...,p'_{\ell'}) \in C(T'_{\bar{A}}).
\end{align*}
Note that all defining inequalities and equalities defining $C(T_{\bar{A}})$ and $C(T'_{\bar{A}})$ coincide, except for three:
\begin{align*}
x_{i_1}>x_{i_2}>x_{i_3}>x_{i_4} \text{ in } C(T_{\bar{A}}), \\
x_{i_1}>x_{i_3}>x_{i_2}>x_{i_4} \text{ in } C(T'_{\bar{A}}).
\end{align*}
On the other hand, the defining inequalities and equalities for $C(T_{A})$ are all other defining inequalities for $C(T_{\bar{A}})$ and $C(T'_{\bar{A}})$ together with
\begin{align*}
x_{i_1}>x_{i_2}=x_{i_3}>x_{i_4} \text{ in } C(T_{\bar{A}}).
\end{align*}
Let $\bar{D}$ be defined by the other inequalities and equalities together with
\begin{align*}
x_{i_1}>x_{i_2}\text{, }x_{i_3}>x_{i_4}.
\end{align*}
Note that we do not put any condition between $x_{i_2}$ and $x_{i_3}$ in $\bar{D}$, while
\begin{align*}
x_{i_2}>x_{i_3} \text{ in } C(T_{\bar{A}}), \\
x_{i_3}>x_{i_2} \text{ in } C(T'_{\bar{A}}), \\
x_{i_2}=x_{i_3} \text{ in } C(T_{A}).
\end{align*}
We want to show that $ C(T_{A})\neq \emptyset$. For this, note that $C(T_{\bar{A}}) \subseteq \bar{D}$ and $C(T'_{\bar{A}}) \subseteq \bar{D}$. Therefore 
\begin{align*}
q(t)=(1-t)p+tp' \in \bar{D} \text{ for all } t \in [0,1].
\end{align*}
Since
\begin{align*}
q_{i_2}(0)=p_{i_2}>p_{i_3}=q_{i_3}(0) \text{ and } q_{i_2}(1)=p'_{i_2}<p'_{i_3}=q_{i_3}(1) .
\end{align*}
Then, there must be some $r\in [0,1]$ such that
\begin{align*}
q_{i_2}(r)=q_{i_3}(r).
\end{align*}
Since all other defining inequalities of $C(T_{A})$ are satisfied for $q(r)$, then
\begin{align*}
q(r)\in C(T_{A}).
\end{align*}
Thus, $C(T_{A})$ has a solution as wanted.
\end{proof}

\begin{remark}
Note that Lemma \ref{exlf} is not necessarily true if \( A \subseteq T \) is not a subset of ascents, as demonstrated in Example~\ref{exgbigger}.
\end{remark}

\begin{example}\label{exgbigger}
Let us continue Example \ref{example3d} and consider the marked $\nu$-trees $T_{M_2}$, $T_{M_3}$, and $T'_{M_4}$ as illustrated in Figure \ref{choice} and let $I_2=T \setminus M_2$, $I_3=T \setminus M_3$, $I_4=T' \setminus M_4$.

For $T_{M_2}$, we obtain the conditions $x_1 > x_2 = x_3 = x_4 > x_5 > x_6$, which has a solution, and the modified brick polyhedron $\mathcal{B}^{I_2}(Q_\nu, w_\nu)$ is a face.
For $T_{M_3}$, we obtain the conditions $x_1 > x_2 > x_3 > x_4 = x_5 > x_6$ with $x_3 = x_5$, which has no solution, and the modified brick polyhedron $\mathcal{B}^{I_3}(Q_\nu, w_\nu)$ is not a face.
For $T'_{M_4}$, we obtain the conditions $x_1 > x_3 = x_4 = x_5$ and $x_3 > x_2 > x_5 > x_6$, which has no solution, and the modified brick polyhedron $\mathcal{B}^{I_4}(Q_\nu, w_\nu)$ is not a face.

\end{example}

\begin{figure}[h!]
\centering
\includegraphics[width=\textwidth]{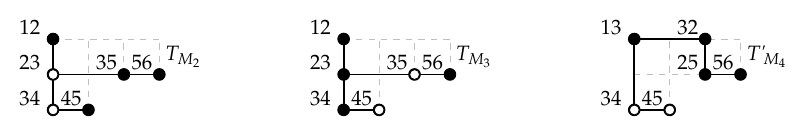}
\caption{Marked $\nu$-trees for $\nu=ENEEN$.}
\label{choice}
\end{figure}

\begin{proposition}\label{isaface}
Let $I=T \setminus A$, where $T$ is a $\nu$-tree and $A\subseteq T$ a subset of ascents. Then the modified brick polyhedron~$\mathcal{B}^I(Q_\nu, w_\nu)$ is a face of the $\nu$-brick polyhedron $\mathcal{B}(Q_\nu, w_\nu)$.
\end{proposition}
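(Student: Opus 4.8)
The plan is to combine the general characterization of faces from \Cref{faceprop} with the existence of the linear functional guaranteed by \Cref{exlf}. By \Cref{faceprop}\eqref{p1}, knowing that a subset arises as $J\setminus J^F$ for some face $F$ is exactly what certifies that a modified brick polyhedron is a genuine face; so the strategy is to exhibit a supporting linear functional whose minimizing (or maximizing) face is precisely $\mathcal{B}^I(Q_\nu,w_\nu)$, and then verify via the earlier machinery that this face equals the modified brick polyhedron.

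Concretely, I would first invoke \Cref{exlf} to obtain a linear functional $f$ with $f(\beta_a)=0$ for all $a\in A$ and $f(\beta_t)>0$ for all $t\in T\setminus A$, where $\beta_t=r(T,t)$. The idea is that $f$ (or rather the functional realized by a vector $\eta_F$ pairing with these roots in this way) should define the face $F$ of $\mathcal{B}(Q_\nu,w_\nu)$ that we claim equals $\mathcal{B}^I(Q_\nu,w_\nu)$. The next step is to identify $V_F$: since $f$ vanishes exactly on $\{\beta_a : a\in A\}$ among the roots in $R(T)$, the relevant span should be spanned by these roots, giving $\{t\in T : r(T,t)\in V_F\}=A$, hence $J^F=A$ and $I=T\setminus J^F=T\setminus A$, matching the definition. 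One must check that $b(T)\in F$, i.e.\ that $b(T)$ minimizes the functional; this follows because the local cone at $b(T)$ is $\operatorname{cone}R(T)$ and $f$ is nonnegative on all of $R(T)$ while $T$ is a facet containing $I$.

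Having set this up, I would apply \Cref{faceprop}\eqref{p1} directly: once $F$ is the face cut out by the supporting functional and $J=T$ is a facet with $b(T)\in F$, the proposition yields $F=\mathcal{B}^{J\setminus J^F}(Q_\nu,w_\nu)=\mathcal{B}^I(Q_\nu,w_\nu)$, which is exactly the statement that $\mathcal{B}^I(Q_\nu,w_\nu)$ is a face. The key technical point to nail down is translating the linear functional $f$ on root space (coming from \Cref{exlf}) into an honest supporting hyperplane of the brick polyhedron and verifying that its minimal face is spanned precisely by $A$, so that $V_F=\operatorname{span}\{\beta_a : a\in A\}$ and no extraneous roots become orthogonal to $\eta_F$.

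The main obstacle I anticipate is this identification of $V_F$ with $\operatorname{span}\{\beta_a:a\in A\}$ and the corresponding verification that $J^F$ is exactly $A$ rather than something larger. The inequality direction ($f>0$ off $A$) guarantees that roots outside $A$ do not lie in $V_F$, while the equality direction places $\{\beta_a\}$ inside the functional's kernel; but one must ensure the span of $F$ does not accidentally pick up additional roots that are linear combinations of the $\beta_a$ yet correspond to other tree nodes. The acyclicity of $C(T)$ from \Cref{rema} (equivalently, linear independence / genericity of the root configuration of a $\nu$-tree) is what rules this out, ensuring the functional $f$ isolates exactly the marked ascents. Once this is secured, \Cref{faceprop} does the rest mechanically.
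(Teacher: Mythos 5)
Your proposal is correct and follows essentially the same route as the paper: take the functional $f$ from \Cref{exlf}, let $F$ be the face of $\mathcal{B}(Q_\nu,w_\nu)$ minimizing $f$, use the local cone property to see $b(T)\in F$ and that $V_F$ is spanned by $\{r(T,a)\mid a\in A\}$ (the inequalities $f(\beta_t)>0$ for $t\notin A$ excluding extraneous roots, exactly as you note), and then conclude via \Cref{faceprop} that $F=\mathcal{B}^I(Q_\nu,w_\nu)$. The paper phrases the last step through \Cref{faceprop}\eqref{p2} plus a local-cone comparison rather than citing part \eqref{p1} outright, but this is the same argument.
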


\begin{proof}
Take $f$ as in Lemma \ref{exlf} and let $F$ be the face of the brick polyhedron~$\mathcal{B}(Q_\nu, w_\nu)$ minimizing~$f$. Since for every $\nu$-tree $\tilde{T}$ we have
\begin{align*}
b(\tilde{T})-b(T)=\sum_{t\in T}c_t r(T,t) \text{ for some } c_t\geq0.
\end{align*}
Then $f(b(\tilde{T})) \geq  f(b(T))$, with equality satisfied when $b(\tilde{T})-b(T) \in V_F$, the vector space spanned by $\{r(T,a)\mid a \in A\}$. In particular $b(T)\in F$ and by Lemma~\ref{faceprop}~\eqref{p2}
\begin{align*}
b(\tilde{T}) \in F \Longleftrightarrow T\setminus A=I \subseteq \tilde{T}.
\end{align*}

Now $F=\mathcal{B}(Q_\nu, w_\nu)\cap \widetilde{V_F}$, and the local cone at point $b(\tilde{T})$ inside $F$ is the intersection of the local cone of $b(\tilde{T})$ in $\mathcal{B}(Q_\nu, w_\nu)$ with $\widetilde{V_F}$. This is equal to the local cone of $b(\tilde{T})$ in $\mathcal{B}^I(Q_\nu, w_\nu)$. Therefore, $F=\mathcal{B}^I(Q_\nu, w_\nu)$.
\end{proof}

\subsection{Bounded faces of $\nu$-brick polyhedra}
The goal of this section is to show that the faces in Proposition \ref{isaface} are exactly the bounded faces of the $\nu$-brick polyhedron (\Cref{thm_main_bounded_faces}).

\subsubsection{The spherical and root independent property}

We denote by $Q_{\nu,I}$ the word obtained from $Q_\nu$ by deleting the letters with positions in~$I$, and consider the corresponding subword complex $\mathcal{SC}(Q_{\nu,I},w_\nu)$. The purpose of this section is to show that $\mathcal{SC}(Q_{\nu,I},w_\nu)$ is spherical and root independent when~$I=T\setminus A$ for a $\nu$-tree~$T$ and~$A\subseteq T$ a subset of ascents. As a consequence, $\mathcal{SC}(Q_{\nu,I},w_\nu)$ is realized as the polar of the brick polytope $\mathcal{B}(Q_{\nu,I},w_\nu)$, which is combinatorially isomorphic to $\mathcal{B}^I(Q_{\nu},w_\nu)$. Before doing this, we need some preliminaries.

\begin{theorem}[\cite{bpofssc}] \label{pilaudstump}
If $\mathcal{SC}(Q,w_\circ)$ is root independent, then it is realized by the polar of the brick polytope $\mathcal{B}(Q, w_\circ)$.
\end{theorem}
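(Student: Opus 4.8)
The plan is to show directly that the map $I \mapsto \mathcal{B}^I(Q,w_\circ)$ is an anti-isomorphism from the face poset of $\mathcal{SC}(Q,w_\circ)$ onto the face poset of the brick polytope $\mathcal{B}(Q,w_\circ)$, which is exactly the assertion that $\mathcal{SC}(Q,w_\circ)$ is the boundary complex of the polar dual $\mathcal{B}(Q,w_\circ)^{\triangle}$. First I would record a reduction to the bounded case: for $w_\circ$ the longest element, a non-empty $\mathcal{SC}(Q,w_\circ)$ forces $\operatorname{Dem}(Q)=w_\circ$ (since $\operatorname{Dem}(Q)\leq_B w_\circ$ always), so the defining set of the Bruhat cone $\mathcal{C}^+(w_\circ,\operatorname{Dem}(Q))$ is empty and the cone reduces to $\{0\}$. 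Hence $\mathcal{B}(Q,w_\circ)$ is a genuine polytope and each $\mathcal{B}^I(Q,w_\circ)$ is bounded.

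The heart of the argument is that \emph{every} $\mathcal{B}^I(Q,w_\circ)$ is actually a face, and this is where root independence does all the work. Fixing a facet $J\supseteq I$, the root configuration $R(J)=\{r(J,j)\mid j\in J\}$ is linearly independent, so one can freely prescribe a linear functional $f$ with $f(r(J,j))=0$ for $j\in J\setminus I$ and $f(r(J,j))>0$ for $j\in I$. This is precisely the input that \Cref{exlf} had to construct by hand in the non-spherical $\nu$-setting, where root independence fails; under the present hypothesis it is immediate. With such an $f$ the proof of \Cref{isaface} then applies verbatim: the local cone property writes $b(\tilde J)-b(J)=\sum_{j\in J}c_j\,r(J,j)$ with all $c_j\geq 0$, so $f$ attains its minimum over $\mathcal{B}(Q,w_\circ)$ exactly at the brick vectors $b(\tilde J)$ with $b(\tilde J)-b(J)\in V_F=\operatorname{span}\{r(J,j)\mid j\in J\setminus I\}$, and by \Cref{faceprop} this minimizing face equals $\mathcal{B}^I(Q,w_\circ)$.

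Together with \Cref{facesall}, which gives the reverse inclusion that every face of $\mathcal{B}(Q,w_\circ)$ is some $\mathcal{B}^I$, this shows $I\mapsto\mathcal{B}^I$ surjects onto the faces of the polytope. For injectivity I would use that, under root independence, distinct facets have distinct brick vectors (a nonzero displacement $\sum c_t\,r(J,t)$ cannot vanish when the $r(J,t)$ are independent), so by \Cref{faceprop}~\eqref{p2} the face $\mathcal{B}^I$ records precisely the set $\{J\text{ facet}\mid I\subseteq J\}$; and in a simplicial \emph{sphere} a face is determined by the set of facets containing it — if $I\subsetneq I'$ shared this set, every facet through $I$ would contain a fixed vertex of $I'\setminus I$, making $\operatorname{lk}(I)$ a cone, which no sphere is. Inclusion-reversal is automatic, since enlarging the face shrinks the set of facets above it. Thus $I\mapsto\mathcal{B}^I$ is an anti-isomorphism of face posets; since $\mathcal{SC}(Q,w_\circ)$ is simplicial, $\mathcal{B}(Q,w_\circ)$ is forced to be simple and $\mathcal{SC}(Q,w_\circ)=\partial\big(\mathcal{B}(Q,w_\circ)^{\triangle}\big)$, as required.

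The main obstacle is the claim that every $\mathcal{B}^I$ is a face: \Cref{facesall} alone only yields the easy half, and one genuinely needs to produce a supporting functional for each face and feed it through the flip-connectivity and local-cone machinery behind \Cref{isaface}. Root independence is exactly the hypothesis that makes this uniform, as it upgrades each local cone $\operatorname{cone} R(J)$ to a full-dimensional simplicial cone; equivalently, it is what guarantees the brick vectors are distinct simple vertices and that their normal cones assemble into the complete simplicial fan dual to the sphere. The secondary subtlety is the sphere-specific injectivity step, which is the only place the hypothesis $w=w_\circ$ (rather than a general $w$, where $\mathcal{SC}(Q,w)$ may be a ball and $\mathcal{B}(Q,w)$ unbounded) is truly used.
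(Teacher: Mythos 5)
The paper does not prove this statement: Theorem~\ref{pilaudstump} is imported as a black box from \cite{bpofssc}, where it is established by a direct analysis of the normal fan of the brick polytope. Your argument is therefore necessarily a different route, and it is a coherent one: you re-derive the Pilaud--Stump realization from the face-structure machinery of \Cref{sec_faces_brick_polyhedra}. This is not circular, since \Cref{minimalelement}, \Cref{ifflemma}, \Cref{faceprop} and \Cref{facesall} rest only on the local cone property and flip-connectivity from \cite{bpolyhedra} and never invoke Theorem~\ref{pilaudstump}, which enters the paper's logic only later through \Cref{cors}. Your main steps check out: for $w=w_\circ$ no $\beta$ satisfies $w_\circ\prec_B s_\beta w_\circ$, so the Bruhat cone is $\{0\}$ and $\mathcal{B}(Q,w_\circ)$ is a polytope; root independence makes the supporting functional that \Cref{exlf} must build by hand in the $\nu$-setting immediate, so the argument of \Cref{isaface} shows every $\mathcal{B}^I(Q,w_\circ)$ is a face; \Cref{facesall} gives surjectivity; and since $\operatorname{Dem}(Q)=w_\circ$ forces $\mathcal{SC}(Q,w_\circ)$ and all its links to be spheres, your link-is-not-a-cone argument gives injectivity. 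Two points to tighten. First, a bijective order-reversing map of finite posets need not be an anti-isomorphism, so you must also show that $\mathcal{B}^{I'}(Q,w_\circ)\subseteq\mathcal{B}^{I}(Q,w_\circ)$ forces $I\subseteq I'$; this follows from the same cone argument, since $\{J: I'\subseteq J\}\subseteq\{J: I\subseteq J\}$ together with some $v\in I\setminus I'$ would make the link of $I'$ a cone over $v$. Second, the machinery you quote is developed in this paper only in type $A$, whereas the cited theorem holds for all finite types; the lemmas you use are in fact type-independent, but that should be said explicitly. What your route buys is a uniform derivation of the spherical, root-independent case from the general polyhedral picture; what it costs is reliance on \cite{bpolyhedra}, which historically postdates and partly builds on \cite{bpofssc}.
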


\begin{lemma}[\cite{doolittle}, \cite{ceballos_and_labbe}]\label{fact}
 Every spherical subword complex $\mathcal{SC}(Q,w)$ is isomorphic to a (spherical) subword complex of the form $\mathcal{SC}(\Tilde{Q},w_\circ)$.
\end{lemma}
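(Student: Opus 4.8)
The plan is to embed $\mathcal{SC}(Q,w)$ into a larger subword complex by appending to $Q$ a reduced word that raises the Demazure product to $w_\circ$ while never being partially used. I would first recall from \cite{knutson_miller} that $\mathcal{SC}(Q,w)$ is spherical exactly when $w=\operatorname{Dem}(Q)$; this equality is the only role the sphericity hypothesis plays, so I would record it at the outset.

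Next I would fix a reduced word $R=(r_1,\dots,r_\ell)$ for $w^{-1}w_\circ$, so that $\ell=\ell(w^{-1}w_\circ)=\ell(w_\circ)-\ell(w)$, and set $\widetilde{Q}:=(q_1,\dots,q_m,r_1,\dots,r_\ell)$. Since $w\cdot(w^{-1}w_\circ)=w_\circ$ is a length-additive factorization, concatenating any reduced word for $w$ with $R$ is a reduced word for $w_\circ$; exhibiting one such subword of $\widetilde{Q}$ (using that the spherical complex is nonempty) shows $\operatorname{Dem}(\widetilde{Q})=w_\circ$, so $\mathcal{SC}(\widetilde{Q},w_\circ)$ is itself spherical. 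The candidate isomorphism is then the map induced by the inclusion $[m]\hookrightarrow[m+\ell]$, sending a facet $I$ of $\mathcal{SC}(Q,w)$ to the same set $I$ regarded as positions of $\widetilde{Q}$.

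The forward direction is routine: if $Q_{[m]\setminus I}$ is reduced for $w$, then $\widetilde{Q}_{[m+\ell]\setminus I}=Q_{[m]\setminus I}\cdot R$ has product $w_\circ$ and length $\ell(w)+\ell=\ell(w_\circ)$, hence is reduced, so $I$ is a facet of $\mathcal{SC}(\widetilde{Q},w_\circ)$. The heart of the argument, and the step I expect to be the main obstacle, is the converse: I must show that no facet $\widetilde{I}$ of $\mathcal{SC}(\widetilde{Q},w_\circ)$ deletes any appended position. Writing $\widetilde{I}=\widetilde{I}_Q\sqcup\widetilde{I}_R$ according to the two blocks, the reduced word $\widetilde{Q}_{[m+\ell]\setminus\widetilde{I}}$ factors as a prefix coming from $Q$ followed by a suffix coming from $R$; set $u:=\prod Q_{[m]\setminus\widetilde{I}_Q}$ and $v:=\prod R_{[\ell]\setminus\widetilde{I}_R}$. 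As a prefix and a suffix of a reduced word, $u$ and $v$ are reduced, with $uv=w_\circ$ length-additively. Now $u$ is a subword product of $Q$, so by the definition of the Demazure product $u\leq_B\operatorname{Dem}(Q)=w$ (this is precisely where sphericity is used), giving $\ell(u)\leq\ell(w)$; and $v$ is a subword of $R$, so $\ell(v)\leq\ell$. Combining, $\ell(w_\circ)=\ell(u)+\ell(v)\leq\ell(w)+\ell=\ell(w_\circ)$, which forces equality throughout: thus $\ell(v)=\ell$, i.e. $\widetilde{I}_R=\emptyset$ and $\widetilde{I}\subseteq[m]$, and $u=w$, i.e. $Q_{[m]\setminus\widetilde{I}}$ is reduced for $w$. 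Hence every facet of $\mathcal{SC}(\widetilde{Q},w_\circ)$ is a facet of $\mathcal{SC}(Q,w)$ and conversely; since the appended positions lie in no facet they are not vertices, so the two complexes carry the same facet set on the common ground set $[m]$, and the inclusion-induced map is the desired isomorphism.
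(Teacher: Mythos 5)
Your proof is correct and follows essentially the same route as the paper, which states the lemma by citation and sketches exactly this ``completing'' construction (append a reduced word for $w^{-1}w_\circ$ to $Q$) in Remark~\ref{completing}. Your length-count argument showing that no facet of $\mathcal{SC}(\widetilde{Q},w_\circ)$ uses an appended letter correctly supplies the verification the paper defers to \cite{doolittle} and \cite{ceballos_and_labbe}.
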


\begin{remark}[Completing]\label{completing}
For any spherical subword complex $\mathcal{SC}(Q,w)$, there exists a word ${w'}$, such that $w{w'} = w_\circ$ with $\ell(w)+\ell(w')=\ell(w_\circ)$, by completing~$w$ to~$w_\circ$. Defining~${Q'}=Q {w'}$, the subword complexes $\mathcal{SC}({Q},w)$ and $\mathcal{SC}({Q'},w_\circ)$ are isomorphic and the brick polytope~$\mathcal{B}(Q,w)$ is just a translation of~$\mathcal{B}(Q',w_\circ)$.
\end{remark}

The following result is assumed/mentioned in \cite{bpofssc} but not explicitly written down. We include it here with proof for completeness.
\begin{corollary}[{\cite{bpofssc}}]\label{cors}
Every spherical, root independent subword complex $\mathcal{SC}(Q,w)$, where $w$ is not necessary equal to $w_\circ$, is realized by the polar of the brick polytope $\mathcal{B}(Q,w)$.
\end{corollary}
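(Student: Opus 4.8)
The plan is to reduce to the case $w = w_\circ$, which is already handled by Theorem \ref{pilaudstump}, using the completing operation of Remark \ref{completing}. Given a spherical, root independent subword complex $\mathcal{SC}(Q,w)$ with $Q$ of length $m$, I would first complete $w$ to $w_\circ$: by Remark \ref{completing} there is a word $w'$ with $ww' = w_\circ$ and $\ell(w) + \ell(w') = \ell(w_\circ)$, and setting $Q' = Q w'$ yields an isomorphism $\mathcal{SC}(Q,w) \cong \mathcal{SC}(Q', w_\circ)$ under which $\mathcal{B}(Q,w)$ is merely a translate of $\mathcal{B}(Q', w_\circ)$. Under this isomorphism a facet $I$ of $\mathcal{SC}(Q,w)$ (a subset of $[m]$) is sent to the same index set $I$, now a facet of $\mathcal{SC}(Q', w_\circ)$, while the positions of the appended letters of $w'$ lie in $\{m+1,\dots,m+\ell(w')\}$ and belong to the complement of every facet.

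The key step is to verify that $\mathcal{SC}(Q', w_\circ)$ is again root independent, so that Theorem \ref{pilaudstump} applies. For this I would observe that the root configurations of corresponding facets coincide: for $i \in I \subseteq [m]$, the product $\prod Q'_{\{1,\dots,i-1\}\setminus I}$ defining $r(I,i)$ in $\mathcal{SC}(Q', w_\circ)$ involves only letters in the first $m$ positions, which are exactly the letters of $Q$. Hence $r(I,i)$ computed in $\mathcal{SC}(Q', w_\circ)$ equals $r(I,i)$ computed in $\mathcal{SC}(Q,w)$, so the two root configurations $R(I)$ are literally equal. In particular, linear independence of $R(I)$ transfers from $\mathcal{SC}(Q,w)$ to $\mathcal{SC}(Q', w_\circ)$, establishing root independence of the latter.

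With root independence of $\mathcal{SC}(Q', w_\circ)$ in hand, Theorem \ref{pilaudstump} shows it is realized by the polar of the brick polytope $\mathcal{B}(Q', w_\circ)$. The final step is to transport this realization back along the isomorphism. Since $\mathcal{B}(Q,w)$ is a translate of $\mathcal{B}(Q', w_\circ)$, it has the same normal fan, and the polar-dual realization of a subword complex depends only on this normal fan and not on the position of the polytope; thus $\mathcal{SC}(Q,w) \cong \mathcal{SC}(Q', w_\circ)$ is equally realized by the polar of $\mathcal{B}(Q,w)$, which is the claim. (Sphericity moreover forces $w = \operatorname{Dem}(Q)$, so the Bruhat cone $\mathcal{C}^+(w,\operatorname{Dem}(Q))$ is trivial and $\mathcal{B}(Q,w)$ is genuinely a bounded polytope, consistent with the statement.)

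The step I expect to require the most care is the transfer of root independence: making precise that the completing letters lie beyond all facet positions and therefore leave every root configuration unchanged. Once this observation is isolated the reduction to Theorem \ref{pilaudstump} is routine. A secondary point worth stating carefully is the translation-invariance of the polar realization, which is exactly what legitimizes moving the conclusion from $\mathcal{B}(Q', w_\circ)$ back to the translate $\mathcal{B}(Q,w)$.
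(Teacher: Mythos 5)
Your proposal is correct and follows essentially the same route as the paper: complete $w$ to $w_\circ$ via Remark~\ref{completing}, observe that facets and root configurations are unchanged so that root independence transfers and Theorem~\ref{pilaudstump} applies to $\mathcal{SC}(Q',w_\circ)$, and then carry the realization back since $\mathcal{B}(Q,w)$ is a translate of $\mathcal{B}(Q',w_\circ)$. Your added justifications (that the appended letters lie beyond all facet positions, and that translation preserves the normal fan) are exactly the details the paper leaves implicit.
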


\begin{proof}
By Lemma \ref{fact}, $\mathcal{SC}(Q,w) \cong \mathcal{SC}(Q',w_\circ)$ by completing, as in Remark \ref{completing} the facets and root configurations do not change, so $\mathcal{SC}({Q'},w_\circ)$ is root independent and we can apply Theorem~\ref{pilaudstump}. Moreover the brick polytope of $\mathcal{SC}(Q',w_\circ)$ is a translation of the brick polytope of $\mathcal{SC}(Q,w)$, so the Corollary holds.
\end{proof}

Our next goal is to prove the following proposition. 
\begin{proposition}\label{sphericalandrootindependent}
    Let $I=T \setminus A$, where $T$ is a $\nu$-tree and $A\subseteq T$ a subset of ascents, then~$\mathcal{SC}(Q_{\nu,I},w_\nu)$ is a spherical and root independent subword complex.
\end{proposition}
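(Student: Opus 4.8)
The plan is to establish the two properties separately, relying on the combinatorial dictionary between $\nu$-trees, pipe dreams, and facets of $\mathcal{SC}(Q_{\nu,I},w_\nu)$. First I would clarify what the facets of $\mathcal{SC}(Q_{\nu,I},w_\nu)$ are: since $I=T\setminus A$ is an interior face of the $\nu$-Tamari complex (by the Lemma characterizing interior faces via pairs $(T,A)$), deleting the positions in $I$ from $Q_\nu$ leaves a word whose facets correspond exactly to the $\nu$-trees $\tilde T$ containing $I$, i.e. to the $\nu$-trees reachable from $T$ by rotating only at marked ascents. The key structural input is Lemma \ref{exlf}, which produces a linear functional $f$ with $f(\beta_a)=0$ for $a\in A$ and $f(\beta_t)>0$ for $t\in T\setminus A$; equivalently the cone $C(T_A)$ is nonempty.

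For \emph{root independence}, I would show that for every facet $J$ of $\mathcal{SC}(Q_{\nu,I},w_\nu)$ the modified root configuration $R^I(J)=\{r(J,j)\mid j\in J\setminus I\}$ is linearly independent. The natural route is to argue that these roots, being the roots $\beta_a$ attached to the marked ascents of the corresponding $\nu$-tree, all lie on the boundary of (or span the lineality-free directions of) the cone whose interior is nonempty by Lemma \ref{exlf}. Concretely, the existence of the functional $f$ vanishing precisely on $\operatorname{span}\{\beta_a : a\in A\}$ and strictly positive on the remaining roots forces the configuration to have full rank equal to $|A|$ at every facet; root independence then follows since $|J\setminus I|$ equals $|A|$ uniformly across facets. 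I would verify that the count $|J\setminus I|=|A|$ is independent of $J$, which holds because all facets of a subword complex have the same cardinality.

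For the \emph{spherical} property, I would invoke the characterization that a subword complex $\mathcal{SC}(Q_{\nu,I},w_\nu)$ is spherical if and only if its Bruhat cone (equivalently $C^+(w_\nu,\operatorname{Dem})$) is pointed, or equivalently that the intersection $\bigcap_J \operatorname{cone} R(J)$ over its facets reduces to a single point $\{0\}$ after the deletion — i.e. root independence together with the nonempty interior of $C(T_A)$ witnesses that the brick polyhedron $\mathcal{B}(Q_{\nu,I},w_\nu)$ is in fact a polytope. The functional $f$ from Lemma \ref{exlf} serves as the strictly positive linear functional certifying boundedness, hence sphericity. I would connect sphericity to the boundedness of $\mathcal{B}^I(Q_\nu,w_\nu)$ established implicitly in Proposition \ref{isaface}, where the same $f$ cuts out a bounded face.

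The main obstacle I anticipate is bookkeeping the passage between the two root systems: the roots $r(J,j)$ computed in the deleted word $Q_{\nu,I}$ must be matched with the roots $\beta_t=r(T,t)$ in the full word $Q_\nu$, since Lemma \ref{exlf} and Definition \ref{conee} are phrased in terms of the $\nu$-tree $T$ and its nodes. Verifying that deletion of the positions $I$ does not alter the relevant root function values (so that the linear-algebraic independence proved via $f$ transfers verbatim to $\mathcal{SC}(Q_{\nu,I},w_\nu)$) is the delicate point, and I would handle it by tracking the root function along the rotation sequences that never flip an element of $I$, exactly as in the proof of Lemma \ref{ifflemma}.
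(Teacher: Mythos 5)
Your reduction to the two separate properties and your identification of the facets of $\mathcal{SC}(Q_{\nu,I},w_\nu)$ are correct, but both of your core inferences from Lemma~\ref{exlf} contain genuine gaps. For root independence, you claim that the existence of the functional $f$ with $f(\beta_a)=0$ for $a\in A$ and $f(\beta_t)>0$ otherwise ``forces the configuration to have full rank $|A|$''. It does not: a linear functional vanishing on a set of vectors only places them in a hyperplane and says nothing about their linear independence (for instance, $f$ can vanish simultaneously on $e_1-e_2$, $e_2-e_3$ and their sum $e_1-e_3$, which are dependent; equivalently, nonemptiness of $C(T_A)$ is compatible with dependent $\beta_a$'s). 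The paper proves linear independence of $\{r(T,a):a\in A\}$ by a separate induction on $|A|$ (Lemma~\ref{proofA1}) whose engine is purely combinatorial: using the fact that a pipe has at most two turns inside $F_\nu$ (Remark~\ref{2kisses}), one picks the ascent with $r(T,a)=e_i-e_j$ and $i$ minimal, shows that no other root in the configuration can contribute a term $e_k-e_i$, concludes that its coefficient in any vanishing linear combination is zero, and recurses. This argument is independent of Lemma~\ref{exlf} and is the missing idea in your proposal; note also that root independence only requires exhibiting one facet with linearly independent root configuration, so it suffices to treat the facet corresponding to $(T,A)$ itself rather than all facets $J$.

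For sphericity, your proposed equivalence (Bruhat cone trivial $\Leftrightarrow$ bounded brick polyhedron $\Leftrightarrow$ spherical) is legitimate in principle, but the functional $f$ cannot certify it: $f$ vanishes on all of $\operatorname{cone}\{\beta_a : a\in A\}$, which contains the Bruhat cone of $\mathcal{SC}(Q_{\nu,I},w_\nu)$ but is certainly not $\{0\}$, so $f$ is not a ``strictly positive functional certifying boundedness''. Moreover, the paper's logical order is the reverse of yours: sphericity and root independence are established first and only then used (via Corollary~\ref{cors} and Corollary~\ref{star1}) to conclude that $\mathcal{B}(Q_{\nu,I},w_\nu)$ is a bounded polytope, so appealing to boundedness of the brick polyhedron at this stage would be circular. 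The paper's actual argument for sphericity (Lemma~\ref{lemmaspherical}) is entirely different: the reverse-containment poset of faces containing $I$ is the face poset of a cell of the $\nu$-associahedron, which by \cite[Proposition~5.16]{CPS19} is a product of classical associahedra and hence a polytope, so $\mathcal{SC}(Q_{\nu,I},w_\nu)$ is the boundary complex of the polar of that polytope and in particular a sphere. You would need either this external input or an explicit verification that $\operatorname{Dem}(Q_{\nu,I})=w_\nu$ to close this gap.
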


The proof follows in two steps. \Cref{lemmaspherical} shows the spherical property and~\Cref{proofA1} shows root independence.

\begin{lemma}[Spherical]\label{lemmaspherical}
For an interior face $I\in \mathcal{SC}(Q_\nu,w_\nu)$, the subword complex~$\mathcal{SC}(Q_{\nu, I},w_\nu)$ is realizable as the boundary of a polytope, hence it is spherical.
\end{lemma}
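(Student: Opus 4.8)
The plan is to show that $\mathcal{SC}(Q_{\nu,I}, w_\nu)$ is spherical by exhibiting it explicitly as the boundary complex of a polytope, using the interpretation of $I = T\setminus A$ in terms of $\nu$-trees and pipe dreams. Recall from \Cref{inner} that interior faces of the $\nu$-Tamari complex correspond to pairs $(T,A)$ where $A$ is a subset of ascents of $T$; the facets of $\mathcal{SC}(Q_{\nu,I},w_\nu)$ are exactly the $\nu$-trees $\tilde T$ containing $I$, i.e. those obtained from $T$ by performing a subset of right rotations at the marked ascents $A$. The key geometric input is \Cref{exlf}: the cone $C(T_A)$ is nonempty, equivalently there is a linear functional $f$ with $f(\beta_a)=0$ for $a\in A$ and $f(\beta_t)>0$ for $t\in T\setminus A$. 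This functional singles out the face $F$ of the brick polyhedron spanned by the ascent directions, and by \Cref{isaface} this $F$ equals $\mathcal{B}^I(Q_\nu,w_\nu)$.

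First I would identify the facet-ridge structure of $\mathcal{SC}(Q_{\nu,I},w_\nu)$ combinatorially: its facets are the $\nu$-trees refining $I=T\setminus A$, and flips correspond to the increasing/decreasing rotations available at the marked ascents. I would then invoke \Cref{isaface} to conclude that $F:=\mathcal{B}^I(Q_\nu,w_\nu)$ is an honest face of the $\nu$-brick polyhedron $\mathcal{B}(Q_\nu,w_\nu)$, hence itself a polyhedron. The crucial observation is that $F$ is in fact \emph{bounded}: the functional $f$ from \Cref{exlf} is strictly positive on every root $\beta_t=r(T,t)$ for $t\in T\setminus A$, which includes the generators of the relevant recession directions, so within the affine span $\widetilde V_F$ the recession cone is trivial. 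More precisely, the Bruhat cone contribution to $F$ lives in directions on which $f$ vanishes only for the marked ascents, and restricting to $\widetilde V_F$ kills the unbounded rays; thus $F$ is a bounded polytope.

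Next I would verify that $\mathcal{B}^I(Q_\nu,w_\nu)$, being a bounded face, realizes $\mathcal{SC}(Q_{\nu,I},w_\nu)$ as the boundary of a polytope. The dimension count matches: by \Cref{inner} the face corresponding to $(T,A)$ has dimension $|A|$, and $V_F=\operatorname{span}\{r(T,a)\mid a\in A\}$, so $\dim F = |A|$, agreeing with the expected dimension of the (realization of the) subword complex $\mathcal{SC}(Q_{\nu,I},w_\nu)$, whose facets are the $\nu$-trees containing $I$. The boundary complex of this bounded polytope $F$ then carries exactly the facet-ridge incidences of the subword complex, giving a polytopal (hence spherical) realization. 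Since a subword complex that is realizable as the boundary of a polytope is a sphere, sphericity follows immediately.

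The main obstacle I anticipate is establishing the boundedness of the face $F$ cleanly, rather than just its being a face. One must argue that after intersecting the brick polyhedron with the affine span $\widetilde V_F$, no rays of the Bruhat cone survive. This requires understanding how the Bruhat cone $\mathcal{C}^+(w_\nu,\operatorname{Dem}(Q_\nu))$ meets $V_F$: the nonflippable letters (which give the unbounded rays, as noted in \Cref{exENEENbp}) must lie outside $V_F$, which is precisely guaranteed because $f$ is strictly positive on the corresponding roots $r(T,t)$ for $t\notin A$ while vanishing only on the ascent directions $r(T,a)$, $a\in A$. Making this recession-cone argument rigorous—translating the strict positivity of $f$ in \Cref{exlf} into triviality of the recession cone of $F\cap\widetilde V_F$—is the technical heart of the lemma; once boundedness is secured, the identification of $F$ with a polytopal realization of $\mathcal{SC}(Q_{\nu,I},w_\nu)$ and the conclusion of sphericity are routine.
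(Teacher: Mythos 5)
Your proposal takes a genuinely different route from the paper, and as written it has real gaps. The paper's own proof is short and soft: it observes that the reverse-containment poset on $\{J : I \subseteq J\}$ is the face poset of the cell of the $\nu$-associahedron labelled by $I$, invokes the result of \cite{CPS19} that these cells are products of classical associahedra (hence polytopes), and realizes $\mathcal{SC}(Q_{\nu,I},w_\nu)$ as the boundary complex of the polar of that cell. No brick-polyhedron geometry enters at all. You instead try to extract sphericity from the $\nu$-brick polyhedron itself via \Cref{exlf} and \Cref{isaface}; that is not circular as far as those two inputs go, but two essential steps do not close.

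First, your boundedness argument is incomplete. The recession cone of $F=\mathcal{B}^I(Q_\nu,w_\nu)$ is $\{v\in\mathcal{C}^+(w_\nu,\operatorname{Dem}(Q_\nu)) : f(v)=0\}$, and the strict positivity of $f$ on $r(T,t)$ for $t\notin A$ only excludes recession directions on which $f$ is positive. But $V_F=\operatorname{span}\{r(T,a):a\in A\}$ is precisely where $f$ vanishes, so ``restricting to $\widetilde V_F$'' does not rule out unbounded rays lying in $\operatorname{cone}\{r(T,a):a\in A\}$. One needs an extra argument that no nonzero such direction lies in the Bruhat cone (for instance by intersecting with $\operatorname{cone} R(T')$ for the facets $T'$ obtained by flipping the ascents in $A$, where the corresponding roots appear negated); your sketch asserts this rather than proving it.

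Second, and more seriously, the step you call routine is the actual content and risks circularity. Knowing that $F$ is a bounded polytope whose vertices are the $b(J)$ with $I\subseteq J$ does not yet yield that the face lattice of $F$ is anti-isomorphic to the face poset of $\mathcal{SC}(Q_{\nu,I},w_\nu)$: one must still show that \emph{every} face of the subword complex containing $I$ occurs as a face of $F$, and that distinct faces give distinct faces of $F$. In the paper this correspondence comes from the Pilaud--Stump realization theorem (\Cref{pilaudstump}, via \Cref{cors} and \Cref{star1}), whose application presupposes sphericity --- i.e.\ presupposes \Cref{lemmaspherical}. So you must either import that machinery (circular) or establish the full face correspondence directly from \Cref{faceprop} together with a boundedness and uniqueness analysis for every interior face $I'\supseteq I$, a substantial argument your proposal leaves implicit. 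A smaller slip: even once the correspondence is in hand, the subword complex is realized as the boundary of the \emph{polar} of $F$, not of $F$ itself.
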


\begin{proof}
If $I$ is an interior face then the set of faces $\{J \in \mathcal{SC}(Q_\nu,w_\nu) : I \subseteq J\}$, ordered by reverse containment is the face poset of the face of the $\nu$-associahedron corresponding to $I$. But the reverse containment poset on $\{J \in \mathcal{SC}(Q_\nu,w_\nu) : I \subseteq J\}$ is isomorphic to the reverse containment poset of faces of $\mathcal{SC}(Q_{\nu, I},w_\nu)$. And by \cite[Proposition 5.16]{CPS19} the cells of the $\nu$-associahedron are known to be products of classical associahedra, in particular they are polytopes. And $\mathcal{SC}(Q_{\nu, I},w_\nu)$ can be realized as the boundary complex of the polar of the polytope of the corresponding cell. Hence $\mathcal{SC}(Q_{\nu,I},w_\nu)$ is polytopal, hence spherical.
\end{proof}

\begin{remark}\label{2kisses}
Observe, a single pipe cannot have more than two turns inside the Ferrers diagram $F_\nu$. Otherwise, there would exist at least three vertices, as shown in Figure~\ref{2turns} (Left) in the $\nu$-tree, but then the two red points would be $\nu$-incompatible, which contradicts the definition of a $\nu$-tree as a maximal set of $\nu$-compatible elements. 
\end{remark}

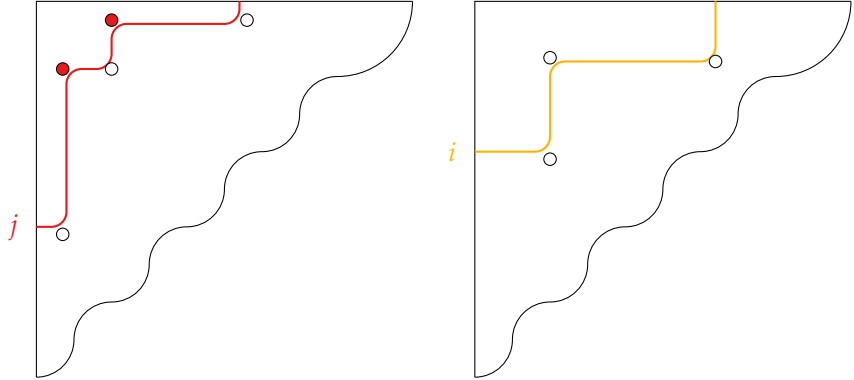
\begin{figure}[h!]
  \centering
\begin{tabular}{cc}
    \begin{tikzpicture}[scale=1]
    \draw (0,-5) -- (0,0) -- (5,0);
    
    \draw (0,-5) to[out=0,in=-90] (0.5,-4.5)
                 to[out=90,in=180] (1,-4)
                 to[out=0,in=-90] (1.5,-3.5)
                 to[out=90,in=180] (2,-3)
                 to[out=0,in=-90] (2.5,-2.5)
                 to[out=90,in=180] (3,-2)
                 to[out=0,in=-90] (3.5,-1.5)
                 to[out=90,in=180] (4,-1)
                 to[out=0,in=-90] (5,0);

\draw[red, thick] (0,-3) -- (0.2,-3) arc (-90:0:0.2) -- (0.4,-1.1) arc (180:90:0.2) -- (0.8,-0.9) arc (-90:0:0.2) -- (1,-0.5) arc (180:90:0.2) -- (2.5,-0.3)  arc (-90:0:0.2) --(2.7,0);

\node[red] at (-0.3,-3) {$j$};
 \draw (1,-0.9) node [scale=0.5, circle, draw,fill=white,anchor=center]{};

  \draw (0.35,-0.9) node [scale=0.5, circle, draw,fill=red,anchor=center]{};
 \draw (1,-0.25) node [scale=0.5, circle, draw,fill=red,anchor=center]{};
 \draw (0.35,-3.1) node [scale=0.5, circle, draw,fill=white,anchor=center]{};
 \draw (2.8,-0.25) node [scale=0.5, circle, draw,fill=white,anchor=center]{};
    \end{tikzpicture}
    
&
    
    \begin{tikzpicture}[scale=1]

    \draw (0,-5) -- (0,0) -- (5,0);
    
    \draw (0,-5) to[out=0,in=-90] (0.5,-4.5)
                 to[out=90,in=180] (1,-4)
                 to[out=0,in=-90] (1.5,-3.5)
                 to[out=90,in=180] (2,-3)
                 to[out=0,in=-90] (2.5,-2.5)
                 to[out=90,in=180] (3,-2)
                 to[out=0,in=-90] (3.5,-1.5)
                 to[out=90,in=180] (4,-1)
                 to[out=0,in=-90] (5,0);

\draw[orange, thick] (0,-2) -- (0.8,-2) arc (-90:0:0.2) -- (1,-1) arc (180:90:0.2) -- (3,-0.8) arc (-90:0:0.2) -- (3.2,0);

\node[orange] at (-0.3,-2) {$i$};
 \draw (1,-0.75) node [scale=0.5, circle, draw,fill=white,anchor=center]{};
 \draw (3.2,-0.8) node [scale=0.5, circle, draw,fill=white,anchor=center]{};
 \draw (1,-2.1) node [scale=0.5, circle, draw,fill=white,anchor=center]{};

    \end{tikzpicture}
  \end{tabular}
\caption{Left: A pseudoline with $3$ turns exhibiting red vertices in $\nu$-incompatible position, Right: A pseudoline with $2$ turns showing all vertices in $\nu$-compatible position.}
\label{2turns}
\end{figure}

\begin{lemma}[Root Independent]\label{proofA1}
Let $I$ be an interior face of $\mathcal{SC}(Q_\nu, w_\nu)$, $T$ a $\nu$-tree and $A\subseteq T$ a subset of ascents of $T$ such that $I=T \setminus A$. Then,
\begin{enumerate}
\item $\{$$r$$(T,a) : a \in A\}$ is linearly independent. \label{L1}
\item $\mathcal{SC}(Q_{\nu,I}, w_\nu)$ is root independent.\label{L2}
\end{enumerate}

\end{lemma}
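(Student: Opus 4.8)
The plan is to deduce the root independence statement \eqref{L2} from the linear independence statement \eqref{L1}, and to spend the real effort on \eqref{L1}.

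For the reduction \eqref{L1}$\Rightarrow$\eqref{L2}, I would first observe that the facets of $\mathcal{SC}(Q_{\nu,I},w_\nu)$ are exactly the sets $J\setminus I$, where $J$ is a facet of $\mathcal{SC}(Q_\nu,w_\nu)$ with $I\subseteq J$. A direct inspection of the root function shows that deleting the letters indexed by $I$ does not affect it on the surviving positions: for $k\notin I$ with $I\subseteq J$, the product $\prod Q_{\{1,\dots,k-1\}\setminus J}(\alpha_{q_k})$ is unchanged, because the positions of $I$ lie in $J$ and are therefore already excluded from it. Hence the root configuration of the facet $J\setminus I$ in $\mathcal{SC}(Q_{\nu,I},w_\nu)$ equals the modified root configuration $R^I(J)=\{r(J,j):j\in J\setminus I\}$. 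Taking $J=T$ gives the facet $A=T\setminus I$ whose root configuration is $\{r(T,a):a\in A\}=\{\beta_a:a\in A\}$. Since linear independence of the root configuration is preserved under flips (a flip replaces one root by its reflection through another, an invertible linear map, so $|R(J)|=|R(J')|$ and the two configurations are related by an isomorphism), root independence may be checked on a single facet. Thus \eqref{L1}, applied to the facet $A$, yields \eqref{L2}.

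It remains to prove \eqref{L1}, and here I would argue by induction on $k=|A|$, the cases $k\le 1$ being immediate. For $k\ge 2$ let $\bar a\in A$ be the northeast-most ascent, exactly as in the proof of Lemma~\ref{exlf}, and set $\bar A=A\setminus\{\bar a\}$. As $\bar A$ is again a subset of the ascents of $T$ with $|\bar A|=k-1$, the induction hypothesis gives that $\{\beta_a:a\in\bar A\}$ is linearly independent, so it suffices to show $\beta_{\bar a}\notin\operatorname{span}\{\beta_a:a\in\bar A\}$. Writing $\beta_t=e_{i_t}-e_{j_t}$ as in Definition~\ref{conee}, this becomes a statement about the edges $\{i_t j_t\}$ of the contact graph of $P(T)$: a family of vectors of the form $e_i-e_j$ is linearly independent precisely when the corresponding edges form a forest. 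It is therefore enough to exhibit, for the node $\bar a$, one of its two pipes that occurs in no other ascent root $\beta_a$, $a\in\bar A$; this private coordinate kills $c_{\bar a}$ in any vanishing linear combination and closes the induction.

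The main obstacle is precisely this last claim about the pipe combinatorics. Writing $\beta_{\bar a}=e_{i_2}-e_{i_3}$ with $i_2,i_3$ the two pipes meeting at the node $\bar a$ (the pipes labelled $i_2,i_3$ in Figure~\ref{figproof}), I would show that the pipe $i_3$ leaving $\bar a$ to the south participates in no $\beta_a$ with $a\in\bar A$. The argument should combine the maximality of $\bar a$ with Remark~\ref{2kisses}: since each pipe has at most two turns inside $F_\nu$, the pipe $i_3$ can form the elbow of another ascent only at a node lying weakly northeast of $\bar a$, contradicting the choice of $\bar a$ as the northeast-most ascent. Making this step fully rigorous---tracking exactly which pipes thread through the elbows attached to the ascents of $T$, and verifying that the northeast-most choice forces the distinguishing coordinate---is the technical heart of the proof. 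Once it is in place the induction closes and \eqref{L1}, and hence \eqref{L2}, follows.
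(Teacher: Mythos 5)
Your overall architecture coincides with the paper's: \eqref{L2} is deduced from \eqref{L1} by evaluating the root configuration at the facet $A$ of $\mathcal{SC}(Q_{\nu,I},w_\nu)$ (together with the standard fact that root independence may be tested on a single facet, which the paper also invokes without comment), and \eqref{L1} is proved by induction on $|A|$ by isolating one ascent whose root contains a coordinate occurring in no other ascent's root. The only substantive difference is the choice of distinguished ascent: you take the northeast-most $\bar a\in A$ and claim its south pipe $i_3$ gives the private coordinate, whereas the paper takes the ascent $a_2$ whose root $e_i-e_j$ has minimal first index $i$ and uses $e_i$: minimality (plus positivity of the ascent roots) rules out any summand of the form $e_k-e_i$, and Remark~\ref{2kisses} rules out a second ascent whose root has first index $i$.

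The gap is exactly the step you yourself flag as ``the technical heart'': you assert but do not prove that $i_3$ occurs in no $\beta_a$ with $a\in\bar A$, and the justification you sketch (``$i_3$ can form the elbow of another ascent only at a node weakly northeast of $\bar a$'') is not self-evident --- a priori the dangerous elbow is the one \emph{south} of $\bar a$, where pipe $i_3$ turned north before reaching $\bar a$, and such a node is not northeast of $\bar a$, so maximality alone does not dispose of it. The claim is nevertheless true, and Remark~\ref{2kisses} closes it: since $\bar a$ is an ascent, pipe $i_3$ turns north-to-east at $\bar a$ and then east-to-north at the first node $c\in T$ directly east of $\bar a$ in the same row (such a node exists because $\bar a$ is an ascent, cf.\ Figure~\ref{figproof}); these are already two turns inside $F_\nu$, so by Remark~\ref{2kisses} they are the \emph{only} elbows of $T$ on pipe $i_3$ --- in particular its earlier turn lies outside $F_\nu$ and there is no node of $T$ on pipe $i_3$ south of $\bar a$ --- and $c$, being weakly northeast of $\bar a$, does not belong to $A$. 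With this paragraph inserted your induction closes and the argument is complete; as written, however, the decisive combinatorial verification is missing.
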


\begin{proof}

To show \eqref{L1}, we proceed by induction on $n=|A|$. The statement is clear for $n=1$. Suppose that $n \geq 2$ and that the statement holds for $n-1$.

Let $a_2 \in A$ such that $r(T,a_2) = e_i - e_j$ with the smallest possible $i$. Since $a_2 \in A$ is an ascent, there is a node to the North and one to the East. By Remark~\ref{2kisses}, there cannot be a node to the left of $a_2$, as illustrated in Figure~\ref{pseudolines}. Furthermore, since $i$ is chosen to be minimal, $a_2$ is unique. Additionally, there can be no vertex on the line segment between the vertices $a_2$ and~$a_3$, following the same argument.

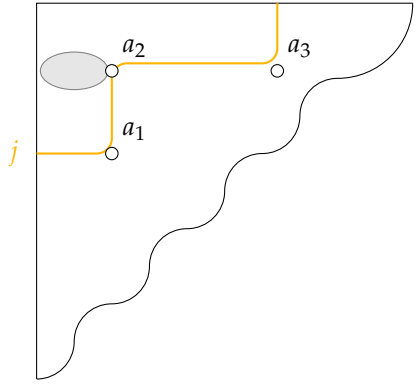
\begin{figure}[h]
  \centering
        \begin{tikzpicture}[scale=1]

    \draw (0,-5) -- (0,0) -- (5,0);
    
    \draw (0,-5) to[out=0,in=-90] (0.5,-4.5)
                 to[out=90,in=180] (1,-4)
                 to[out=0,in=-90] (1.5,-3.5)
                 to[out=90,in=180] (2,-3)
                 to[out=0,in=-90] (2.5,-2.5)
                 to[out=90,in=180] (3,-2)
                 to[out=0,in=-90] (3.5,-1.5)
                 to[out=90,in=180] (4,-1)
                 to[out=0,in=-90] (5,0);

\draw[orange, thick] (0,-2) -- (0.8,-2) arc (-90:0:0.2) -- (1,-1) arc (180:90:0.2) -- (3,-0.8) arc (-90:0:0.2) -- (3.2,0);

\node[orange] at (-0.3,-2) {$j$};
\fill[gray!20] (0.5,-0.9) ellipse (0.45 and 0.25);
 \draw[gray] (0.5,-0.9) ellipse (0.45 and 0.25);
 \draw (1,-0.9) node [scale=0.5, circle, draw,fill=white,anchor=center]{};
 \draw (3.2,-0.9) node [scale=0.5, circle, draw,fill=white,anchor=center]{};
 \draw (1,-2) node [scale=0.5, circle, draw,fill=white,anchor=center]{};

\node[black] at (1.3,-0.6) {$a_2$}; 
\node[black] at (3.5,-0.6) {$a_3$}; 
\node[black] at (1.3,-1.7) {$a_1$}; 
    \end{tikzpicture}
  \caption{No node in gray area.}
  \label{pseudolines}
\end{figure}

Now, suppose there exists a linear combination such that $\sum_{a\in A} c_a r(T,a)=0$. Since $r(T,a_2)=e_i-e_j$ with $i$ being minimal, there is no summand of the form $e_k-e_i$, hence $c_{a_2}=0$. Therefore $\sum c_a r(T,a)$ is a sum of $n-1$ summands and by induction hypothesis, we obtain $c_{a}=0$ for all $a \in A$. So \eqref{L1} follows.

To prove \eqref{L2} it is enough to find a facet of $\mathcal{SC}(Q_{\nu, I}, w_\nu)$ whose root configuration is linearly independent. Taking the facet $A \subseteq \mathcal{SC}(Q_{\nu,I}, w_\nu)$ and applying \eqref{L1} concludes~\eqref{L2}.
\end{proof}

\begin{corollary}\label{star1}
Let $I=T \setminus A$, where $T$ is a $\nu$-tree and $A\subseteq T$ a subset of ascents.
The subword complex $\mathcal{SC}(Q_{\nu, I}, w_\nu)$ is realized by the polar of the brick polytope $\mathcal{B}(Q_{\nu, I}, w_\nu)$. In particular~$\mathcal{B}(Q_{\nu, I}, w_\nu)$ is a bounded polytope.
\end{corollary}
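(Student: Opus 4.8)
The plan is to combine the two structural results just established, \Cref{sphericalandrootindependent} and \Cref{cors}, into the desired realization. First I would invoke \Cref{sphericalandrootindependent}, which tells us that for $I=T\setminus A$ with $T$ a $\nu$-tree and $A\subseteq T$ a subset of ascents, the subword complex $\mathcal{SC}(Q_{\nu,I},w_\nu)$ is both spherical and root independent. These are precisely the two hypotheses needed to apply \Cref{cors}, the version of the Pilaud--Stump realization theorem that does not require $w=w_\circ$.

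Applying \Cref{cors} to $\mathcal{SC}(Q_{\nu,I},w_\nu)$ then immediately yields that this subword complex is realized by the polar of the brick polytope $\mathcal{B}(Q_{\nu,I},w_\nu)$. This gives the first assertion of the corollary directly. The second assertion, that $\mathcal{B}(Q_{\nu,I},w_\nu)$ is a bounded polytope, follows because the realization via the polar of a (spherical) subword complex produces an honest polytope rather than an unbounded polyhedron: root independence guarantees that the brick vectors are the vertices of a full-dimensional polytope in the appropriate quotient space, and the spherical property means the subword complex has no missing facets corresponding to rays at infinity. Concretely, once \Cref{cors} applies, $\mathcal{B}(Q_{\nu,I},w_\nu)$ is identified with a genuine brick polytope of the Pilaud--Stump type, which is always bounded.

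This argument is essentially a bookkeeping assembly rather than a computation, so I do not expect a serious obstacle; the conceptual work has already been front-loaded into \Cref{sphericalandrootindependent}. The one point requiring slight care is making sure the hypotheses of \Cref{cors} match exactly: \Cref{cors} asks for a spherical, root independent subword complex with $w$ not necessarily equal to $w_\circ$, and \Cref{sphericalandrootindependent} supplies exactly spherical and root independent for the complex $\mathcal{SC}(Q_{\nu,I},w_\nu)$. Since $w_\nu$ is in general not the longest element, it is important that we are using the $w\neq w_\circ$ version rather than \Cref{pilaudstump} directly, which is why \Cref{cors} was set up beforehand. With that alignment verified, the corollary is immediate.
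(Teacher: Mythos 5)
Your proposal is correct and matches the paper's proof exactly: the paper likewise applies \Cref{sphericalandrootindependent} to get that $\mathcal{SC}(Q_{\nu,I},w_\nu)$ is spherical and root independent, and then concludes via \Cref{cors}. Your additional remarks on why boundedness follows are consistent with what the paper leaves implicit.
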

\begin{proof}
By~\Cref{sphericalandrootindependent}, $\mathcal{SC}(Q_{\nu, I}, w_\nu)$ is spherical and root independent. Therefore, it is realized by the polar of $\mathcal{B}(Q_{\nu, I}, w_\nu)$ by Corollary \ref{cors}.
\end{proof}

\subsubsection{Characterization of bounded faces}
We are now ready to characterize the bounded faces of the $\nu$-brick polyhedron.

\begin{corollary}\label{thm_bounded_faces}
Let $T$ a $\nu$-tree, $A\subseteq T$ be a subset of ascents, and $I=T \setminus A$ be the corresponding interior face of $\mathcal{SC}(Q_{\nu}, w_\nu)$. Then, $\mathcal{B}^I(Q_{\nu}, w_\nu)$ is a bounded face of the $\nu$-brick polyhedron. Moreover, 
\begin{align*}
\mathcal{B}^I(Q_{\nu}, w_\nu) = \text{conv} \{ b(J)\mid I \subseteq J \text{ a facet of $\mathcal{SC}(Q_{\nu}, w_\nu)$}\}
\end{align*}
and its face poset is the reverse containment poset on the set  $\{J \in \mathcal{SC}(Q_\nu,w_\nu) : I \subseteq J\}$.
\end{corollary}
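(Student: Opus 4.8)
The plan is to assemble this corollary directly from the three main results already established: Proposition~\ref{isaface}, which shows $\mathcal{B}^I(Q_\nu,w_\nu)$ is a face; Corollary~\ref{star1}, which establishes boundedness; and Proposition~\ref{faceprop}, which identifies the vertices of a face and its internal structure. The strategy is modular: prove each of the three assertions (it is a bounded face, its convex-hull description, its face poset) by citing the appropriate prior result, taking care that the hypotheses match.

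First I would note that since $T$ is a $\nu$-tree and $A\subseteq T$ is a subset of ascents, the hypotheses of Proposition~\ref{isaface} are exactly met, so $\mathcal{B}^I(Q_\nu,w_\nu)$ is a face $F$ of the $\nu$-brick polyhedron. For boundedness, the key observation is that a face $F=\mathcal{B}^I(Q_\nu,w_\nu)$ of the brick polyhedron is bounded precisely when its recession cone is trivial, i.e.\ when $\mathcal{C}^{I,+}=\{0\}$; by Corollary~\ref{star1}, the subword complex $\mathcal{SC}(Q_{\nu,I},w_\nu)$ is realized by the polar of the \emph{bounded} polytope $\mathcal{B}(Q_{\nu,I},w_\nu)$, which is combinatorially isomorphic to $\mathcal{B}^I(Q_\nu,w_\nu)$, forcing $F$ to be bounded.

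For the convex-hull description, I would invoke Proposition~\ref{faceprop}\eqref{p2}, which states that for a facet $J'$ we have $I\subseteq J'$ if and only if $b(J')\in F$; combined with the fact that a bounded polyhedron is the convex hull of its vertices and that the vertices of $F$ are exactly the brick vectors $b(J')$ lying in $F$, this yields
\begin{align*}
\mathcal{B}^I(Q_{\nu}, w_\nu) = \operatorname{conv}\{ b(J)\mid I \subseteq J \text{ a facet of } \mathcal{SC}(Q_\nu,w_\nu)\}.
\end{align*}
Finally, for the face poset, I would again use Corollary~\ref{star1}: since $\mathcal{SC}(Q_{\nu,I},w_\nu)$ is realized by the polar of $\mathcal{B}(Q_{\nu,I},w_\nu)\cong\mathcal{B}^I(Q_\nu,w_\nu)$, the face lattice of the polytope is anti-isomorphic to the face lattice of the subword complex, and the faces of $\mathcal{SC}(Q_{\nu,I},w_\nu)$ correspond bijectively to the facets $J$ of $\mathcal{SC}(Q_\nu,w_\nu)$ containing $I$, ordered by reverse containment.

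The main obstacle I anticipate is bookkeeping the combinatorial isomorphism $\mathcal{B}(Q_{\nu,I},w_\nu)\cong\mathcal{B}^I(Q_\nu,w_\nu)$ cleanly enough that the face-poset transfer is rigorous: one must verify that deleting the letters indexed by $I$ from $Q_\nu$ (to form $Q_{\nu,I}$) induces a bijection between facets of $\mathcal{SC}(Q_{\nu,I},w_\nu)$ and facets of $\mathcal{SC}(Q_\nu,w_\nu)$ containing $I$, and that this bijection is compatible with the brick-vector construction so that the polar-duality face correspondence descends to the reverse-containment statement. Everything else is a direct appeal to the quoted results, so this identification is where the real care is needed.
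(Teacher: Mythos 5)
Your proposal is correct and follows essentially the same route as the paper: the face claim comes from Proposition~\ref{isaface}, and boundedness, the convex-hull description, and the face poset are all transferred from the brick polytope $\mathcal{B}(Q_{\nu,I},w_\nu)$ via Corollary~\ref{star1} and the combinatorial isomorphism $\mathcal{B}(Q_{\nu,I},w_\nu)\cong\mathcal{B}^I(Q_\nu,w_\nu)$. The one place you flag as needing care --- that this isomorphism respects vertices (facets $J$ of $\mathcal{SC}(Q_{\nu,I},w_\nu)$ corresponding to facets $I\cup J\supseteq I$ of $\mathcal{SC}(Q_\nu,w_\nu)$), edge directions, and local cones --- is exactly what the paper settles by citing \cite[Remark~4.11]{bpolyhedra}, so no new argument is needed there.
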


\begin{proof}
By \cite[Remark 4.11]{bpolyhedra} the brick polytope $\mathcal{B}(Q_{\nu, I}, w_\nu)$ and the brick polyhedron $\mathcal{B}^I(Q_{\nu}, w_\nu)$ share the following properties:
\begin{enumerate}
\item Their vertices are in correspondence 
$$J \in \mathcal{SC}(Q_{\nu, I}, w_\nu) \Longleftrightarrow I \cup J \in \mathcal{SC}^I(Q_{\nu}, w_\nu).$$
\item Their edge directions are the same, although they may have different lengths (roots in the root configuration and modified root configuration are the same).
\item Their local cones at the vertices are the same.
\end{enumerate}
In particular, $\mathcal{B}(Q_{\nu, I}, w_\nu)$ and $\mathcal{B}^I(Q_{\nu}, w_\nu)$ are combinatorially isomorphic, and in particular bounded (Corollary \ref{star1}). Since $\mathcal{SC}(Q_{\nu, I}, w_\nu)$ is realized by the polar of~$\mathcal{B}(Q_{\nu, I}, w_\nu)$ by Corollary~\ref{star1}, transforming the facets $J \in \mathcal{SC}(Q_{\nu, I}, w_\nu)$ to facets~$I \cup J \in \mathcal{SC}^I(Q_{\nu}, w_\nu)$ we obtain:
\begin{align*}
\mathcal{B}^I(Q_{\nu}, w_\nu) = \text{conv} \{b(J) \mid I \subseteq J \text{ a facet of $\mathcal{SC}(Q_\nu,w_\nu)$}\}.
\end{align*}
Furthermore, since the face poset of $\mathcal{B}(Q_{\nu,I}, w_\nu)$ is the reverse containing poset of faces $J\in \mathcal{SC}(Q_{\nu, I}, w_\nu)$, then adding $I$ to each face we obtain that the face poset of $\mathcal{B}^I(Q_{\nu}, w_\nu)$ is the reverse containment poset on the set $\{J' \in \mathcal{SC}(Q_\nu, w_\nu) \mid I \subseteq J'\}$ as desired.
\end{proof}

\begin{lemma}[Unique representation of bounded faces]\label{lemmaforuni}\label{bounded}
Let $T$ be a $\nu$-tree, $A\subseteq T$ a subset of ascents, $I=T \setminus A$ and $F=\mathcal{B}^I(Q_\nu, w_\nu)$ be the corresponding bounded face of the $\nu$-brick polyhedron $\mathcal{B}(Q_\nu, w_\nu)$. Then
\begin{enumerate}
\item $T=J_{F,\text{min}}$ and $A=J^F_{F,\text{min}}$. \label{u2}
\item If $F=\mathcal{B}^{I'}(Q_\nu, w_\nu)$ for $I'=T' \setminus A'$ where $T'$ is a $\nu$-tree and $A'\subseteq T'$ a subset of ascents, then $T=T'$, $A=A'$ and $I=I'$. \label{u3}
\end{enumerate}
\end{lemma}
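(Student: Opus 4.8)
The plan is to establish the two parts by exploiting the minimal-element machinery from Lemma~\ref{minimalelement} together with the characterization of vertices in Lemma~\ref{ifflemma}. For part~\eqref{u2}, I would first argue that the $\nu$-tree $T$ is exactly the minimizer $J_{F,\min}$ of the linear functional $\eta$ among the facets contributing to $F$. Recall that the vertices of $F=\mathcal{B}^I(Q_\nu,w_\nu)$ are precisely the brick vectors $b(J')$ with $I\subseteq J'$, by Proposition~\ref{faceprop}\eqref{p2}. Since $\eta$ takes the value $\langle\eta,\alpha\rangle>0$ on positive roots and $<0$ on negative roots, and since moving from one facet to a Bruhat-larger one along an increasing flip changes the brick vector by a positive multiple of a positive root (cf.\ the flip formulas in Lemma~\ref{ifflemma}), the facet minimizing $\eta$ must be the one all of whose relevant roots $r(J,j)$ are positive on the flippable positions. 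I expect this minimizer to be exactly the $\nu$-tree $T$ itself, because $T$ is the unique facet in $\mathcal{SC}^I(Q_\nu,w_\nu)$ containing $I$ whose ascent-roots $r(T,a)$ for $a\in A$ all lie in $\Phi^+$.

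Once $T=J_{F,\min}$ is established, the identification $A=J^F_{F,\min}$ should follow almost immediately from the definition $J^F:=\{j\in J: r(J,j)\in V_F\}$. By Lemma~\ref{ifflemma} applied with $J=T$, we have $I=T\setminus J^F$, and since we are given $I=T\setminus A$, the equality $A=J^F=J^F_{F,\min}$ follows provided the decomposition $I=T\setminus A$ is forced. Here the key input is that $V_F$, the span of $F$, coincides with $\operatorname{span}\{r(T,a)\mid a\in A\}$; this is exactly the space appearing in the proof of Proposition~\ref{isaface}, and the linear independence of $\{r(T,a)\mid a\in A\}$ from Lemma~\ref{proofA1}\eqref{L1} guarantees that $\dim V_F=|A|=\dim F$, pinning down $V_F$ precisely.

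For part~\eqref{u3}, the strategy is to observe that the data $(T,A)$ is recovered intrinsically from the face $F$ alone, with no reference to the chosen representation. Indeed, by part~\eqref{u2}, $T=J_{F,\min}$ and $A=J^F_{F,\min}$ are determined purely by $F$ (via the minimizer of $\eta$ and the span $V_F$), and likewise $T'=J_{F,\min}$ and $A'=J^F_{F,\min}$. Since $J_{F,\min}$ is unique by Lemma~\ref{minimalelement} and $J^F_{F,\min}$ is then determined, we immediately get $T=T'$ and $A=A'$, whence $I=T\setminus A=T'\setminus A'=I'$. The uniqueness is thus a direct corollary of part~\eqref{u2}.

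The main obstacle I anticipate is the first step of part~\eqref{u2}: rigorously showing that the particular $\nu$-tree $T$ in the given representation is the $\eta$-minimizer, rather than some other facet in $\mathcal{SC}^I(Q_\nu,w_\nu)$. This requires confirming that among all facets $J'\supseteq I$, the tree $T$ is the Bruhat-minimal one, equivalently that every increasing flip from $T$ (at an ascent in $A$) strictly increases $\eta$ while staying inside $F$. I would verify this by combining the flip formula $b(J_1)=b(J_0)+c\,r(J_0,j_0)$ with $c>0$ from the proof of Lemma~\ref{ifflemma} and the sign condition $\langle\eta,r(T,a)\rangle>0$ for ascent-roots, which hold because the ascent-roots $r(T,a)=e_i-e_j$ with $i<j$ are positive roots by the geometry of $\nu$-trees (Remark~\ref{2kisses} and the structure in Figure~\ref{figproof}). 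Once this sign analysis is in place, the rest of the argument is bookkeeping.
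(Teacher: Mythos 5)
Your proposal is correct and follows essentially the same route as the paper: you identify $T$ with $J_{F,\min}$ by observing that $T$ is the unique facet containing $I$ all of whose positions in $T\setminus I$ are increasingly flippable (equivalently, whose ascent-roots are positive, so that $b(T)$ minimizes $\eta$ on $F$ by the local cone property), then read off $A=J^F_{F,\min}$, and obtain part~(2) as an immediate corollary since $(J_{F,\min},J^F_{F,\min})$ is intrinsic to $F$. The paper's proof is just a terser version of this same argument.
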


\begin{proof}
Note that $J_{F,\text{min}}$ is the unique facet $I\subseteq J$ such that $j \in J \setminus I$ is increasingly flippable. Since $T$ satisfies this property, then $T=J_{F,\text{min}}$. Moreover, $J^F_{F,\text{min}} = J_{F,\text{min}}\setminus I = T \setminus I = A$. This proves property~\eqref{u2}.

Property \eqref{u3} follows from \eqref{u2}.
\end{proof}

\begin{corollary}\label{star3}
The bounded faces of the $\nu$-brick polyhedron $\mathcal{B}(Q_\nu, w_\nu)$  are exactly, the~$\mathcal{B}^I(Q_\nu, w_\nu)$, for $I=T\setminus A$, $T$ a $\nu$-tree and $A\subseteq T$ a subset of ascents.
\end{corollary}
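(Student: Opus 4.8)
The plan is to prove the two inclusions of the claimed equality of sets. The inclusion ``$\supseteq$'' is immediate: \Cref{thm_bounded_faces} already shows that whenever $I = T\setminus A$ for a $\nu$-tree $T$ and a subset of ascents $A\subseteq T$, the modified brick polyhedron $\mathcal{B}^I(Q_\nu,w_\nu)$ is a bounded face of $\mathcal{B}(Q_\nu,w_\nu)$. So the work is entirely in the converse ``$\subseteq$'', namely that every bounded face arises in this way.

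To this end, I would start with an arbitrary bounded face $F$ of $\mathcal{B}(Q_\nu,w_\nu)$. By \Cref{minimalelement} there is a unique facet $J_{F,\min}$ with $b(J_{F,\min})\in F$ minimizing the generic functional $\eta$; set $T:=J_{F,\min}$, which is a $\nu$-tree since the facets of $\mathcal{SC}(Q_\nu,w_\nu)$ are exactly the $\nu$-trees. Applying \Cref{faceprop} to the vertex $b(T)\in F$ yields $F=\mathcal{B}^I(Q_\nu,w_\nu)$ for $I=T\setminus T^F$, where $T^F=\{t\in T: r(T,t)\in V_F\}$; put $A:=T^F=T\setminus I$, so that $I=T\setminus A$ by construction (this representative depends only on $F$ by \Cref{ifflemma}). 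The entire statement thus reduces to showing that $A$ is a subset of ascents of $T$.

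The heart of the argument is this reduction. Fix $a\in A=T^F$, so $r(T,a)\in V_F$. By the local cone property \cite[Theorem 4.4]{bpolyhedra}, the local cone of $\mathcal{B}(Q_\nu,w_\nu)$ at $b(T)$ is generated by $\{r(T,t):t\in T\}$, and intersecting with $\widetilde V_F$ shows that the edges of $F$ issuing from $b(T)$ are exactly those in directions $r(T,t)$ with $t\in A$. If $a$ were non-flippable, then $r(T,a)$ would span a genuine ray of $\mathcal{B}(Q_\nu,w_\nu)$; since $r(T,a)\in V_F$, that ray would lie in $F$, contradicting boundedness of $F$. Hence $a$ is flippable, producing a facet $T'$ with $I\subseteq T'$, so $b(T')\in F$ by \Cref{faceprop}\eqref{p2}. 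Because $b(T)=b(J_{F,\min})$ is the unique $\eta$-minimizer on $F$, we have $\langle\eta,b(T')\rangle>\langle\eta,b(T)\rangle$; but a decreasing flip strictly lowers the value of $\eta$ on the brick vector, as the brick vector then changes by a positive multiple of a negative root (compare the computation in the proof of \Cref{ifflemma}). Therefore the flip at $a$ must be increasing, i.e.\ a right rotation, so $a$ is an ascent of $T$ by \Cref{ascent}. As $a\in A$ was arbitrary, $A$ is a subset of ascents and $I=T\setminus A$ has the required form; uniqueness of the representation then follows from \Cref{bounded}.

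The hard part will be precisely this last step, and within it the simultaneous exclusion of the two ways an element of $A$ could fail to be an ascent: that $r(T,a)$ spans a ray (a non-flippable element) rather than a bounded edge, and that the flip to the adjacent vertex could be decreasing. Boundedness of $F$ rules out the first and minimality of $\eta$ at $b(T)$ rules out the second; the care needed lies in correctly invoking the Jahn--Stump local cone description and the sign conventions relating increasing and decreasing flips to positive and negative roots.
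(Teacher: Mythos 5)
Your proposal is correct and takes essentially the same route as the paper: set $T=J_{F,\min}$ and $A=J^F_{F,\min}$, use boundedness of $F$ to rule out non-flippable elements of $A$ (which would contribute rays), and use minimality of $T$ under $\eta$ to force each flip to be increasing, so that $A$ is a set of ascents. The paper's proof is simply a terser version of this same argument.
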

\begin{proof}
By Corollary \ref{thm_bounded_faces}, if $T$ is a $\nu$-tree, $A\subseteq T$ a subset of ascents, and $I=T\setminus A$ then $\mathcal{B}^I(Q_\nu, w_\nu)$ is a bounded face of~$\mathcal{B}(Q_\nu, w_\nu)$. Now let $F$ be a bounded face of~$\mathcal{B}(Q_\nu, w_\nu)$, $T=J_{F, \text{min}}$ and $A=J^F_{F, \text{min}}$. Since~$F$ is bounded then every element of~$A$ is flippable in $T$, otherwise~$F$ would contain an infinite ray. Moreover, every~$a\in A$ is increasingly flippable in $T$, because $T$ is the minimal element of the face. The face $F=\mathcal{B}^I(Q_\nu, w_\nu)$ as desired.
\end{proof}

\begin{question}\label{question_faces_brick_polyhedra}
Can we characterize the sets $I$ for which the modified brick polyhedron~$\mathcal{B}^I(Q_\nu, w_\nu)$ is a face of the brick polyhedron~$\mathcal{B}(Q_\nu, w_\nu)$? Corollary \ref{star3} gives an answer for bounded faces but we do not know an answer in general. See Example \ref{example3d}.
\end{question}

The following theorem summarizes the results of this section. 

\begin{theorem}\label{thm_main_bounded_faces}
The bounded faces of the $\nu$-brick polyhedron $\mathcal{B}(Q_\nu, w_\nu)$  are exactly the modified brick polyhedra~$\mathcal{B}^I(Q_\nu, w_\nu)$, for $I=T\setminus A$, where $T$ is a $\nu$-tree and $A\subseteq T$ is a subset of ascents.
Moreover, 
\begin{align*}
\mathcal{B}^I(Q_{\nu}, w_\nu) = \text{conv} \{ b(J)\mid I \subseteq J \text{ a facet of $\mathcal{SC}(Q_{\nu}, w_\nu)$}\},
\end{align*}
and its face poset is the reverse containment poset on the set  $\{J \in \mathcal{SC}(Q_\nu,w_\nu) : I \subseteq J\}$.
\end{theorem}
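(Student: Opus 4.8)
The plan is to assemble this theorem directly from the results already established in this subsection, since each of its three assertions has an exact counterpart above. For the classification statement---that the bounded faces of $\mathcal{B}(Q_\nu, w_\nu)$ are precisely the modified brick polyhedra $\mathcal{B}^I(Q_\nu, w_\nu)$ with $I = T \setminus A$ for a $\nu$-tree $T$ and a subset $A \subseteq T$ of ascents---I would simply invoke Corollary~\ref{star3}, which states exactly this. It is worth recalling the logic behind it: the forward direction is Corollary~\ref{thm_bounded_faces}, giving that each such $\mathcal{B}^I(Q_\nu, w_\nu)$ is a bounded face; conversely, for an arbitrary bounded face $F$, Lemma~\ref{bounded} identifies the minimal facet $T = J_{F,\text{min}}$ and the set $A = J^F_{F,\text{min}}$, and boundedness of $F$ forces each element of $A$ to be increasingly flippable in $T$, hence an ascent.

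For the ``Moreover'' part, I would restate the two conclusions of Corollary~\ref{thm_bounded_faces}. The convex hull identity
\begin{align*}
\mathcal{B}^I(Q_{\nu}, w_\nu) = \text{conv} \{ b(J)\mid I \subseteq J \text{ a facet of $\mathcal{SC}(Q_{\nu}, w_\nu)$}\}
\end{align*}
follows from the combinatorial isomorphism between $\mathcal{B}^I(Q_\nu, w_\nu)$ and the genuinely bounded brick polytope $\mathcal{B}(Q_{\nu,I}, w_\nu)$ furnished by Corollary~\ref{star1}, under which a facet $J$ of $\mathcal{SC}(Q_{\nu,I}, w_\nu)$ corresponds to the facet $I \cup J$ of $\mathcal{SC}(Q_\nu, w_\nu)$ containing $I$. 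The claim that the face poset is the reverse containment poset on $\{J \in \mathcal{SC}(Q_\nu,w_\nu) : I \subseteq J\}$ is the final assertion of that same corollary, obtained by adding $I$ to each face in the polar realization of $\mathcal{SC}(Q_{\nu,I}, w_\nu)$.

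Because the theorem merely collects previously proved facts, I do not anticipate any genuine obstacle. The only point requiring attention is bookkeeping: one must verify that the indexing conventions of Corollary~\ref{star3} and Corollary~\ref{thm_bounded_faces} agree, so that the interior face ``$I = T \setminus A$'' appearing in the classification is the same one to which the convex hull and face poset formulas apply. Since both corollaries are stated for exactly this class of $I$ (a $\nu$-tree $T$ together with a subset of its ascents), the verification is immediate and the proof consists of citing these two corollaries.
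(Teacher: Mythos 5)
Your proposal matches the paper's proof exactly: the theorem is established by citing Corollary~\ref{star3} for the classification of bounded faces and Corollary~\ref{thm_bounded_faces} for the convex hull description and the face poset statement. The additional recap you give of how those corollaries were proved is accurate but not needed for this step.
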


\begin{proof}
    This following directly from~\Cref{thm_bounded_faces} and~\Cref{star3}.
\end{proof}

\subsection{The poset of bounded faces of $\nu$-brick polyhedra}\label{finalsec}

It remains to show that the poset of bounded faces of the $\nu$-brick polyhedron is anti-isomorphic to the poset of interior faces of the $\nu$-subword complex, as stated in Theorem~\ref{main_theorem}.

\begin{proof}[Proof of~\Cref{main_theorem}]\label{finalproof}
By Corollary \ref{star3}, the bounded faces of~$\mathcal{B}(Q_\nu, w_\nu)$ are exactly the $\mathcal{B}^I(Q_\nu, w_\nu)$ for the interior face $I=T\setminus A$ for some $\nu$-tree $T$ and $A\subseteq T$ a subset of ascents. We are going to show: If $I_1$, $I_2$ are interior faces of $\mathcal{SC}(Q_\nu, w_\nu)$ then 
\begin{align*}
I_1 \subseteq I_2 \Longleftrightarrow \mathcal{B}^{I_2}(Q_\nu, w_\nu)\subseteq \mathcal{B}^{I_1}(Q_\nu, w_\nu).
\end{align*}
Corollary \ref{thm_bounded_faces} implies that if $I$ is an interior face, then
\begin{align*}
\mathcal{B}^I(Q_\nu, w_\nu) = \text{conv}\{b(J)\mid I \subseteq J \text{ facet}\}.
\end{align*}
Therefore, if $I_1 \subseteq I_2$ then $\mathcal{B}^{I_2}(Q_\nu, w_\nu)\subseteq \mathcal{B}^{I_1}(Q_\nu, w_\nu)$. Indeed all faces of $\mathcal{B}^{I_1}(Q_\nu, w_\nu)$ are of the form $\mathcal{B}^{J}(Q_\nu, w_\nu)$ for $I_1 \subseteq J$. 

We are labelling faces of the brick polyhedron by $\mathcal{B}^{J}(Q_\nu, w_\nu)$, where $J= T \setminus A$, such a labelling is unique by Lemma \ref{lemmaforuni}. Now take a face $\mathcal{B}^{I_2}(Q_\nu, w_\nu)$ of~$\mathcal{B}^{I_1}(Q_\nu, w_\nu)$ then $\mathcal{B}^{I_2}(Q_\nu, w_\nu)= \mathcal{B}^{J}(Q_\nu, w_\nu)$ for some $I_1 \subseteq J$. By uniqueness, we have $I_2=J$ and $I_1 \subseteq I_2$.
\end{proof}

\begin{corollary}
The complex of bounded faces of the $\nu$-brick polyhedron is a realization of the $\nu$-associahedron.
\end{corollary}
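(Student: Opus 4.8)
The plan is to deduce this corollary directly from~\Cref{main_theorem} together with a short verification that the bounded faces assemble into a genuine polytopal complex realizing the correct combinatorial type. First I would invoke the standard fact that, for any polyhedron, the collection of its bounded faces forms a polytopal complex (the \emph{bounded complex}); its cells are bounded polyhedra, hence polytopes, and they fit together face-to-face automatically. By~\Cref{thm_main_bounded_faces} (equivalently~\Cref{star3}), the cells of this bounded complex are exactly the modified brick polyhedra $\mathcal{B}^I(Q_\nu,w_\nu)$ for $I=T\setminus A$ with $T$ a $\nu$-tree and $A\subseteq T$ a subset of ascents, and each such $\mathcal{B}^I(Q_\nu,w_\nu)$ is bounded by~\Cref{star1}. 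This already identifies the bounded complex as a legitimate polytopal complex whose cells are indexed by the interior faces of the $\nu$-Tamari complex.

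Next I would match the combinatorial structure. By~\Cref{main_theorem}, the poset of bounded faces of $\mathcal{B}(Q_\nu,w_\nu)$ is anti-isomorphic to the poset of interior faces of $\mathcal{SC}(Q_\nu,w_\nu)\cong \mathcal{TC}(\nu)$. By~\Cref{defasso}, this is precisely the face poset of the $\nu$-associahedron. Thus the bounded complex and the $\nu$-associahedron share the same abstract face poset, and under this correspondence dimensions agree: the cell of the $\nu$-associahedron indexed by $(T,A)$ has dimension $|A|$ by~\Cref{inner}, while $\mathcal{B}^I(Q_\nu,w_\nu)$ has dimension $\dim V_F=|A|$, since $V_F$ is spanned by $\{r(T,a)\mid a\in A\}$, which is linearly independent by~\Cref{proofA1}\,\eqref{L1}.

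To upgrade the poset match to a bona fide realization as polytopal complexes, I would check that corresponding cells are combinatorially the same polytopes. By~\Cref{thm_main_bounded_faces} (via the combinatorial isomorphism recorded in~\Cref{thm_bounded_faces} and~\Cref{star1}), the cell $\mathcal{B}^I(Q_\nu,w_\nu)$ is combinatorially isomorphic to the brick polytope $\mathcal{B}(Q_{\nu,I},w_\nu)$, whose polar realizes the subword complex $\mathcal{SC}(Q_{\nu,I},w_\nu)$. On the other side, the proof of~\Cref{lemmaspherical} shows that the corresponding cell of the $\nu$-associahedron is exactly the polytope whose polar boundary complex is $\mathcal{SC}(Q_{\nu,I},w_\nu)$. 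Hence corresponding cells are combinatorially identical, and the identification of face posets is an isomorphism of polytopal complexes, establishing the realization.

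The step I expect to be the main conceptual point, rather than a computational obstacle, is the last one: the word ``realization'' requires matching not merely the abstract face lattice but the individual cells as polytopes and their mutual incidences. Everything needed is already packaged in the preceding lemmas, so the argument reduces to assembling~\Cref{main_theorem},~\Cref{thm_main_bounded_faces},~\Cref{star1}, and the cell description from~\Cref{lemmaspherical}; no further calculation is required.
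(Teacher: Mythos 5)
Your proposal is correct and follows essentially the same route as the paper, which treats this corollary as an immediate repackaging of Theorem~\ref{main_theorem} together with the characterization of bounded faces in Theorem~\ref{thm_main_bounded_faces}; the extra verifications you supply (dimension count via Lemma~\ref{proofA1}, and the cell-by-cell combinatorial match through Corollary~\ref{star1} and Lemma~\ref{lemmaspherical}) are exactly the ingredients the paper has already established earlier and relies on implicitly.
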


\section{A projection} 
Since the dimension of the $\nu$-brick polyhedron is usually much higher than the dimension of the $\nu$-associahedron, it is interesting to study suitable projections to obtain figures in the appropriate dimension. In this section, we provide an elegant projection in the case where~$\nu$ has no two consecutive north steps. 

For convenience, we consider paths of the form~$\nu=(NE^{k_n}) \cdot \cdot \cdot (NE^{k_1})$, where \mbox{$k_i\geq 1$}. Notice that we are adding a north step $N$ at the beginning and some east steps at the end of the path, but this does not affect the combinatorics of the bounded components of the brick polyhedron. 
One can double check that the $\nu$-brick polyhedron $\mathcal{B}(Q_\nu,w_\nu) \subseteq \mathbb{R}^{n+2+\sum(k_i-1)}$.  
Since the first and last coordinates of  the brick vectors $b(T)$ are constant for every $\nu$-tree~$T$, we omit them, and write $\widetilde{b}(T)$ for the resulting vectors. 
Moreover, we denote by $\widetilde{\mathcal{B}}(Q_\nu,w_\nu)$ the result of omitting the first and last coordinates of the intersection of the brick polyhedron ${\mathcal{B}} (Q_\nu,w_\nu)$ with the affine subspace defined by the first and the last coordinates being those constant numbers. In particular, 
the bounded components of ${\mathcal{B}} (Q_\nu,w_\nu)$ are in correspondence with the bounded components of  $\widetilde{\mathcal{B}} (Q_\nu,w_\nu)$. 
After removing the first and the last coordinates, we have  
\[
\widetilde{\mathcal{B}} (Q_\nu,w_\nu)\subseteq \mathbb{R}^{n+\sum(k_i-1)}=\mathbb{R}^N
\]
where $N:=n+\sum(k_i-1)$.
We denote by $x_I:= \sum_{i\in I} x_i$.
Our projection uses the sets 
\( \widetilde{M}_1, \dots, \widetilde{M}_n \), which are defined recursively by setting \( \widetilde{M}_j \) to be the last \( (k_i - 1) \) elements of \( [N] \setminus \bigcup_{i=1}^{j-1} \widetilde{M}_i \), and let \( M_j = \widetilde{M}_j \cup \{ j \} \) for \( j \in [n] \).

\begin{definition}[Projection]
We define the projection 
\begin{align*}
    \pi_1: \mathbb{R}^{N} \longrightarrow \mathbb{R}^{n}, (x_1,...,x_{N})\mapsto (x_{M_1},...,x_{M_{n}})\in \mathbb{R}^{n} . 
\end{align*}
Next, let
\begin{align*}
    \pi_2: \mathbb{R}^{n} \rightarrow \mathbb{R}^{n-1},(x_{M_1},...,x_{M_{n}}) \mapsto (x_{M_1},x_{M_1}+x_{M_2},...,x_{M_1}+...+x_{M_{n-1}}).
\end{align*}
Finally, we define $\pi:\mathbb{R}^{N} \rightarrow \mathbb{R}^{n-1}$ as $\pi=\pi_2 \circ \pi_1$.
\end{definition}

\begin{theorem}\label{T1}
    Let $\nu=(NE^{k_n}) \cdot \cdot \cdot (NE^{k_1})$ with $k_i\geq 1$ (no consecutive north steps). Then the projection $\pi: \mathbb{R}^N \rightarrow \mathbb{R}^{n-1}$ of the bounded components of $\widetilde{\mathcal{B}} (Q_\nu,w_\nu)$ is a realization of the $\nu$-associahedron of the desired dimension.
\end{theorem}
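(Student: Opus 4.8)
The plan is to leverage Theorem~\ref{main_theorem}: its proof, together with Theorem~\ref{thm_main_bounded_faces}, already identifies the complex $\mathcal{R}$ of bounded faces of $\widetilde{\mathcal{B}}(Q_\nu,w_\nu)\subseteq\mathbb{R}^N$ with a geometric realization of the $\nu$-associahedron, each bounded cell being $\mathcal{B}^I(Q_\nu,w_\nu)=\operatorname{conv}\{\widetilde b(J)\mid I\subseteq J\}$ for $I=T\setminus A$. Since $\pi$ is linear, $\pi(\mathcal{B}^I(Q_\nu,w_\nu))=\operatorname{conv}\{\pi(\widetilde b(J))\mid I\subseteq J\}$, so it suffices to prove that $\pi$ restricts to a combinatorial isomorphism from $\mathcal{R}$ onto its image and that the image is $(n-1)$-dimensional. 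Because $\pi$ is affine and $|\mathcal{R}|$ is a finite union of bounded polytopes, hence compact, it is enough to show that $\pi$ is injective on the underlying set $|\mathcal{R}|$: an injective affine map cannot lower the dimension of any cell's affine hull nor create spurious overlaps, so it is automatically a cellular homeomorphism onto its image preserving the face poset.

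First I would prove that $\pi$ does not collapse any individual cell. By Proposition~\ref{faceprop} and Lemma~\ref{bounded}, a bounded cell $F=\mathcal{B}^I(Q_\nu,w_\nu)$ has minimal vertex $T=J_{F,\min}$ and $V_F=\operatorname{span}\{r(T,a)\mid a\in A\}$, where the roots $r(T,a)=e_{i_a}-e_{j_a}$ are linearly independent by Lemma~\ref{proofA1}\eqref{L1}. Writing $a(i)$ for the index of the block $M_{a(i)}$ containing the coordinate $i$, the map $\pi_1$ sends $e_i-e_j$ to $e_{a(i)}-e_{a(j)}\in\mathbb{R}^n$. The combinatorial heart of the argument, and the only place the hypothesis of \emph{no two consecutive north steps} is used, is the following Block Lemma: for every node $t$ of every $\nu$-tree the two pipes meeting at $t$ lie in distinct blocks, so $a(i_t)\neq a(j_t)$, and moreover the edges $\{\,a(i_a)\,a(j_a)\mid a\in A\,\}$ form a forest on the vertex set $[n]$. (With consecutive north steps both pipes of a node could be forced into one block, collapsing a root.) Granting this, $\pi_1$ sends the independent family $\{r(T,a)\}$ to an independent family of difference vectors, so $\pi_1|_{V_F}$ is injective; and since every $e_a-e_b$ lies in the hyperplane $\{\sum_c y_c=0\}$ while $\ker\pi_2=\mathbb{R}\,e_n$ meets this hyperplane only in $0$, the map $\pi_2$ is injective on $\pi_1(V_F)$. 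Hence $\pi|_{V_F}$ is injective and $F$ maps isomorphically onto a cell of the same dimension; in particular the top cells, where $|A|=n-1$, map onto $(n-1)$-dimensional polytopes, giving $\dim\pi(\mathcal{R})=n-1$.

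It then remains to upgrade per-cell injectivity to global injectivity on $|\mathcal{R}|$, equivalently to rule out that two distinct cells overlap after projection. Here I would compute $\pi(\widetilde b(T))$ explicitly: the $j$-th coordinate $\sum_{i\in M_j}\widetilde b(T)_i$ of $\pi_1\widetilde b(T)$ counts, up to sign and by Corollary~\ref{brickvectorofnutree}, the lattice points of $F_\nu$ lying below the bundle of pipes indexed by $M_j$, and after the partial sums defining $\pi_2$ this yields the clean closed-form vertex description advertised in the introduction. Using this formula I would show that $T\mapsto\pi(\widetilde b(T))$ is injective and that the projected cells meet only along projected common faces, either by exhibiting, for each pair of incomparable interior faces, a linear functional separating the corresponding projected cells in a manner consistent with the $\nu$-Tamari order, or---most cleanly---by checking that these coordinates coincide, up to an affine transformation, with the tropical realization of~\cite{CPS19}; since that realization is a bona fide geometric realization and the face poset is already pinned down by Theorem~\ref{main_theorem}, agreement on vertices forces agreement of the whole complex.

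I expect the genuine difficulty to lie in this last step, the global injectivity and proper-intersection statement, rather than in the local non-collapse of cells: once the Block Lemma is in place, per-cell injectivity and the dimension count follow mechanically, but controlling how the many bent cells of $\mathcal{R}\subseteq\mathbb{R}^N$ flatten into $\mathbb{R}^{n-1}$ without overlap genuinely requires the explicit coordinates. The role of the no-consecutive-north-steps hypothesis should be isolated entirely in the Block Lemma, which also explains why an analogous clean projection is not expected for general~$\nu$.
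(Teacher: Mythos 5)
Your overall plan is sound, and your ``most cleanly'' option in the last step is in fact exactly the route the paper takes: it reduces Theorem~\ref{T1} to an explicit vertex-coordinate formula (Theorem~\ref{T2}) and concludes by matching those coordinates with a realization already known to be correct. But this is precisely the step you leave as a promissory note, and it is where all the content of the theorem lives. The paper proves the formula $y_i(T)=\operatorname{area}(P_i(T))$ by a local analysis of a single rotation $T\to T'=T\setminus\{b\}\cup\{b'\}$: by Corollary~\ref{brickvectorofnutree} the rotation changes exactly two entries of $b(T)$, namely $x_i\mapsto x_i+\operatorname{area}_{i,j}$ and $x_j\mapsto x_j-\operatorname{area}_{i,j}$, and the key combinatorial observation (this is where the no-consecutive-north-steps hypothesis and the definition of the blocks $M_\ell$ actually enter) is that $j\in M_{k+1}$, so that the partial sums $y_\ell=x_{M_1}+\dots+x_{M_\ell}$ change by $\operatorname{area}_{i,j}$ precisely for $i\le\ell\le k$ --- the same increments as the canonical area-coordinates. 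Connectivity of the flip graph then gives the formula globally. Without carrying out this computation (or an equivalent one), your argument does not establish global injectivity of $\pi$ on $|\mathcal{R}|$, which you yourself identify as the crux. A secondary point: the realization you should match against is the canonical realization of~\cite{Ceb24}, whose vertices are literally the area vectors; the tropical realizations of~\cite{CPS19} form a parametric family and it is not clear (nor needed) that the projected coordinates agree with any of them up to an affine map.

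The first two paragraphs of your proposal --- the Block Lemma and per-cell injectivity of $\pi$ on each $V_F$ --- are a genuinely different ingredient not present in the paper, but they are both unproven as stated and redundant: once the projected vertex coordinates are shown to coincide (up to translation) with a known polytopal-complex realization whose face poset is already pinned down by Theorem~\ref{main_theorem}, non-collapse of cells, correct dimension, and proper intersection of projected cells all follow at once, since each bounded cell is the convex hull of its vertices by Theorem~\ref{thm_main_bounded_faces}. If you want to keep the Block Lemma as an independent structural statement, it needs its own proof; as written it carries the same epistemic weight as the claim it is meant to support.
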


Moreover, the coordinates of the projected vertices can be simply described as follows. Let $T_0$ be the minimal $\nu$-tree of the $\nu$-Tamari lattice. We denote by 
\[
y(T)=(y_1,\dots,y_{n-1}) := 
\pi (\widetilde{b}(T) ) - \pi (\widetilde{b}(T_0) ).
\]
We can think of these y-coordinates as the result of translating the projection of the bounded components of the $\nu$-brick polyhedron by the constant vector $- \pi (\widetilde{b}(T_0) )$. 
In particular, $y(T_0)=(0,\dots,0)$.

It turns out that these new coordinates can be described in a very simple and elegant combinatorial way. 
First, we label the horizontal lines of the Ferrers diagram determined by the path $\nu$ from $1$ to $n-1$, from top to bottom, omitting the top most row. We denote by~$P_i=P_i(T)$ the unique shortest path connecting the root to the left most node of $T$ on the $i$-th horizontal line, and let $\operatorname{area}( P_i )$ be the number of boxes left to $P_i$. Two examples are illustrated in~\Cref{canonical}. 
Magically, $y_i=\operatorname{area}(P_i)$.

\begin{figure}[h!]
    \centering
    \includegraphics[width=0.75\textwidth]{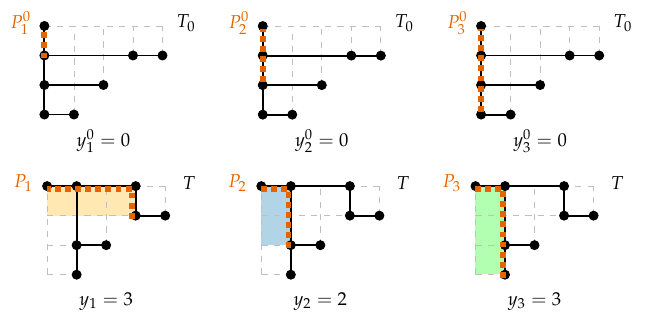} 
    \caption{The canonical coordinates $y(T_0)=(y_1^{0},y_2^{0},y_3^{0})=(0,0,0)$, and $y(T)=(y_1,y_2,y_3)=(3,2,3)$ for the two $\nu$-trees $T_0$ and $T$. The entry~$y_i(T)$ is the area (i.e. number of boxes to the left) of the path $P_i(T)$ connecting the root to the leftmost node of $T$ at level~$i$ (increasing from top to bottom).}    \label{canonical}
\end{figure}

\begin{theorem}\label{T2}
    Let $\nu=(NE^{k_n}) \cdot \cdot \cdot (NE^{k_1})$ with $k_i\geq 1$ (no consecutive north steps).    For a $\nu$-tree $T$ we have $y(T) =(y_1,...,y_{n-1})$, where $y_i = \operatorname{area} (P_i(T))$.
    These coordinates determine a realization of the $\nu$-associahedron. 
\end{theorem}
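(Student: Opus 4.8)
The statement packages two claims: the explicit coordinate formula $y_i = \operatorname{area}(P_i(T))$, and the assertion that the resulting point configuration realizes the $\nu$-associahedron. The second claim is essentially free and I would dispatch it first. By definition $y(T) = \pi(\widetilde{b}(T)) - \pi(\widetilde{b}(T_0))$ differs from $\pi(\widetilde{b}(T))$ by the single global constant vector $-\pi(\widetilde{b}(T_0))$, so the family $\{y(T)\}$ is a translate of $\{\pi(\widetilde{b}(T))\}$. Since a translation is an affine isomorphism that preserves every face and incidence relation, the \emph{realization} claim follows immediately from Theorem~\ref{T1}. Hence the whole mathematical content of the theorem is the combinatorial identity for the individual coordinates.

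To prove the identity I would first unwind the projection. Writing $S_i := M_1 \cup \cdots \cup M_i$, the block structure of $\pi_1$ together with the partial sums defining $\pi_2$ give
\[
\pi(\widetilde{b}(T))_i = \sum_{j=1}^{i} \widetilde{b}(T)_{M_j} = \sum_{k \in S_i} \widetilde{b}(T)_k .
\]
By Corollary~\ref{brickvectorofnutree}, each entry $\widetilde{b}(T)_k$ equals minus the number of lattice points of $F_\nu$ lying below pipe $k$ in the pipe dream $P(T)$; write $c_k(T)$ for this box count. Subtracting the value at $T_0$ then yields
\[
y_i(T) = \sum_{k \in S_i} \big( c_k(T_0) - c_k(T) \big),
\]
and the plan is to show that this signed count of boxes equals $\operatorname{area}(P_i(T))$.

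The crux is a geometric identification of the index set $S_i$. I would show that, under the recursive ``last $(k_i-1)$ elements'' definition of the $\widetilde{M}_j$, the union $S_i$ is exactly the set of pipes forced to pass above the leftmost node of $T$ on the $i$-th horizontal line of $F_\nu$; here the no-consecutive-north-steps hypothesis is what guarantees that each row contributes a single ``new'' pipe to the partition blocks. Granting this, the difference $\sum_{k\in S_i}(c_k(T_0)-c_k(T))$ reorganizes box by box: a box contributes precisely when the number of $S_i$-pipes above it changes between $T_0$ and $T$, and one checks that the boxes with net contribution are exactly those lying to the left of the path $P_i(T)$, each counted once. I expect this to go through most transparently by induction along the cover relations of the $\nu$-Tamari lattice. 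The base case is $T=T_0$, where every leftmost node sits on the left boundary so $\operatorname{area}(P_i(T_0))=0=y_i(T_0)$; for an increasing flip $T \to T'$ one has $b(T')-b(T)=c\,r(T,q)$ with $c>0$ for the flipped node $q$, and the step is to track simultaneously how this root, pushed through $\pi$, modifies $y_i$ and how the flip moves the leftmost node on each affected level, modifying $\operatorname{area}(P_i)$ by the matching amount.

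The main obstacle is exactly this last level-by-level matching: pinning down which indices $i$ see their leftmost node, and hence $P_i$, move under a single rotation, and verifying that the induced change in the projected brick vector agrees with the change in $\operatorname{area}(P_i)$ at every such level. This is the only place where the specific grouping encoded by the sets $M_j$ is genuinely used, and the delicate point is ensuring that the telescoping of box counts localizes exactly to the region left of $P_i(T)$ rather than spilling into neighbouring levels; getting this bookkeeping right is where essentially all the care of the proof is concentrated.
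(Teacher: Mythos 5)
Your overall strategy---interpret the projected brick vector as partial sums of box counts via Corollary~\ref{brickvectorofnutree}, anchor the identity at $T_0$ where both sides vanish, and induct along increasing flips by matching the change of $y_\ell$ against the change of $\operatorname{area}(P_\ell)$---is the same as the paper's. Two points, however, are genuine gaps. First, the realization claim: you derive it from Theorem~\ref{T1}, but in the paper the logical arrow points the other way. The paper proves only the coordinate identity and then obtains Theorem~\ref{T1} as a consequence, because the coordinates $\operatorname{area}(P_i(T))$ are precisely the canonical realization of the $\nu$-associahedron from~\cite{Ceb24}, which is already known to be a realization. As written, your dispatch of the realization claim is circular within this paper; you should instead invoke~\cite{Ceb24} once the coordinate formula is established (or argue separately that $\pi$ is injective and affine on the bounded complex of Theorem~\ref{main_theorem}).

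Second, the step you yourself flag as ``the main obstacle'' is where all the content lives, and the route you sketch toward it---a global characterization of $S_i=M_1\cup\dots\cup M_i$ as ``the pipes forced to pass above the leftmost node on level $i$''---is both unproved and harder than necessary; note that $S_i$ is defined independently of $T$, so any such characterization would have to be uniform over all $\nu$-trees. The paper's resolution is purely local. For a single rotation at a node $b$ meeting pipes $i$ and $j$, only the box counts of those two pipes change, by $+\operatorname{area}_{i,j}$ and $-\operatorname{area}_{i,j}$ respectively; since $i\in M_i$ by definition and---this is the one nontrivial verification---$j\in M_{k+1}$, where $k$ is the lowest horizontal level of a descendant of $b$, the partial sums $y_\ell$ change by $\operatorname{area}_{i,j}$ exactly for $i\le\ell\le k$ and are unchanged otherwise (the contributions of pipes $i$ and $j$ cancel for $\ell\ge k+1$). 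This matches level by level the known change of the canonical coordinates $\operatorname{area}(P_\ell)$ under a rotation. Without the identification $j\in M_{k+1}$, or an equivalent statement, the telescoping does not localize to the levels between $i$ and $k$ and your induction step does not close.
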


\begin{remark}
    As a consequence, we obtain that the projection $\pi: \mathbb{R}^N \rightarrow \mathbb{R}^{n-1}$ of the bounded components of $\widetilde{\mathcal{B}} (Q_\nu,w_\nu)$ is a translation of the canonical realization of the $\nu$-associahedron described by the first author in~\cite{Ceb24}. This may be regarded as a Loday-like realization of the $\nu$-associahedron, because in the classical case $\nu=(NE)^n$ both realizations are affinely equivalent. 
\end{remark}

\begin{example}
We continue Example \ref{exENEENasso} and Example \ref{exENEENbp}, but add a north step~$N$ at the beginning and an east step $E$ at the end of the path for convenience, obtaining $\nu=NENEENE$ (instead of $\nu=ENEEN$). The seven $\nu$-trees are shown in~\Cref{fig_ENEENproj}.

    \begin{figure}[!h]
    \centering
    \includegraphics[width=0.6\textwidth]{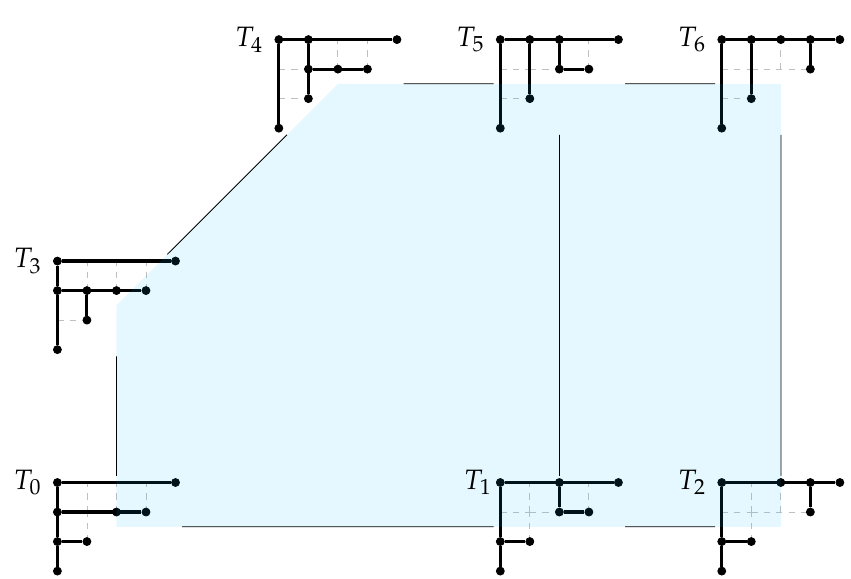} 
    \caption{Projection of the bounded components of $\widetilde{\mathcal{B}}(Q_\nu, w_\nu)$ for $\nu=NENEENE$.}
    \label{fig_ENEENproj}
    \end{figure}
\end{example}
The brick vectors are:
\begin{align*}
    b(T_0) & = -(12,11,8,7,2,0) & b(T_4) & = -(12,10,7,9,2,0) \\
    b(T_1) & = -(12,9,10,7,2,0) & b(T_5) & = -(12,9,8,9,2,0) \\
    b(T_2) & = -(12,8,10,7,3,0) & b(T_6) & = -(12,8,8,9,3,0)  \\
    b(T_3) & = -(12,11,7,8,2,0)
\end{align*}

Note that these are just translations of the brick vectors we computed in Example~\ref{exENEENbp} (because of the extra $N$ and $E$). The vectors $\widetilde{b}(T_i)\in \mathbb{R}^4$ are obtained by removing the first and last coordinates of $b(T_i)$. Furthermore, we have $M_1=\{1\}$, $M_2=\{2,4\}$ and $M_3=\{3\}$.  
The projected points are then:
\begin{align*}
    \pi (\widetilde{b}(T_0))  & = -(11,21) & \pi (\widetilde{b}(T_4)) & = -(10,19) \\
    \pi (\widetilde{b}(T_1)) & = -(9,21) & \pi (\widetilde{b}(T_5)) & = -(9,19) \\
    \pi (\widetilde{b}(T_2)) & = -(8,21) & \pi (\widetilde{b}(T_6)) & = -(8,19)  \\
    \pi (\widetilde{b}(T_3)) & = -(11,20)
\end{align*}

These coordinates coincide with the vertex coordinates of the canonical realization by \cite{Ceb24}, up to a translation by $\pi (\widetilde{b}(T_0) )=-(11,21)$:
\begin{align*}
    y(T_0) & = (0,0) & y(T_4) & = (1,2) \\
    y(T_1) & = (2,0) & y(T_5) & = (2,2) \\
    y(T_2) & = (3,0) & y(T_6) & = (3,2)  \\
    y(T_3) & = (0,1)
\end{align*}

The resulting projection is illustrated in Figure \ref{fig_ENEENproj}.

\begin{example}
For $\nu=NENENEENE=(NE^1)(NE^1)(NE^2)(NE^1)$, we obtain $N=n+\sum (k_i-1)=4+(0+1+0+0)=5$. 
Furthermore, $M_1=\{1\}$, $M_2=\{2,5\}$, $M_3=\{3\}$, and $M_4=\{4\}$. So we group coordinates $2$ and $5$ of $\widetilde{b}(T)$ together. 
In order to illustrate how the projection works, let us consider the two $\nu$-trees $T$ and~$T_0$ in Figure~\ref{canonical}.
We obtain:

\begin{align*}
    b(T) & = -(17, 13, 13, 9, 13, 2, 0) & b(T_0) & = -(17, 16, 12, 10, 10, 2, 0) \\
    \widetilde{b}(T) & = -(13, 13, 9, 13, 2) & \widetilde{b}(T_0) & = -(16, 12, 10, 10, 2) \\
    \pi_1(\widetilde{b}(T)) & = -(13, 15, 9, 13) & \pi_1(\widetilde{b}(T_0)) & = -(16, 14, 10, 10)  \\
    \pi(\widetilde{b}(T)) & = -(13, 28, 37) & \pi(\widetilde{b}(T_0)) & = -(16, 30, 40) \\
\end{align*}

The difference between the corresponding projected brick vectors is 
\[
y(T)=(y_1,y_2,y_3)=\pi(\widetilde{b}(T)) - \pi(\widetilde{b}(T_0)) = (3,2,3). 
\] 
As we can see from Figure~\ref{canonical}, the entry $y_i=\operatorname{area}(P_i)$ counts the number of boxes left to the path $P_i$ connecting the root to the left most node of $T$ at level $i$.

Although the $\nu$-brick polyhedron ${\mathcal{B}} (Q_\nu,w_\nu)\subseteq \mathbb R^7$, the projection of its bounded components lies in $\mathbb R^3$ and is illustrated in Figure~\ref{fig_ENENEENproj}.
    \begin{figure}[h]
    \centering
    \includegraphics[width=0.37\textwidth]{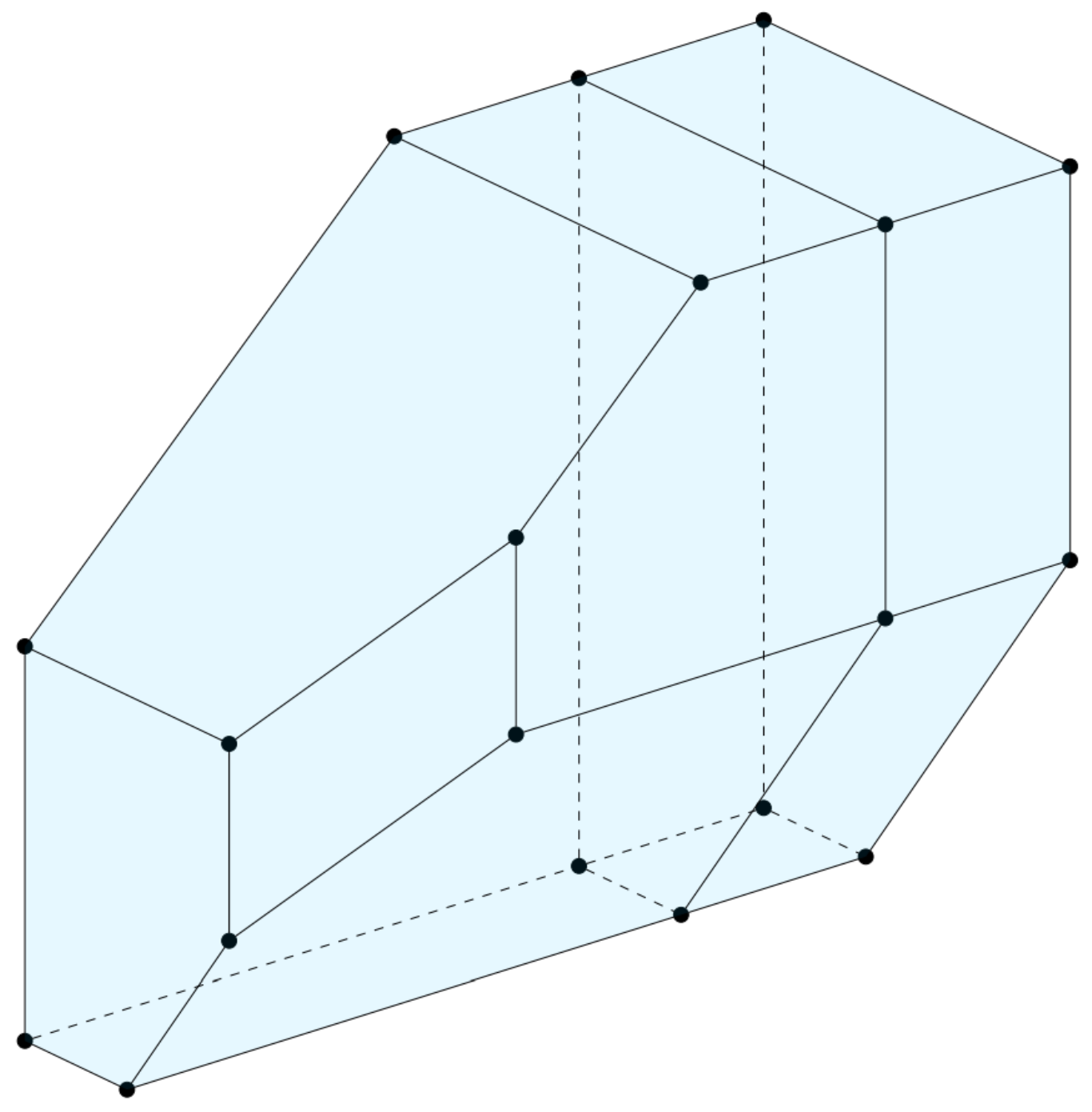} 
    \includegraphics[width=0.4\textwidth]{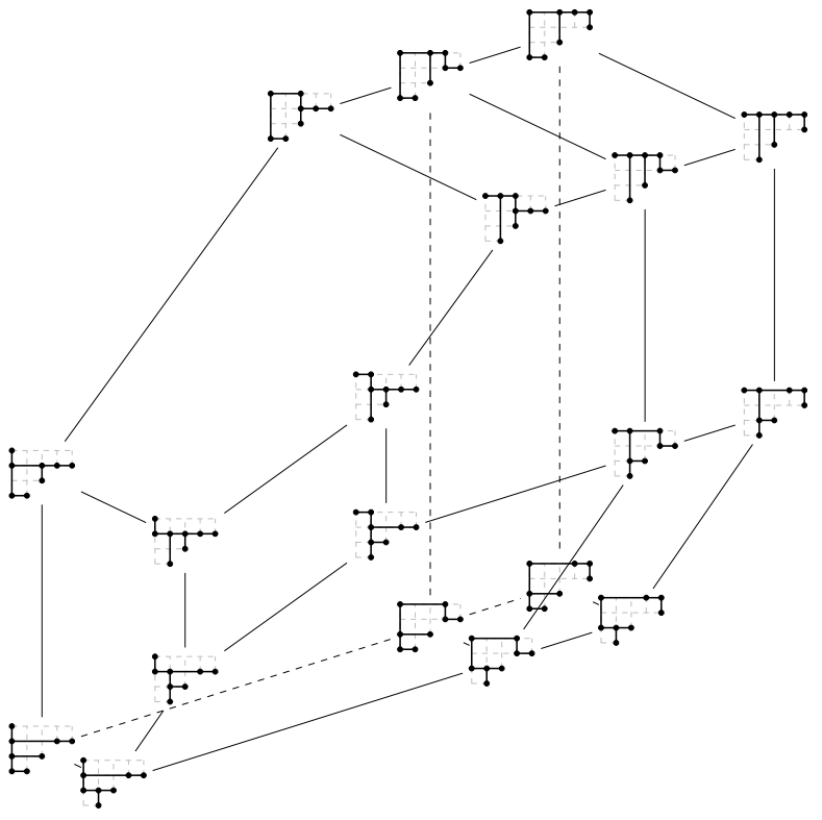}
    \caption{Left: Projection of the bounded components of $\widetilde{\mathcal{B}}(Q_\nu, w_\nu)$, Right: $\nu$-associahedron.}
    \label{fig_ENENEENproj}
    \end{figure}
\end{example}

\begin{example}
    Figure~\ref{fig_nu_associahedra} shows the projections of $\nu$-brick polyhedra to the canonical realization of $\nu$-associahedra for three different lattice paths: 
    \begin{itemize}
        \item $\nu= (NE)(NE)(NEE)(NE)$
        \item $\nu= (NE^2)^4$ (Fuss-Catalan case)
        \item $\nu= (NE^3)^4$ (Fuss-Catalan case)
    \end{itemize}
    Animations of these figures can be found in~\cite{matthiasWebsite}.
\end{example}

\begin{figure}[h]
    \centering
    \includegraphics[width=0.8\textwidth]{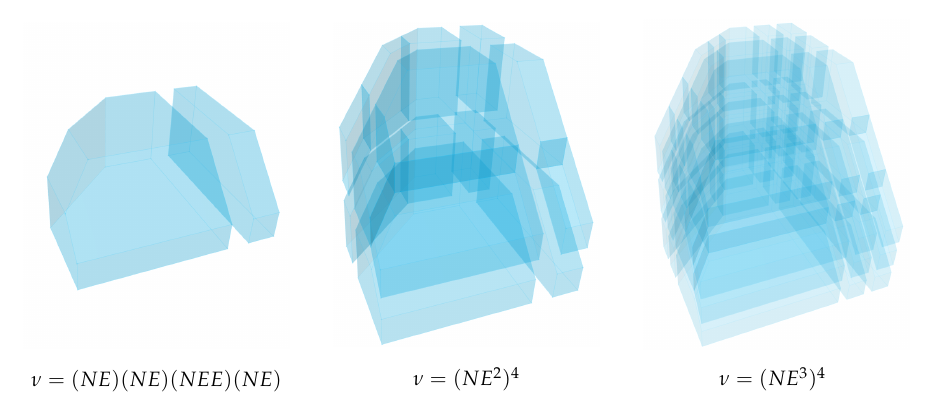} 
    \caption{Some $\nu$-associahedra obtained as projections of $\nu$-brick polyhedra.}
    \label{fig_nu_associahedra}
\end{figure}

\subsection{Proof of Theorems \ref{T1} and \ref{T2}}
Since Theorem \ref{T2} provides the explicit coordinates, which, up to translation, coincide with the known canonical realization of the $\nu$-associahedron in~\cite{Ceb24}, it is sufficient to prove Theorem  \ref{T2}.

\begin{proof}[Proof of \Cref{T2}]
Observe that the dimension of the maximal polytopal part and the projected points align precisely. Let $T$ be a $\nu$-tree and $T'=T\setminus \{b\} \cup \{b'\}$ be a rotation as in Figure \ref{proofspecialcase}.

\begin{figure}[h]
  \centering
  \begin{tabular}{cc}

    \begin{tikzpicture}[scale=1]

    \fill [gray!20] (2,-3)--(2,-2)--(3,-2)--(2,-3);

    
    \draw [color=black, dotted] (0,-3)--(2,-3);
    \draw (1,-1) node [scale=0.3, circle, draw,fill=black,anchor=center]{};

    \draw (1,-2) node [scale=0.3, circle, draw,fill=black,anchor=center]{};
    \draw (2,-2) node [scale=0.3, circle, draw,fill=black,anchor=center]{};

    \node[orange, scale=0.8] at (-0.2,-2) {$i$};
    \node[red, scale=0.8] at (-0.2,-4) {$j$};
    \node[black, scale=0.8] at (-0.2,-3) {$k$};

    \node[black, scale=1, right=5pt] at (0.75,-1.55) {$\text{area}_{i,j}$};
    \node[black, scale=1, left=5pt] at (3.1,-2.5) {$A_1$};
    \node[black, scale=1, left=5pt] at (2.5,-4) {$T$};
\draw[orange, thick] (0,-2) -- (0.8,-2) arc (-90:0:0.2) -- (1,-1.2) arc (180:90:0.2) -- (3,-1) arc (-90:0:0.2) -- (3.2,0);
\draw[red, thick] (0,-4) -- (0.8,-4) arc (-90:0:0.2) -- (1,-2.2) arc (180:90:0.2) -- (1.8,-2) arc (-90:0:0.2) -- (2,0);
    \node[black, scale=1, left=5pt] at (1,-1) {$a$};
    \node[black, scale=1, left=5pt] at (1,-2) {$b$};
    \node[black, scale=1, right=5pt] at (2,-2) {$c$};
\end{tikzpicture}

&

        \begin{tikzpicture}[scale=1]

    \fill [gray!20] (2,-3)--(2,-2)--(3,-2)--(2,-3);

    
    \draw [color=black, dotted] (0,-3)--(2,-3);
    \draw (1,-1) node [scale=0.3, circle, draw,fill=black,anchor=center]{};

    \draw (2,-1) node [scale=0.3, circle, draw,fill=black,anchor=center]{};
    \draw (2,-2) node [scale=0.3, circle, draw,fill=black,anchor=center]{};

    \node[orange, scale=0.8] at (-0.2,-2) {$i$};
    \node[red, scale=0.8] at (-0.2,-4) {$j$};
    \node[black, scale=0.8] at (-0.2,-3) {$k$};
    \node[black, scale=1, left=5pt] at (1,-1) {$a$};
    \node[black, scale=1, left=5pt] at (2.7,-0.7) {$b'$};
    \node[black, scale=1, right=5pt] at (2,-2) {$c$};
    \node[black, scale=1, right=5pt] at (0.75,-1.55) {$\text{area}_{i,j}$};
    \node[black, scale=1, left=5pt] at (3.1,-2.5) {$A_1$};
    \node[black, scale=1, left=5pt] at (2.5,-4) {$T'$};
\draw[orange, thick] (0,-2) -- (1.8,-2) arc (-90:0:0.2) -- (2,-1.2) arc (180:90:0.2) -- (3,-1) arc (-90:0:0.2) -- (3.2,0);
\draw[red, thick] (0,-4) -- (0.8,-4) arc (-90:0:0.2) -- (1,-1.2) arc (180:90:0.2) -- (1.8,-1) arc (-90:0:0.2) -- (2,0);
\end{tikzpicture}
\end{tabular}
\caption{Structure of $\nu$-trees $T$ and $T'$.}
\label{proofspecialcase}
\end{figure}
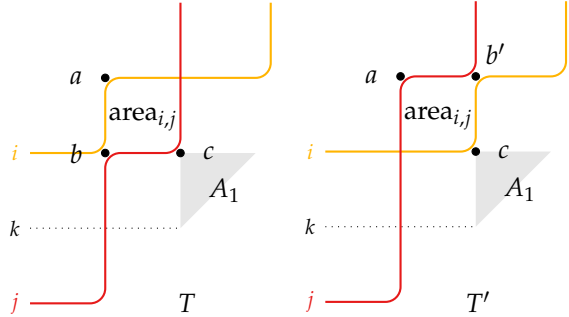

Let $c(T)=(c_1,...,c_d)$ and $c(T')=(c'_1,...,c'_d)$ be the coordinates of the $\nu$-trees $T$ and $T'$ in the canonical realization. We need to show
\begin{align*}
c(T)-c(T') = \pi (b(T')) - \pi(b(T)).
\end{align*}

Consider the pipe dream representation of the $\nu$-trees $T$ and $T'$.
The translation vector from the canonical realization to the projection is then $\pi(b(T_0))$, where $T_0$ is the minimal $\nu$-tree. Suppose that node $b\in T$ touches pipes $i$ and~$j$, and let~$k$ be the lower horizontal level of a descendant node of $b$ in the tree $T$, see Figure~\ref{proofspecialcase}. Denoting the number of boxes covered by a rotation as $\text{area}_{i,j}$ we obtain the following in the canonical realization:
\begin{align*}
c'_\ell= \begin{cases} 
c_\ell +\text{area}_{i,j} & \text{ for $i \leq \ell \leq k$ } \\
c_\ell & \text{otherwise}
\end{cases}.
\end{align*}
We need to show that this formula holds for the projected points. Let 
\begin{align*}
y = (y_1,...,y_d) = \pi(b(T))\text{, }y' = (y_{1'},...,y_{d'}) = \pi(b(T'))\text{, }\\
x = (x_1,...,x_{N-2}) = b(T)\text{, }x' = (x_{1'},...,x_{N-2}') = b(T').
\end{align*}
Then $x_i'=x_i+\text{area}_{i,j}$, $x_j'=x_j-\text{area}_{i,j}$ and $x'_\ell = x_\ell$ otherwise. This is because, by Corollary \ref{brickvectorofnutree}, $x_i$ is the negative of the number of lattice points in the Ferrers diagram that are below pipe $i$. When we make the rotation, this number of points is reduced by $\text{area}_{i,j}$. Similarly, the number of points below pipe $j$ increases by $\text{area}_{i,j}$. The number of points below the other pipes remains constant.

Now it is not hard to see that $j\in M_{k+1}$. If there is a node below $b$ in $T$ then~$j=k+1 \in M_{k+1}$, if not then $j$ is the diagonal level plus $1$ of the point $s\in S$ that is in the $k$-th horizontal line, so $j \in M_{k+1}$. Therefore, since
\begin{align*}
y_\ell=(x_{M_1}+...+x_{M_\ell})\text{ and }y_\ell'=(x'_{M_1}+...+x'_{M_\ell})
\end{align*}
then 
\begin{align*}
y'_\ell= \begin{cases} 
y_\ell +\text{area}_{i,j} & \text{ for $i \leq \ell \leq k$ } \\
y_\ell & \text{otherwise}
\end{cases}.
\end{align*}
This finishes the proof.
\end{proof}

\section*{Acknowledgement}
This work was funded by the Austrian Science Fund (FWF), project numbers  10.55776/P33278 and 10.55776/I5788.

\bibliographystyle{alpha}
\bibliography{sn-bibliography}  


\end{document}